\documentclass[11pt]{article}
\usepackage{amssymb}

\usepackage{graphicx}
\usepackage{amsmath}
\usepackage{amsfonts}
\usepackage{amsopn}
\usepackage{amsthm}
\usepackage{latexsym}
\usepackage{ulem}
\usepackage{xcolor}
\usepackage{mathtools}

\usepackage{caption}
\usepackage{subcaption}
\usepackage{float}

\usepackage{enumerate}

\usepackage{bbm}




      \topmargin 0cm
      \textheight 20cm
      \textwidth 17cm
      \oddsidemargin 0cm

\newcounter{remark}[section]

\newcommand\mt{m_{\rm tip}}

%
%
%
%
%
%
\usepackage[subnum]{cases}

\newtheorem{theorem}{Theorem}
\newtheorem{lemma}{Lemma}[section]
\newtheorem{definition}{Definition}[section]

\newcommand{\bN}{\mathbb{N}}
\newcommand{\bR}{\mathbb{R}}

\title{Slow Passage through a Saddle-Node Bifurcation in Discrete Dynamical Systems}
\author{Jay Chu \footnote{
		Department of Mathematics, National Tsing Hua University, 
		No.~101, Sec. 2, Kuang-Fu Road, Hsinchu 300, Taiwan.
		 (Email address: ccchu@math.nthu.edu.tw).} \&
Jun-Jie Lin \footnote{
		Department of Mathematics, National Tsing Hua University, 
		No.~101, Sec. 2, Kuang-Fu Road, Hsinchu 300, Taiwan.
		 (Email address: s104021601@m104.nthu.edu.tw).} \&
          Je-Chiang Tsai \footnote{
	 	Department of Mathematics, National Tsing Hua University, 
		No.~101, Sec. 2, Kuang-Fu Road, Hsinchu 300, Taiwan, 
		National Center for Theoretical Sciences, 
		No.1, Sec. 4, Roosevelt Road, Taipei 106, Taiwan. 
		(Email address: tsaijc.math@gmail.com). }
}

\begin{document}
\maketitle

\begin{abstract}
We study a discrete non-autonomous  system 
whose autonomous counterpart (with the frozen bifurcation parameter) admits a saddle-node bifurcation, and in which the bifurcation parameter slowly changes in time
and is characterized by a sweep rate constant $\epsilon$.
The discrete system is more appropriate for modeling realistic systems since only time series data is available. 
We show that 
in contrast to its autonomous counterpart, 
when the time mesh size $\Delta t$ is less than the order $O(\epsilon)$,
there is a bifurcation delay as the bifurcation time-varying parameter is varied through the bifurcation point, 
and the delay is proportional to the two-thirds power of the sweep rate constant $\epsilon$.
This bifurcation delay is significant in various realistic systems since it allows one to take necessary action promptly before a  sudden collapse or shift to different states.
On the other hand, when the time mesh size $\Delta t$ is larger than the order $o(\epsilon)$, the dynamical behavior of the solution is dramatically changed before the bifurcation point. This behavior is not observed in the autonomous counterpart. 
Therefore, the dynamical behavior of the system strongly depends on the time mesh size.
Finally. due to the very discrete feature of the system, there are no efficient tools for the analytical study of the system.
Our approach is elementary and analytical. 
\end{abstract}


\section{Introduction}

It is widely known that many realistic systems undergo a critical transition, meaning that
the qualitative behavior of the system dramatically changes 
as some time-changing conditions move the system toward and cross a critical threshold termed a tipping point~\cite{Bury:2021, van:2016, Gladwell:2000}.
Examples include financial crisis, population collapse, and climate change~\cite{Ivashina:2010, Dai:2012, Lenton:2008}.
The significant qualitative change in system behavior initiated by critical transitions may cause severe damage to economics, environmental resources, and public health if no suitable responses are taken before the tipping point~\cite{van:2016}.
Therefore, it is necessary to understand how and when critical transitions occur. 



Although there is no well-accepted rigorous definitions of critical transitions and tipping points~\cite{van:2016},
we treat a critical transition and a tipping point as being synonymous with a bifurcation and a bifurcation point, respectively. 
From this point of view, a system undergoing critical transitions can be modeled by an autonomous dynamical system with a bifurcation.
Estimating a bifurcation point in a real-world system is crucial since it allows one to take necessary actions before its collapse or its sudden shift to a different state.
A bifurcation occurs when a system parameter varies through a critical value giving rise to a dramatical topological change of its dynamical behavior,
and the critical value corresponds to a tipping point in the phenomenon of critical transitions~\cite{Grziwotz:2023}.
The typical (one-dimensional) bifurcations are saddle-node (fold), transcritical, and pitchfork types~\cite{Strogatz, Perko:2006}. 
Due to the presence of symmetry in the transcritical and pitchfork bifurcations, these two types of bifurcations are not structurally stable~\cite{Berglund:2006},
and so are not easily to be observed in realistic systems. 
On the other hand, the saddle-node bifurcation provided a good understanding of a number of biological processes, 
as in the examples of the gene state switching in {\it E. coli}~\cite{Ge:2015} and population models~\cite{Malchow08}. 

In a realistic system, the controlled/bifurcation parameter may change as time evolves, 
for instance, the laser with a saturable absorber~\cite{Arimondo:1987} and the nutrient inflow of a lake~\cite{Ibelings:2007}.
In fact, there is a continuing interest in a class of nonautonomous dynamical systems 
where the controlled parameter is not fixed but changes slowly in time~\cite{Mandel:1987, Baer:1989, Jung:1990, Haberman:2000, Diminnie:2000, Li:2019, Premra:2019, Kalyakin:2022}.
Mathematically, for the case of saddle-node bifurcations in continuous dynamical systems, 
this can be illustrated by the following non-autonomous system
\begin{equation} \label{e:intro_temp10}
   \frac{d x}{d\tau} = -x^2 - p(\tau), 
\end{equation}
where $p(\tau) = \epsilon \tau$, and $\epsilon\in(0,1)$ is a fixed constant and termed as a sweep rate parameter by Strogatz~\cite{Strogatz:2021}. 
This immediately arises a question: Can the dynamical behavior of system~(\ref{e:intro_temp10}) around $\tau = 0$
be understood by its autonomous counterpart system~(\ref{e:intro_temp20}) around $p = 0$?
\begin{equation} \label{e:intro_temp20}
   \frac{d x}{d\tau} = -x^2 - p.
\end{equation}
Note that for each fixed $p < 0$, $\sqrt{-p}$ (resp. $-\sqrt{-p}$) is a globally stable (resp. an unstable) equilibrium of system~(\ref{e:intro_temp20}).
Further, as $p$ is increased from negative values, a solution of system~(\ref{e:intro_temp20}) tracks the moving fixed point  
$\sqrt{-p}$, and then goes to negative infinity once the bifurcation parameter $p$ is crossed the bifurcation point $0$ from below.
Intuitively, the solution $x(\tau)$ of system~(\ref{e:intro_temp10}) with initial value $x(\tau_0) > 0$ at $\tau_0 < 0$ would follow the scenario above
as the time $\tau$ is increased through the critical time $\tau=0$.
In contrast to this intuition, when viewed as a function of $p=\epsilon t$, the solution $x$ of system~(\ref{e:intro_temp10}) does not track the moving fixed point  $\sqrt{-p}$, but instead deviates from them by  a quantity of order $O(\epsilon^{1/3})$, up to some $p=p_0$ of order $O(\epsilon^{2/3})$,  and then stays positive and is of order $O(\epsilon^{1/3})$ over a region of width, $O(\epsilon^{2/3})$, centered around $p = 0$, and finally goes to negative infinity after leaving this region (see Berglund and Gentz~\cite{Berglund:2006}). 
Therefore, the transition of the dynamic behavior of system~(\ref{e:intro_temp10}) is delayed by the amount of order 
$O(\epsilon^{2/3})$ when the bifurcation parameter $p$ is varied slowly in the manner $p = \epsilon \tau$.
In other words, the system dynamics respond to the bifurcation scenario with a lag of order 
$O(\epsilon^{2/3})$, which is termed as a slow passage effect~\cite{Mandel:1987, Baer:1989, Jung:1990, Haberman:2000, Diminnie:2000, Li:2019, Premra:2019, Kalyakin:2022}. 

We would like to point out that this bifurcation delay is observed in various realistic systems.
For instance, the two-thirds power of the sweet rate was asymptotically derived and experimentally tested in a bistable semiconductor laser that is periodically switched between its two stable states (see Jung, Gray, and Roy~\cite{Jung:1990}),
and in the exchange of stability between the ordered nematic equilibria and the isotropic branch in the nematic liquid crystals (see Majumdar et al.~\cite{Majumdar:2013}).   
This delay is significant in various fields of science since it allows us to take some manners to reverse system behavior before the occurrence of runaway change. 

In this paper, we would like to study a discrete non-autonomous system with a slowly varying time-dependent bifurcation parameter
whose autonomous counterpart (with the frozen bifurcation parameter) admits a saddle-node bifurcation.
In fact, it is the discrete version of system~(\ref{e:intro_temp10}) using the Euler forward discretization scheme.
The discrete system is more appropriate for modeling realistic systems since only time series data is available~\cite{Grziwotz:2023}. 
Roughly speaking, our results here can be summarized concisely:
\begin{enumerate}
\item 
For the pair $(\epsilon, \Delta t)$
with $\epsilon/\Delta t = O(1)$, the solution of the discrete version of system~(\ref{e:intro_temp10}) exhibits the bifurcation delay, as the solution of (\ref{e:intro_temp10}) did.
\item
For the pair $(\epsilon, \Delta t)$
with $\epsilon/\Delta t = o(1)$, the solution the discrete version of system~(\ref{e:intro_temp10}) may oscillate around the stable manifold ($x=\sqrt{-p}$) and then goes to extinction before the bifurcation point.
\end{enumerate}
Here $O(1)$ is a bounded function of $\epsilon\in(0,1)$,
while $o(1)$ a function of $\epsilon\in(0,1)$ with $o(1)/\epsilon \to 0$ as $\epsilon\to 0^+$.
In particular, the dynamical behavior of the solution for 
case 2 cannot be observed in the autonomous counterpart, i.e., system (\ref{e:intro_temp10}). 
Therefore, in contrast to the continuous cases, the solution behavior depends on the ratio of the time mesh size to the sweet rate.
An important implication of these results is that the bifurcation point associated with system~(\ref{e:intro_temp20}) cannot be early-warning signals for critical transition associated with discrete systems~\cite{Scheffer:2009}. 
Due to the discrete feature of the system, there are no efficient tools, such as fast-slow theory and matching techniques, for the analytical study of the system.
Our approach is elementary and analytical. 

Finally, the remainder of this paper is organized as follows.
In section 2, the model and the main results are stated.
Section 3 and 4 are devoted to the solution behavior with 
$\epsilon/\Delta t = O(1)$ and $\epsilon/\Delta t = o(1)$,
respectively.
An application of our theory is given in Section 5.

\section{The model and main result}
\setcounter{equation}{0}

\subsection{The model}


Discretizing the equation~(\ref{e:intro_temp10}) based on the forward Euler scheme, we obtain the 
following discrete dynamical system for $\{X(m)\}$:

\begin{equation} \label{e:main_temp50}
     X(m+1) = X(m)  - \Delta \tau X(m)^2  - \epsilon\tau_m\Delta \tau, \quad m \in \bN \cup \{0 \}
\end{equation}
where the time mesh size $\Delta \tau$ is a fixed positive constant, and 
$$
   \tau_m = \tau_0 + m\Delta \tau.
$$
We will study the dynamical behavior of Eq.~(\ref{e:main_temp50}).
Using the scaling
\begin{equation}
     x(m) =  X(m), \quad 
     t_m =   \epsilon \tau_m, \quad
     \Delta t = \epsilon  \Delta\tau,
\end{equation}
the equation is converted into the following:
\begin{equation*}
     x(m+1) = x(m) -\frac{\Delta t}{\epsilon}x(m)^2-\frac{\Delta t}{\epsilon}t_m,  \; m \in \bN \cup \{0 \}.
\end{equation*}
Note that the scaling time variable $t_m$ is exactly the slowly varying bifurcation variable $p$. 

Now, consider the initial value problem
\begin{subnumcases} 
{\label{e:main}}
     x(m+1)=-\frac{\Delta t}{\epsilon}x(m)^2+x(m)-\frac{\Delta t}{\epsilon}t_m, \label{e:maina} &\\
     x(0)= x_0, \label{e:mainb} &
\end{subnumcases}
where $x_0 \in \bR$ satisfies
$$
    x_0-\sqrt{-t_0}=\alpha\epsilon
$$
for some $\alpha>0$.

\subsection{Tipping time and tipping point}

In this subsection, we will introduce the terminologies tipping time and tipping point, after which 
the dynamical behavior of the solution will be dramatically changed. 
To see this, we first give two lemmas.
The first one concerns the existence of the solution of problem~(\ref{e:main})
and its property, as stated below. 
The proof is direct and so omitted. 

\begin{lemma}\label{l:tipping}
  Let $\{x(m)\}_{m\in\bN}$ be the solution of problem~(\ref{e:main}).
Then $x(m)$ is defined for $m \in \{0\}\cup\bN$.
Moreover, if $x(m)$ takes on non-positive values at $M\in\bN$ with $t_M \ge 0$,
then $x(m)$ is decreasing for $m \ge M$ and tends to negative infinity as $m \to \infty$.
\end{lemma}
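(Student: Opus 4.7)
The plan is to split the claim into three pieces --- global existence, monotonicity past the index $M$, and divergence to $-\infty$ --- and dispatch each directly from the explicit recursion~(\ref{e:maina}).

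For global existence I would simply observe that the right-hand side of~(\ref{e:maina}) is a polynomial in $x(m)$ and $t_m$, so $x(m+1)$ is well defined for every $x(m)\in\bR$; a one-line induction then produces $x(m)$ for every $m\in\{0\}\cup\bN$. For the remaining two assertions the key step is to rewrite~(\ref{e:maina}) as
\[
x(m+1)-x(m)=-\frac{\Delta t}{\epsilon}\bigl(x(m)^{2}+t_m\bigr).
\]
Because $t_{m+1}=t_m+\Delta t$, the hypothesis $t_M\geq 0$ propagates to $t_m\geq 0$ for every $m\geq M$. Combining this with $x(M)\leq 0$, a simultaneous induction then yields $x(m)\leq 0$ for all $m\geq M$: the quantity $x(m)^{2}+t_m$ is non-negative, so $x(m+1)\leq x(m)\leq 0$ at each step. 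This at once gives that the sequence $\{x(m)\}_{m\geq M}$ is non-increasing; strict decrease holds except in the degenerate configuration $x(M)=t_M=0$, where strict decrease nonetheless kicks in at step $M+2$ because $t_{M+1}=\Delta t>0$.

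For the divergence statement I would argue by contradiction. The sequence $\{x(m)\}_{m\geq M}$ is non-increasing and bounded above by $0$, so if it failed to diverge to $-\infty$ it would converge to some finite limit $L\in(-\infty,0]$, forcing $x(m+1)-x(m)\to 0$. On the other hand $t_m=t_0+m\Delta t\to +\infty$, so the displayed identity forces
\[
x(m+1)-x(m)=-\frac{\Delta t}{\epsilon}\bigl(x(m)^{2}+t_m\bigr)\to -\infty,
\]
a contradiction. Hence $x(m)\to -\infty$.

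Insofar as there is a hard part, it is purely cosmetic: settling the convention for "decreasing" (strict versus non-strict) and handling the degenerate case $x(M)=t_M=0$ noted above. No delicate estimate is needed, which is consistent with the authors' remark that the proof is "direct and so omitted".
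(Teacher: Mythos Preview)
Your argument is correct: global existence is immediate from the polynomial recursion, the rewriting $x(m+1)-x(m)=-\frac{\Delta t}{\epsilon}(x(m)^2+t_m)$ together with $t_m\ge 0$ for $m\ge M$ gives the monotonicity (the degenerate case $x(M)=t_M=0$ you flag is handled exactly as you say), and the divergence follows from the contradiction between $x(m+1)-x(m)\to 0$ and $t_m\to+\infty$. The paper itself omits the proof as ``direct'', so there is nothing further to compare.
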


\medskip
Therefore, the solution $\{x(m)\}_{m\in\bN}$ always takes negatives and never returns to positive values after it passes through the point $x_M$ with $t_M\geq 0$.
On the other hand, if the solution $\{x(m)\}_{m\in\bN}$ takes on non-positive values at $M\in\bN$ with $t_M < 0$,
then the solution $\{x(m)\}_{m\in\bN}$ may not be decreasing for $m \ge M$. 
However, if the solution $\{x(m)\}_{m\in\bN}$ lies below the unstable manifold for some $M\in\bN$ with $t_M < 0$, then it must be decreasing for $m \ge M$, as shown in the following lemma.
\begin{lemma}\label{l:earlytipping}
    Let $\{ x(m) \}_{m\in\mathbb{N}}$ be the solution of problem~(\ref{e:main}).
    Suppose there exists a $M\in\bN$ with $t_M<0$ such that $x(M)<-\sqrt{-t_M}$.
    Then $\{ x(m) \}_{m\in\mathbb{N}}$ is decreasing for $m\geq M$. 
    Moreover, $x(m)<-\sqrt{-t_m}$ for $m \ge M$ with $t_m < 0$.
\end{lemma}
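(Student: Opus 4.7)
The plan is to first establish the ``moreover'' assertion $x(m) < -\sqrt{-t_m}$ by induction on $m$ over the range where $t_m < 0$, and then to extract the monotonicity statement as a direct consequence, patching together with Lemma~\ref{l:tipping} to cover the remaining indices. The key identity driving everything is
\begin{equation*}
x(m+1) - x(m) \;=\; -\frac{\Delta t}{\epsilon}\bigl(x(m)^2 + t_m\bigr),
\end{equation*}
which shows that $x(m+1) < x(m)$ whenever $x(m)^2 > -t_m$; in particular, when $t_m < 0$ it suffices to have $|x(m)| > \sqrt{-t_m}$, and the hypothesis $x(M) < -\sqrt{-t_M}$ places the sequence in this regime at step $M$.

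For the inductive step, assume $x(m) < -\sqrt{-t_m}$ at some $m \geq M$ with $t_m < 0$. Then $x(m)^2 > -t_m$, and the identity yields $x(m+1) < x(m) < -\sqrt{-t_m}$. If in addition $t_{m+1} < 0$, then since $\{t_m\}$ is an arithmetic progression with common difference $\Delta t > 0$, we have $0 < -t_{m+1} < -t_m$, hence $-\sqrt{-t_m} \leq -\sqrt{-t_{m+1}}$, and so $x(m+1) < -\sqrt{-t_{m+1}}$. This closes the induction and simultaneously shows $x(m+1) < x(m)$ for every $m \geq M$ with $t_m < 0$.

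To upgrade the monotonicity to all $m \geq M$, let $M'$ be the smallest index exceeding $M$ with $t_{M'} \geq 0$; such an $M'$ exists because $t_m \to \infty$. The induction gives $x(M') < x(M'-1) < -\sqrt{-t_{M'-1}} < 0$, so $x(M')$ is non-positive while $t_{M'} \geq 0$, and Lemma~\ref{l:tipping} then yields that $\{x(m)\}$ is decreasing for $m \geq M'$ (and in fact tends to $-\infty$). Combined with the strict decrease already proved on $M \leq m \leq M'$, this gives monotonicity on all of $m \geq M$.

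The only real obstacle is the bookkeeping at the transition $t_m \to 0$: the bound $x(m) < -\sqrt{-t_m}$ only makes sense while $t_m < 0$, so one must split the argument into a pre-bifurcation induction and a post-bifurcation appeal to Lemma~\ref{l:tipping}, and verify that the handoff at step $M'$ satisfies the hypotheses of that lemma. Once this is done, both conclusions of the lemma follow with no further computation.
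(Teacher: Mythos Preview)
Your proof is correct. Both you and the paper use the same key identity $x(m+1)-x(m)=-\frac{\Delta t}{\epsilon}(x(m)^2+t_m)$, but the organization differs. The paper inducts directly on the monotonicity assertion $x(k+1)<x(k)$, splitting the inductive step into the cases $t_{k+1}<0$ and $t_{k+1}>0$, and only afterward reads off the invariant $x(m)<-\sqrt{-t_m}$ from the identity. You invert this: you induct on the invariant $x(m)<-\sqrt{-t_m}$ (which is the more natural thing to propagate, since $-\sqrt{-t_m}$ is increasing in $m$), get monotonicity for free while $t_m<0$, and then hand off to Lemma~\ref{l:tipping} once $t_m\geq 0$ rather than redoing that case inside the induction. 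Your route is slightly cleaner and makes better use of the lemma already available; the paper's route is self-contained and avoids the bookkeeping at the transition index $M'$.
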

\begin{proof}
    To proceed,  equation(\ref{e:maina}) can be rearranged as follows:
\begin{equation}\label{e:maina_convert}
        x(m+1) -  x(m)
        = -\frac{\Delta t}{\epsilon}(x(m)+\sqrt{-t_m})(x(m)-\sqrt{-t_m}).
    \end{equation}
   The first assertion can be proved by induction on $m$. Indeed, when $m=M$, 
    since $t_M<0$ and $x(M)<-\sqrt{-t_M}$, 
    it follows from equation~(\ref{e:maina_convert}) that $x(M+1)-x(M)<0$, and hence that the first assertion holds for $m=M$.
    Assume the first assertion holds for $m=k$, that is, $x(k+1)<x(k)$. By equation~(\ref{e:maina_convert}) and $x(k)<0$, we have $x(k)+\sqrt{-t_k}<0$. Now, for $m=k+1$, we need to divide the discussion into two cases:\\
    Case 1: $t_{k+1}<0$. Since $x(k+1)<x(k)$ and $\sqrt{-t_{k+1}}<\sqrt{-t_{k}}$, $x(k+1)+\sqrt{-t_{k+1}}<x(k)+\sqrt{-t_{k}}<0$. 
    As a result, by equation~(\ref{e:maina_convert}),
    \begin{eqnarray*}
        x(k+2)
        &=& x(k+1)-\frac{\Delta t}{\epsilon}(x(k+1)+\sqrt{-t_{k+1}})(x(k+1)-\sqrt{-t_{k+1}})\\
        &<& x(k+1)-\frac{\Delta t}{\epsilon}(x(k)+\sqrt{-t_{k}})(x(k+1)-\sqrt{-t_{k+1}})\\
        &&\big(\text{since 
        $x(k+1)<0$\big)}\\
        &<& x(k+1) \hspace{0.2cm} 
        \big(\text{since $x(k)+\sqrt{-t_{k}}<0$}\big),
    \end{eqnarray*}
    and so the first assertion holds for $m=k+1$.\\
    Case 2: $t_{k+1}>0$. Equation~(\ref{e:maina}) gives
    \begin{eqnarray*}
        x(k+2)
        &=& x(k+1)-\frac{\Delta t}{\epsilon}(x(k+1)^2+t_{k+1})\\
        &<& x(k+1)  \hspace{0.2cm} \big(\text{since $t_{k+1}>0$}\big),
    \end{eqnarray*}
    and so the first assertion holds for $m=k+1$.\\
    Therefore, by induction, $\{ x(m) \}$ is decreasing for $m\geq M$.
    Finally, the second assertion follows from the first assertion and equation~(\ref{e:maina_convert}). 
\end{proof}
Motivated by Lemma~\ref{l:tipping} and Lemma~\ref{l:earlytipping}, we introduce the tipping time and tipping points, as stated in the following definition.   

\begin{definition}
   Let $\{x(m)\}_{m\in\bN}$ be the solution of problem~(\ref{e:main})
and 
$$
  \mt := \inf\{m \in\bN:\, x(m) < -\sqrt{\max\{ -t_m,0 \}} \}.
$$
Then the corresponding time $t_{\mt}$ is said to be a tipping time of  $\{x(m)\}_{m\in\bN}$ and the corresponding point $x(\mt)$ is said to be a tipping point of  $\{x(m)\}_{m\in\bN}$.
In this case, the solution $\{x(m)\}_{m\in\bN}$ is said to be tipping at the tipping time $t_{\mt}$.
\end{definition}

Lemma~\ref{l:tipping} and Lemma~\ref{l:earlytipping}
indicate that once the solution $\{x(m)\}_{m\in\bN}$ passes through the tipping point, then it becomes decreasing and tends to negative infinity as $m \to \infty$.
Therefore, the dynamical behavior of the solution $\{x(m)\}_{m\in\bN}$ is dramatically changed after it passes through the tipping point.



\subsection{Main analytical results}

\noindent
To formulate our first main result, we need the following notation. 
\begin{definition}
Let $x_1(m,\epsilon), x_2(m,\epsilon)$ be two sequences defined for $m\in \bN \bigcup \{0 \}$ and parameter $\epsilon\in (0, \epsilon_0]$. 
Denote by
$$
    x_1(m,\epsilon) \asymp x_2(m,\epsilon),
$$
if there exists two constants $c_\pm >0$ such that
$$
    c_-x_2(m,\epsilon) \leq x_1(m, \epsilon) \leq c_+x_2(m,\epsilon)
$$
for all $m\in \bN \bigcup \{0 \}$ and $\epsilon\in (0, \epsilon_0]$.
\end{definition}

\medskip

\begin{figure}[ht]
    \centering
    \begin{subfigure}[b]{0.49\textwidth}
         \centering
         \includegraphics[width=\textwidth]{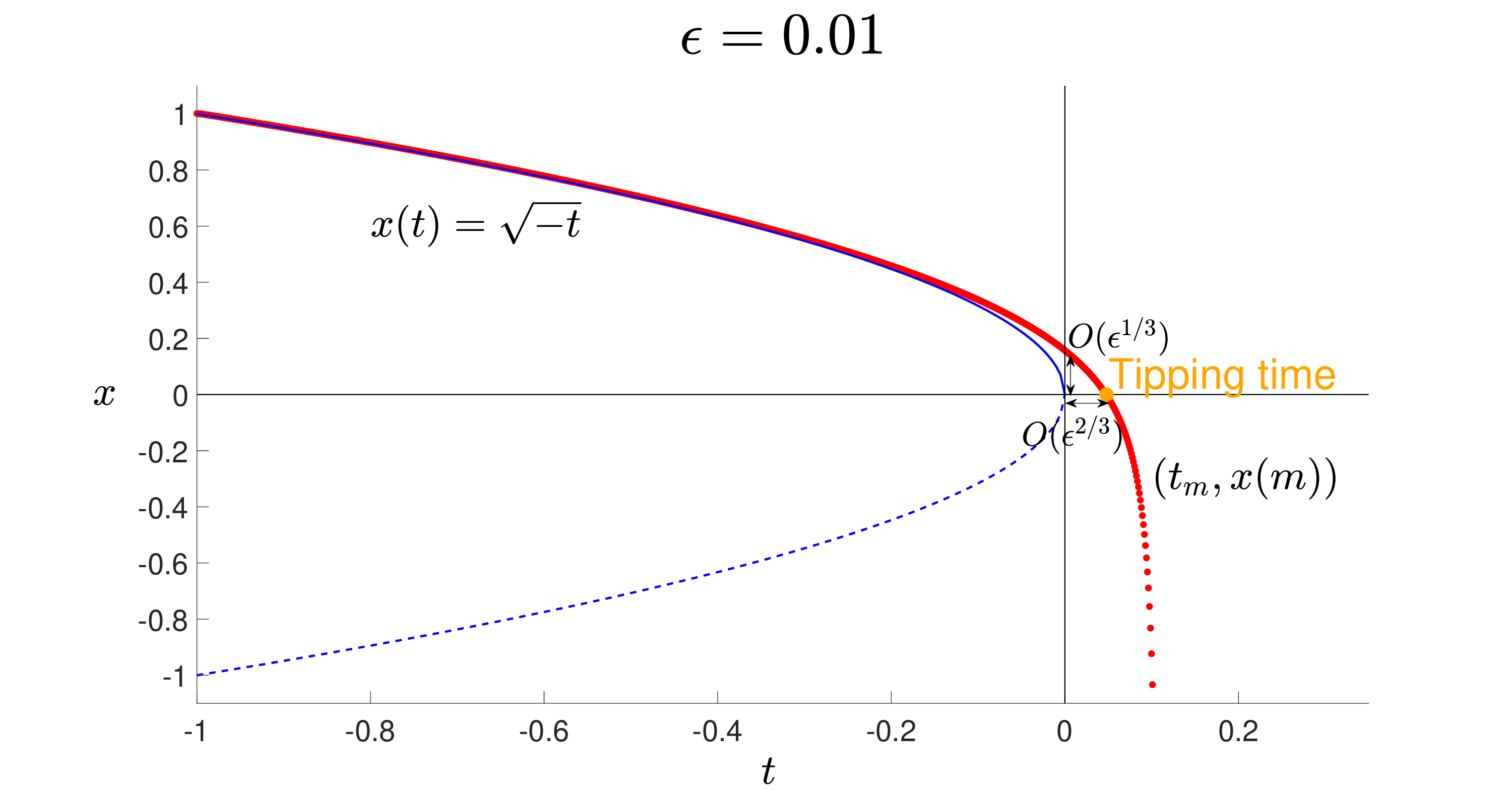}
         \caption{}
     \end{subfigure}
     \hfill
     \begin{subfigure}[b]{0.49\textwidth}
         \centering
         \includegraphics[width=\textwidth]{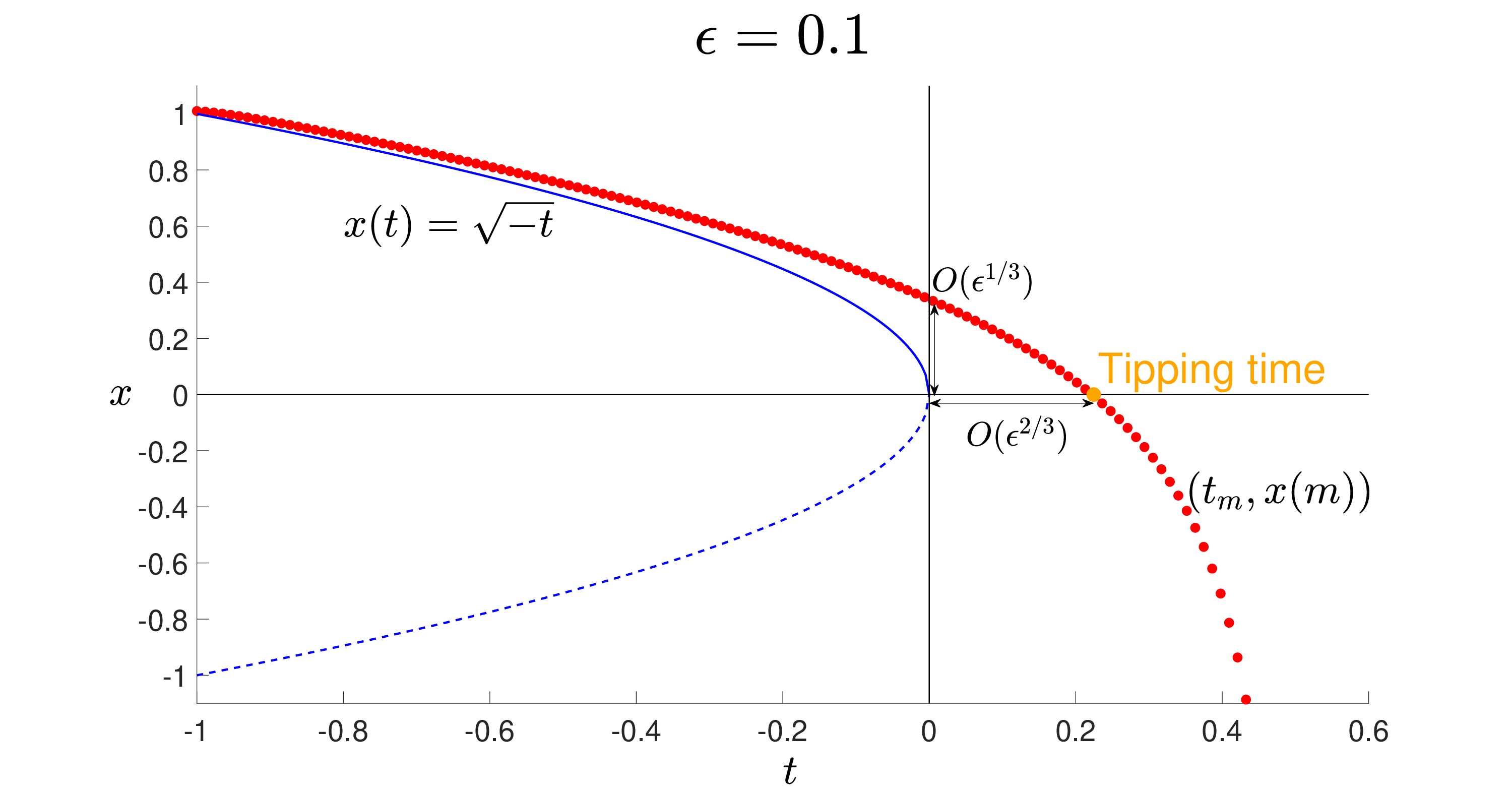}
         \caption{}
     \end{subfigure}
     \begin{subfigure}[b]{0.49\textwidth}
         \centering
         \includegraphics[width=\textwidth]{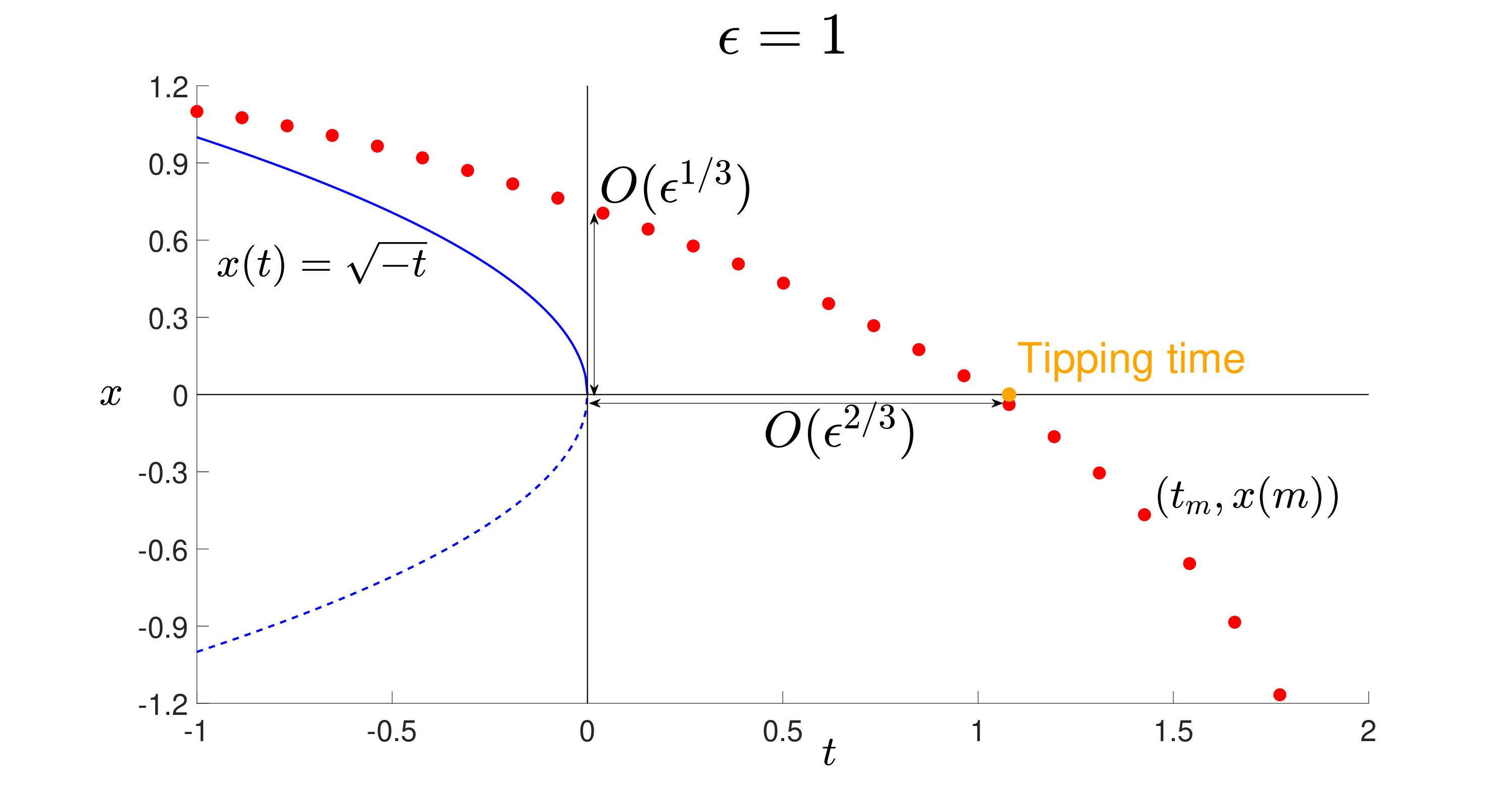}
         \caption{}
     \end{subfigure}
     \hfill
     \begin{subfigure}[b]{0.49\textwidth}
         \centering
         \includegraphics[width=\textwidth]{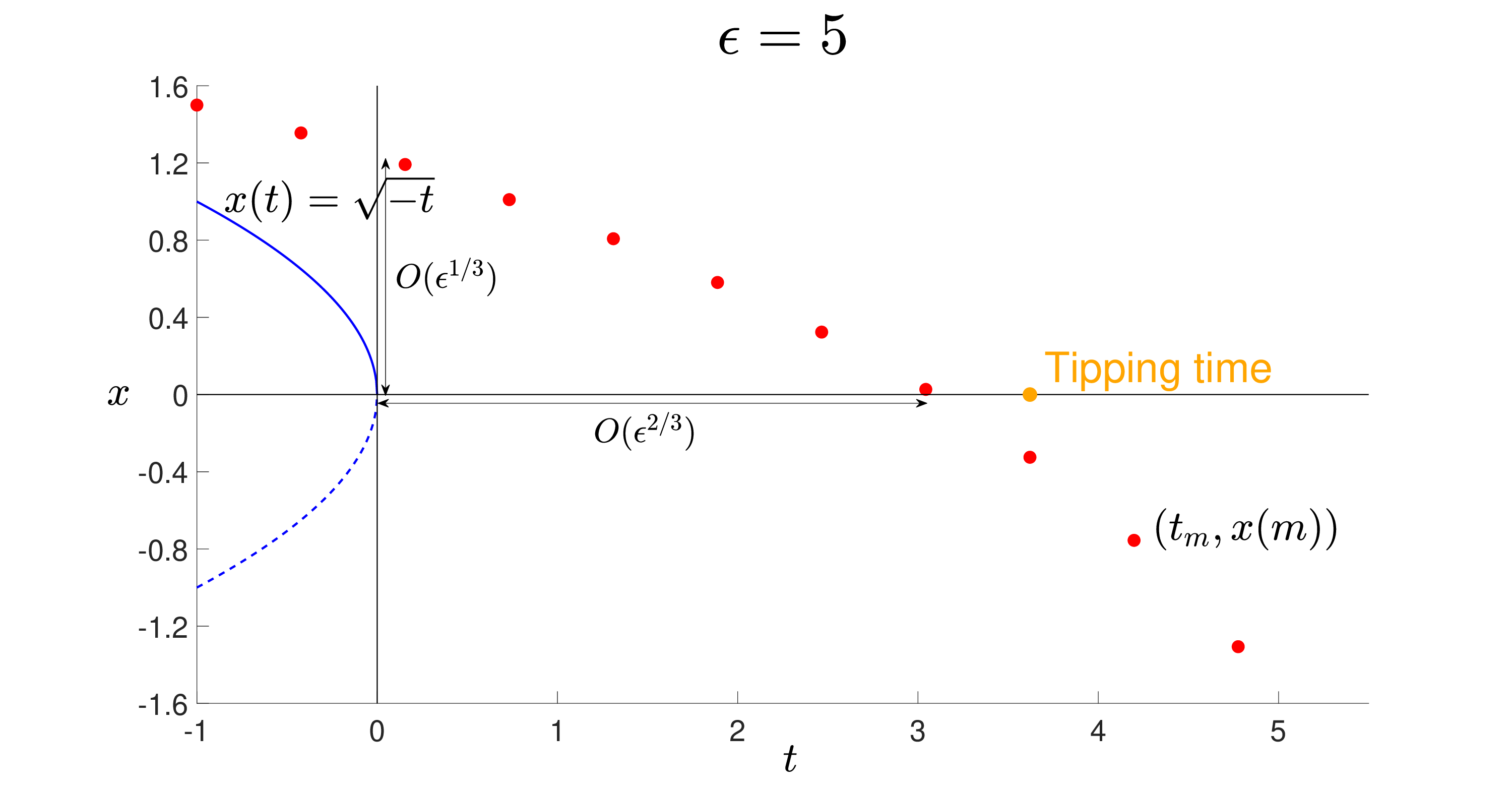}
         \caption{}
     \end{subfigure}
    \caption{\label{fig:1}
Solutions of problem~(\ref{e:main}) plotted in the $(t, x)$ plane for various $\epsilon \in (0, 1)$. The time variable $t$ is exactly the slowly varying bifurcation parameter $p$ due to the relation $p=\epsilon \tau$.
The stable (resp. unstable) equilibrium solution of the autonomous counterpart is plotted as a thick (resp. dashed) blue curve.
(a). $\epsilon = 0.01$,   (b). $\epsilon = 0.1$,  (c). $\epsilon = 1$, and  (d). $\epsilon = 5$ with $\Delta t=\epsilon\ln(2)/6$ for all cases.}
\end{figure}

\medskip

\noindent
Here is the result for the case with $\epsilon/\Delta t = O(1)$.
\begin{theorem}[{\bf Solution behavior with $\epsilon/\Delta t = O(1)$}] \label{t:main1}
Let $\{x(m)\}_{m\in\bN}$ be the solution of problem~(\ref{e:main})
and set 
\begin{equation*}
    \delta_0 = \min\Big\{ \frac{\sqrt{-t_0}}{2},  \frac{1}{3\sqrt{-t_0}}, \frac{\ln 2}{6} \Big\}.
\end{equation*}
Then there exists a small $\epsilon_0>0$ such that
for each pair $(\Delta t, \epsilon)$ with $\Delta t/ \epsilon \in (0, \delta_0)$ and $\epsilon \in (0, \epsilon_0)$,
two positive constants $c_1,c_2$ can be chosen so that
\begin{eqnarray}
x(m)-\sqrt{-t_m} &\asymp& \frac{\epsilon}{|t_m|} 
\hspace{0.5 cm} \text{for } m\in\bN \text{ such that }\;  t_0\leq t_m\leq -c_1\epsilon^{\frac{2}{3}}, \label{t:main1_temp30}\\
  x(m)&\asymp& \epsilon^{\frac{1}{3}}   \hspace{0.8 cm}\text{for } m\in\bN \text{ such that } -c_1\epsilon^{\frac{2}{3}}\leq t_m\leq c_2\epsilon^{\frac{2}{3}}. \label{t:main1_temp60}
\end{eqnarray}
Moreover, $x(m)$ is negative for $t_m > c_2\epsilon^{2/3}$ and approaches negative infinity as $m\to\infty$.
See Figure~\ref{fig:1} for an illustration.
\end{theorem}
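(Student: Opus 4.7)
The plan is to split the trajectory into the three windows indicated by the statement and, in each window, trap $x(m)$ between explicit super- and subsolutions of the appropriate scale. The unifying change of variables is $y(m):=x(m)-\sqrt{-t_m}$. Using the identity $\sqrt{-t_m}-\sqrt{-t_{m+1}}=\Delta t/(\sqrt{-t_m}+\sqrt{-t_{m+1}})$ and \eqref{e:maina_convert}, the recursion becomes
\begin{equation*}
y(m+1)=\Bigl(1-\tfrac{2\Delta t}{\epsilon}\sqrt{-t_m}\Bigr)y(m)-\tfrac{\Delta t}{\epsilon}y(m)^2+\tfrac{\Delta t}{\sqrt{-t_m}+\sqrt{-t_{m+1}}}.
\end{equation*}
Balancing the linear contraction $\tfrac{2\Delta t}{\epsilon}\sqrt{-t_m}$ against the inhomogeneous forcing $\tfrac{\Delta t}{2\sqrt{-t_m}}$ and dropping the quadratic term produces the formal slow manifold $y^{\ast}(m)\asymp\epsilon/(4|t_m|)$, which is exactly the prediction in \eqref{t:main1_temp30}.

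For the first window $t_0\le t_m\le -c_1\epsilon^{2/3}$ I would prove \eqref{t:main1_temp30} by induction on $m$, showing that the barriers $y_{\pm}(m):=c_{\pm}\,\epsilon/|t_m|$ are respectively super- and subsolutions of the recursion above. Using the expansion $1/|t_{m+1}|=1/|t_m|-\Delta t/t_m^{2}+O(\Delta t^{2}/|t_m|^{3})$, the required step-by-step inequality reduces to a numerical condition comparing the contraction factor $1-\tfrac{2\Delta t}{\epsilon}\sqrt{-t_m}$ with the ratio $|t_m|/|t_{m+1}|$, together with control of the quadratic correction $\tfrac{\Delta t}{\epsilon}y_{\pm}(m)^{2}=O(\Delta t\,\epsilon/t_m^{2})$. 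The three pieces of $\delta_0$ enter here: $\delta_0\le\sqrt{-t_0}/2$ ensures the contraction factor is nonnegative, $\delta_0\le\ln 2/6$ keeps it bounded away from $1$ uniformly, and $\delta_0\le 1/(3\sqrt{-t_0})$ absorbs the quadratic term at the initial slice. The cutoff $c_1\epsilon^{2/3}$ is forced precisely where the quadratic correction $\epsilon/|t_m|$ ceases to be dominated by the slow-manifold scale $\sqrt{-t_m}$, i.e., when $|t_m|\asymp\epsilon^{2/3}$.

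For the window $|t_m|\le c_2\epsilon^{2/3}$ I would rescale via $x(m)=\epsilon^{1/3}u(m)$, $t_m=\epsilon^{2/3}s_m$, which converts \eqref{e:maina} into $u(m+1)=u(m)-\tfrac{\Delta t}{\epsilon^{2/3}}\bigl(u(m)^{2}+s_m\bigr)$ with effective step $\Delta s=\Delta t/\epsilon^{2/3}\le\delta_0\,\epsilon^{1/3}$. The Phase 1 estimate at the matching point $t_m\approx-c_1\epsilon^{2/3}$ provides $u(m)\asymp 1$ with $u(m)>0$ as the entry datum. On the bounded $s$-interval $[-c_1,c_2]$ the recursion is a small-step perturbation of the Riccati-type ODE $du/ds=-(u^{2}+s)$; its positive solution with the matched initial value remains strictly positive up to some $s^{\ast}>0$ (the continuous passage time). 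Choosing $c_2\in(0,s^{\ast})$ and applying a discrete Gronwall comparison with constant barriers $u_{\pm}=\beta_{\pm}$ chosen independently of $\epsilon$ traps $u(m)$ between two positive constants, giving \eqref{t:main1_temp60}.

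Finally, at $t_m=c_2\epsilon^{2/3}$ the rescaled $u$ still exceeds a positive constant, so a further $O(1)$ steps in $s$ (i.e.\ $O(\epsilon^{-1/3})$ discrete steps) drive $u$ below zero; once $x(M)\le 0$ at some $M$ with $t_M>0$, Lemma \ref{l:tipping} delivers the monotone decrease to $-\infty$. The main obstacle I anticipate is the self-consistency of the Phase 1 induction: the forcing, the contraction rate, and the quadratic correction all scale differently in $|t_m|$, and the two-sided barrier must remain sharp all the way down to the matching scale $|t_m|\asymp\epsilon^{2/3}$, with no slack in $c_{\pm}$. Tuning $c_{\pm}$ simultaneously so that both barriers propagate through the entire window, while precisely exploiting the three-part definition of $\delta_0$, is the technical heart of the argument; the subsequent rescaling and tail arguments are comparatively routine.
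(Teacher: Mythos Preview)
Your three-window decomposition, the shift $y(m)=x(m)-\sqrt{-t_m}$, and the corner-layer rescaling $x=\epsilon^{1/3}u$, $t=\epsilon^{2/3}s$ all coincide with the paper's. The techniques inside the windows, however, differ substantially. In the outer region the paper does \emph{not} try to propagate the nonlinear barriers $c_{\pm}\epsilon/|t_m|$ through the full recursion; it linearizes twice. For the upper bound it simply drops the (negative) quadratic term, compares to the linear recurrence $z(m+1)=A(m)z(m)+g(m)$ via the comparison principle (Lemma~\ref{l:compasrison}), and bounds the resulting product--sum by a geometric series to get $z(m)\le K\epsilon/|t_m|$ with $K=\alpha|t_0|+\tfrac14$ and no tuning. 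That upper bound, together with the specific choice $c_1=K^{2/3}$, yields the a~priori estimate $z(m)\le\sqrt{-t_m}$ on the whole outer window (Lemma~\ref{l:main1}); only then is the quadratic term handled, via the crude replacement $z^2\le\sqrt{-t_m}\,z$, which merely changes the linear contraction coefficient from $2$ to $3$. The resulting lower linear recurrence is solved explicitly: the product is converted to an exponential through $1-x\ge e^{-2x}$ (this is the role of $\Delta t/\epsilon\le\tfrac{\ln 2}{6}$, not ``keeping the contraction away from $1$''), the Riemann sum is replaced by an integral, and a Berglund--Gentz integral lemma (Lemma~\ref{l:main2}) delivers $z(m)\ge C\epsilon/|t_m|$. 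This two-pass linearization completely sidesteps the simultaneous barrier-tuning problem you correctly flag as the technical heart of your approach; your attribution of the three pieces of $\delta_0$ is also off---$1/(3\sqrt{-t_0})$ is what makes $A(m)\ge 0$ and secures the nonnegativity in Lemma~\ref{l:main1}. In the corner layer the paper never invokes the continuous Riccati flow or a discrete Gronwall argument: a direct induction shows the rescaled sequence is strictly decreasing and positive on $m_1\le m\le m_0$ (using $1-2\Delta s\,y(m_1)>0$, which is what fixes $\epsilon_0$), and the order-one size of $c_2$ comes from sandwiching the first nonpositive index between two explicit integers $m_2^{\pm}$ via the telescoping sum $y(m)=y(m_0+1)-\Delta s\sum(y(i)^2+s_i)$ and the already-established monotonicity (Lemma~\ref{e:c_2}). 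Your ODE-comparison route is viable but would require a separate discrete-to-continuous consistency estimate that the paper's purely discrete induction avoids.
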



\medskip

\noindent
Here is the result for the case with $\epsilon/\Delta t = o(1)$.
\begin{theorem}[{\bf Solution behavior with $\epsilon/\Delta t = o(1)$}] \label{t:main2}
Let $\{x(m)\}_{m\in\bN}$ be the solution of problem~(\ref{e:main}).
\begin{enumerate}[\rm(a)]
\item{\rm(Tipping at $m=1$)}
 Let $\Delta t>1/\alpha$. 
     Then $x(m)<-\sqrt{-t_m}$ at $m=1$,
     that is, $x(1)$ is the tipping point.
\item{\rm(Tipping at $m=3$)}
Let $\Delta t=C\epsilon^b$ with $C > 0$ and $b\in(0,1/2)$.
    Suppose that $\alpha>1/(-4t_0)$.
    Then there exists a small $\epsilon_0>0$ such that for $\epsilon\in(0,\epsilon_0)$, the solution $\{x(m)\}$ satisfies 
    \begin{enumerate}[\rm(i)]
        \item 
        $x(m) > 0$ and $(-1)^m(x(m) - \sqrt{-t_m})>0$ for $m=1, 2$; and
        \item 
        $x(m)<-\sqrt{-t_m}$ at $m=3$.
    \end{enumerate}     
\end{enumerate}
See Fig.~\ref{fig:Negative_Tipping} for an illustration.
\end{theorem}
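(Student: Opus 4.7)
The plan is to compute the iterates $x(1)$, $x(2)$, $x(3)$ directly from (\ref{e:maina}) and its factored form (\ref{e:maina_convert}), reading off the signs and magnitudes explicitly. The proof is a three-step unrolling of the recurrence; no continuous-time tools are invoked.

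For Part (a), substituting $x_0 = \sqrt{-t_0} + \alpha\epsilon$ into (\ref{e:maina}) gives the exact identity
\begin{equation*}
   x(1) = \sqrt{-t_0}\,(1-2\alpha\Delta t) + \alpha\epsilon(1-\alpha\Delta t).
\end{equation*}
Under the hypothesis $\Delta t > 1/\alpha$, both $1-2\alpha\Delta t < -1$ and $1-\alpha\Delta t < 0$, so $x(1) < -\sqrt{-t_0}$. A case split on the sign of $t_1$ (using $\sqrt{-t_1} < \sqrt{-t_0}$ when $t_1<0$, and only $x(1)<0$ being needed when $t_1 \geq 0$) then yields $x(1) < -\sqrt{\max\{-t_1, 0\}}$, which is the tipping condition.

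For Part (b), writing $s_m := \sqrt{-t_m}$ and $d_m := x(m) - s_m$, the factored recurrence becomes
\begin{equation*}
   d_{m+1} = d_m\bigl[1 - \tfrac{\Delta t}{\epsilon}(x(m)+s_m)\bigr] + (s_m - s_{m+1}),
\end{equation*}
and I would Taylor-expand $s_m - s_{m+1} = \Delta t/(2 s_m) + O(\Delta t^2)$. Starting from $d_0 = \alpha\epsilon$ and plugging $\Delta t = C\epsilon^b$ with $b\in(0,1/2)$, the computation at $m=0$ yields
\begin{equation*}
   d_1 = -\Delta t\Bigl[2\alpha\sqrt{-t_0} - \tfrac{1}{2\sqrt{-t_0}}\Bigr] + O(\epsilon) + O(\Delta t^2).
\end{equation*}
The hypothesis $\alpha > 1/(-4t_0)$ is exactly equivalent to the bracketed quantity being strictly positive, and since $\Delta t = C\epsilon^b \gg \epsilon$ for $b\in(0,1)$, this gives $d_1 \asymp -\epsilon^b$. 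Combined with $s_1 \to \sqrt{-t_0}$, this forces $x(1)>0$ and $x(1) < s_1$, establishing claim (i) at $m=1$.

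At $m=1$ the factor $1 - \tfrac{\Delta t}{\epsilon}(x(1)+s_1) \sim -2\sqrt{-t_0}\,C\,\epsilon^{b-1} \to -\infty$, so multiplying by $d_1$ produces $d_2 \asymp \epsilon^{2b-1}$ with a strictly positive leading coefficient, and hence $x(2) = s_2 + d_2 \to +\infty$ and $x(2) > s_2$, giving (i) at $m=2$. At $m=2$ the quadratic term now dominates: $x(2)^2 \asymp \epsilon^{4b-2}$ overwhelms $|t_2| = O(1)$, so
\begin{equation*}
   x(3) \sim -\tfrac{\Delta t}{\epsilon}\,x(2)^2 \asymp -\epsilon^{5b-3} \to -\infty,
\end{equation*}
which in particular satisfies $x(3) < -\sqrt{-t_3}$, giving (ii). The main obstacle is the sign-critical cancellation at $m=1$: the two contributions to $d_1$ are both of order $\Delta t$ but of opposite signs, and the assumption $\alpha > 1/(-4t_0)$ is precisely what tips the balance so that $d_1<0$. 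Once that sign is secured, the cascade $d_1 \asymp -\epsilon^b$, $d_2 \asymp +\epsilon^{2b-1}$, $x(3) \asymp -\epsilon^{5b-3}$ is essentially automatic, because the exponents $b,\ 2b-1,\ 5b-3$ form a strictly decreasing sequence on $b\in(0,1/2)$, ensuring each leading term dominates all neglected remainders at the next step.
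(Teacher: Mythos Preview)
Your proof is correct. Part (a) is essentially identical to the paper's argument. For part (b), however, you take a genuinely different and cleaner route.

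The paper's proof of (b) proceeds in three steps but, after the first one (which matches yours), it turns to exact algebraic inequalities: it introduces the constant $K=(1+\sqrt{5})/2$, proves $\Delta t/\epsilon > K/(\sqrt{-t_1}-x(1))$, and then rearranges the recurrence to deduce $x(2)>\sqrt{-t_2}$; for $m=3$ it analyzes the positive root of an auxiliary quadratic $p(X)=(-t_1-x(1)^2)X^2-(\sqrt{-t_1}-x(1))X-1$, shows $\Delta t/\epsilon$ exceeds that root, and again rearranges. Your approach instead tracks the leading-order asymptotics of $d_m=x(m)-\sqrt{-t_m}$ through the cascade $d_1\asymp -\epsilon^{b}$, $d_2\asymp +\epsilon^{2b-1}$, $x(3)\asymp -\epsilon^{5b-3}$, checking at each stage that the dominant term overwhelms all remainders because the exponents decrease strictly on $b\in(0,1/2)$.

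What your approach buys is transparency: it makes immediately visible why both hypotheses are sharp (the sign of the bracket at $m=1$ pinpoints $\alpha>1/(-4t_0)$; the requirement $5b-3<b-1$ pinpoints $b<1/2$), and it exposes the striking fact that $x(2)\to+\infty$ as $\epsilon\to0$, which the paper's inequalities obscure. The paper's approach, by contrast, keeps everything as explicit inequalities and so in principle could be mined for quantitative bounds on $\epsilon_0$, though the paper does not actually do this. Either way, both arguments are ultimately asymptotic in nature, and yours is the more economical of the two.
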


The dynamical behavior of the solution given in Theorem~\ref{t:main2}
cannot be observed in the autonomous counterpart, i.e., system (\ref{e:intro_temp10}). 
Therefore, in contrast to the continuous cases, the solution behavior depends on the ratio of the time mesh size to the sweet rate.
Further investigation of the solution of problem~(\ref{e:main}) with negative tipping time is given in Section~\ref{sec:negative tipping time}.

\medskip

\begin{figure}[b!]
    \centering
    \begin{subfigure}[b]{0.42\textwidth}
         \centering
         \includegraphics[width=\textwidth]{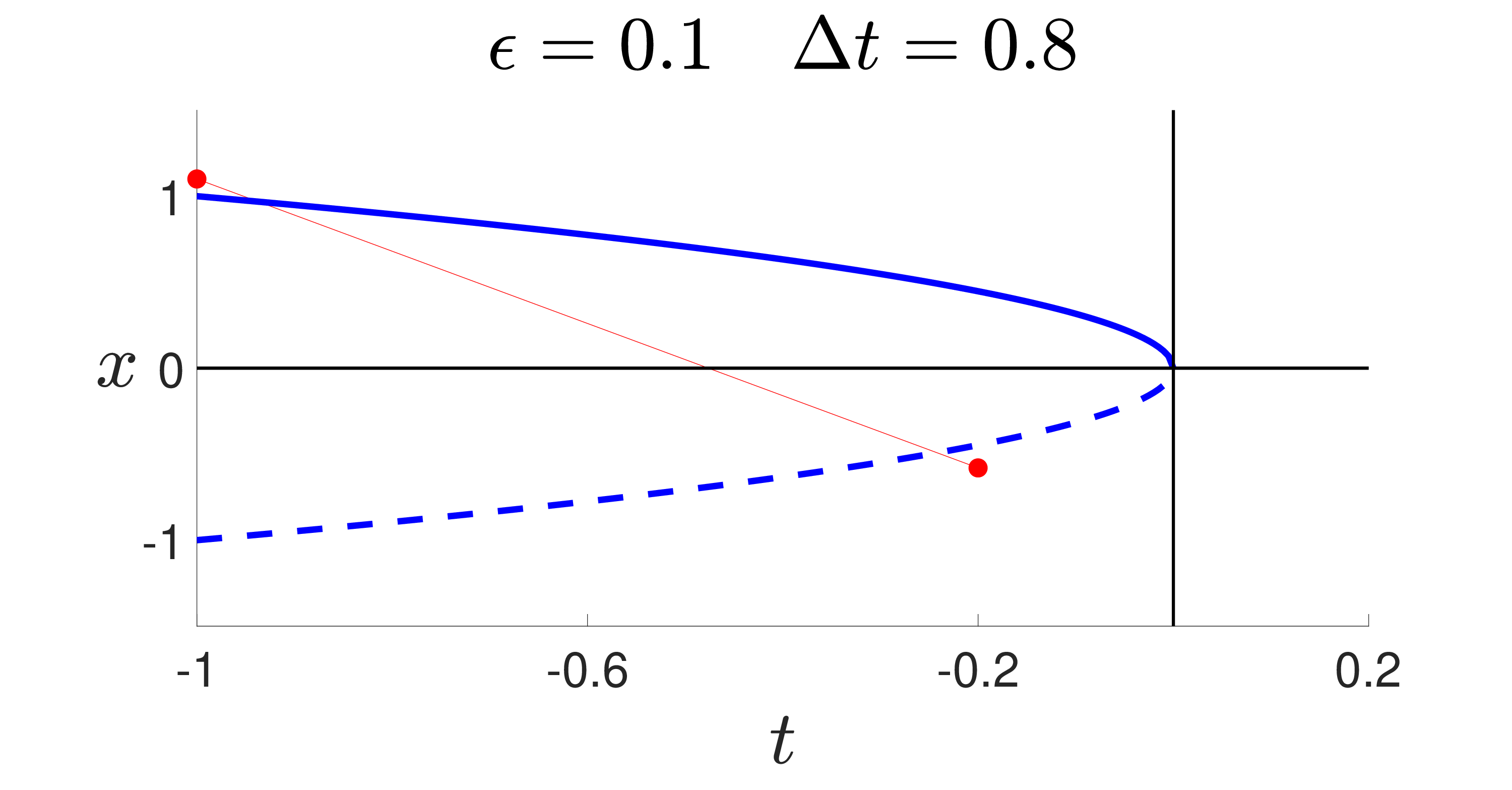}
         \caption{}
     \end{subfigure}
     \hfill
     \begin{subfigure}[b]{0.42\textwidth}
         \centering
         \includegraphics[width=\textwidth]{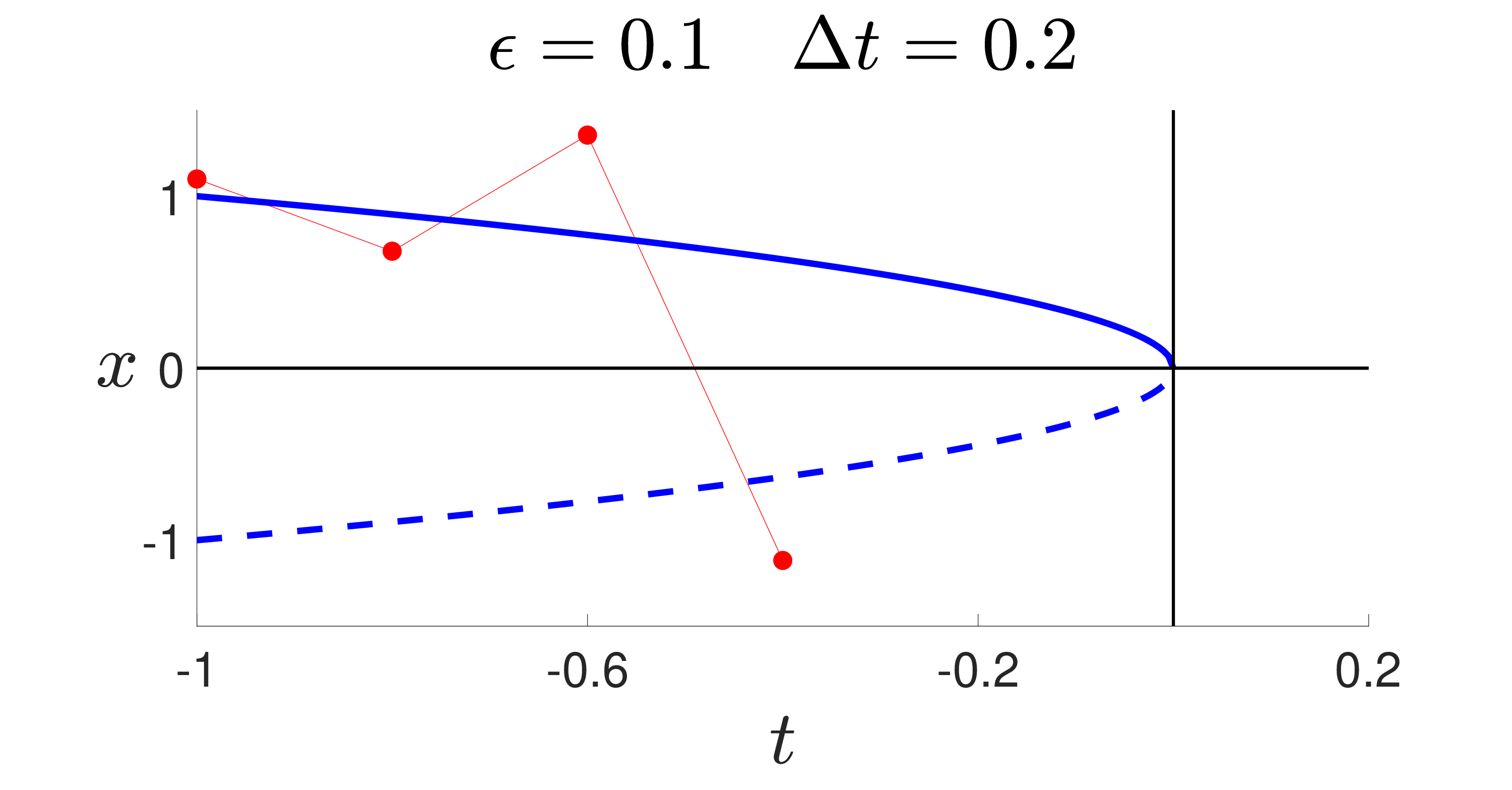}
         \caption{}
     \end{subfigure}
    \caption{\label{fig:Negative_Tipping}
Solutions of problem~(\ref{e:main}) (with negative tipping time) plotted in the $(t, x)$ plane:
(a). $\mt = 1$,   (b). $\mt = 3$.}
\end{figure}
\section{Solution behavior when $\epsilon/\Delta t = O(1)$}
\setcounter{equation}{0}

\subsection{Auxiliary lemma}

For the proof of Theorem \ref{t:main1}, we need the following comparison lemma.
\begin{lemma}[{\rm Comparison principle}]\label{l:compasrison}
    Let $a, g: \bN\cup\{0\} \to \bR$ be functions and $a$ is nonnegative.
    Suppose $x: \bN\cup\{0\} \to \bR$ and $\Tilde{x}: \bN\cup\{0\} \to \bR$
    are solutions of the problems
    $$
    \begin{cases}
        x(m+1) = a(m)x(m)+g(m)&\\
        x(0)=x_0,&
    \end{cases}
    $$
    and 
    $$
    \begin{cases}{}
        \Tilde{x}(m+1) \leq  a(m)\Tilde{x}(m)+g(m)&\\
        \Tilde{x}(0)=x_0,&
    \end{cases}
    $$
    respectively. 
    Then
    \begin{equation*}\label{e:lemma1}
        \Tilde{x}(m) \leq x(m) \quad \forall m \in \bN \cup \{0 \}.
    \end{equation*}
    Similarly, $x(m) \leq \Tilde{x}(m)$ for all $m \in \bN \cup \{0 \}$,
    provided if the sign $\leq$ is replaced by $\geq$ in the above two difference inequalities for $\tilde{x}$.
\end{lemma}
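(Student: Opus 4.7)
The plan is to prove both inequalities by straightforward induction on $m$, with the nonnegativity of $a$ being the essential ingredient that lets a pointwise inequality at step $k$ propagate to step $k+1$.

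For the first assertion, I would start with the base case $m=0$, where the initial conditions $x(0)=\tilde{x}(0)=x_0$ immediately give $\tilde{x}(0)\le x(0)$. For the inductive step, I would assume $\tilde{x}(k)\le x(k)$ and then chain the two defining relations: from the inequality satisfied by $\tilde{x}$ I get
\begin{equation*}
\tilde{x}(k+1)\;\le\;a(k)\tilde{x}(k)+g(k),
\end{equation*}
and then using $a(k)\ge 0$ together with the inductive hypothesis I can replace $\tilde{x}(k)$ by $x(k)$ on the right-hand side without reversing the inequality, yielding $\tilde{x}(k+1)\le a(k)x(k)+g(k)=x(k+1)$. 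This closes the induction. The symmetric statement (with $\ge$ in place of $\le$) is proved by repeating the same induction verbatim, again relying on $a(k)\ge 0$ to preserve the reversed inequality in the multiplication step.

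There is essentially no hard part in this proof; the lemma is a routine discrete Gr\"onwall-type comparison. The only subtlety worth emphasizing in the write-up is the role of the hypothesis $a\ge 0$: if $a(k)$ were allowed to change sign, then multiplying the inductive hypothesis $\tilde{x}(k)\le x(k)$ by $a(k)$ could flip the inequality, and the conclusion would fail. Since the later applications of this lemma (in the proof of Theorem~\ref{t:main1}) use the linearization of equation~(\ref{e:maina_convert}) around the stable branch $\sqrt{-t_m}$, where the coefficient $a(m)=1-\tfrac{\Delta t}{\epsilon}\bigl(x(m)+\sqrt{-t_m}\bigr)$ needs to be controlled to stay in $[0,1)$, I would mention this nonnegativity requirement explicitly so that the reader sees what must be checked when invoking the lemma.
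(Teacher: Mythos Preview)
Your proposal is correct and follows essentially the same approach as the paper: a direct induction on $m$, using the base case $x(0)=\tilde{x}(0)=x_0$ and the nonnegativity of $a(k)$ to propagate the inequality through the recursion. The paper's write-up is nearly identical (it just chains the inequalities starting from $x(k+1)$ rather than from $\tilde{x}(k+1)$); your added remark on why $a\ge 0$ is indispensable is a helpful gloss not present in the paper.
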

\begin{proof}
    We prove this lemma by induction on $m$. 
    When $m=0$, the assertion holds since $x(0) = \Tilde{x}(0) = x_0$.
    Assume that when $m=k$, the assertion holds, that is,  
    $\Tilde{x}(k)\leq x(k)$. Now, for $m=k+1$,
    \begin{eqnarray*}
        x(k+1) &=&     a(k)x(k)+g(k)\\ [1ex]
               &\geq&  a(k)\Tilde{x}(k)+g(k) 
                \quad \text{(since $x(k) \geq \tilde{x}(k)$ and $a(k) \geq 0$)} \\ [1ex]
               &\geq&  \Tilde{x}(k+1)
    \end{eqnarray*}
    the assertion holds for $m=k+1$.
    Therefore, by induction, the assertion of this lemma is established.
\end{proof}

The proof of Theorem~\ref{t:main1} consists of two parts:
one for (\ref{t:main1_temp30}), and the other for (\ref{t:main1_temp60}).
For the proof, the domain of the solution are divided into three sub-regions:
Outer region, corner layer, and beyond-corner layer (see Fig.~\ref{e:region_proof}).

\begin{figure}[ht]
    \centering
    \includegraphics[width=\textwidth]{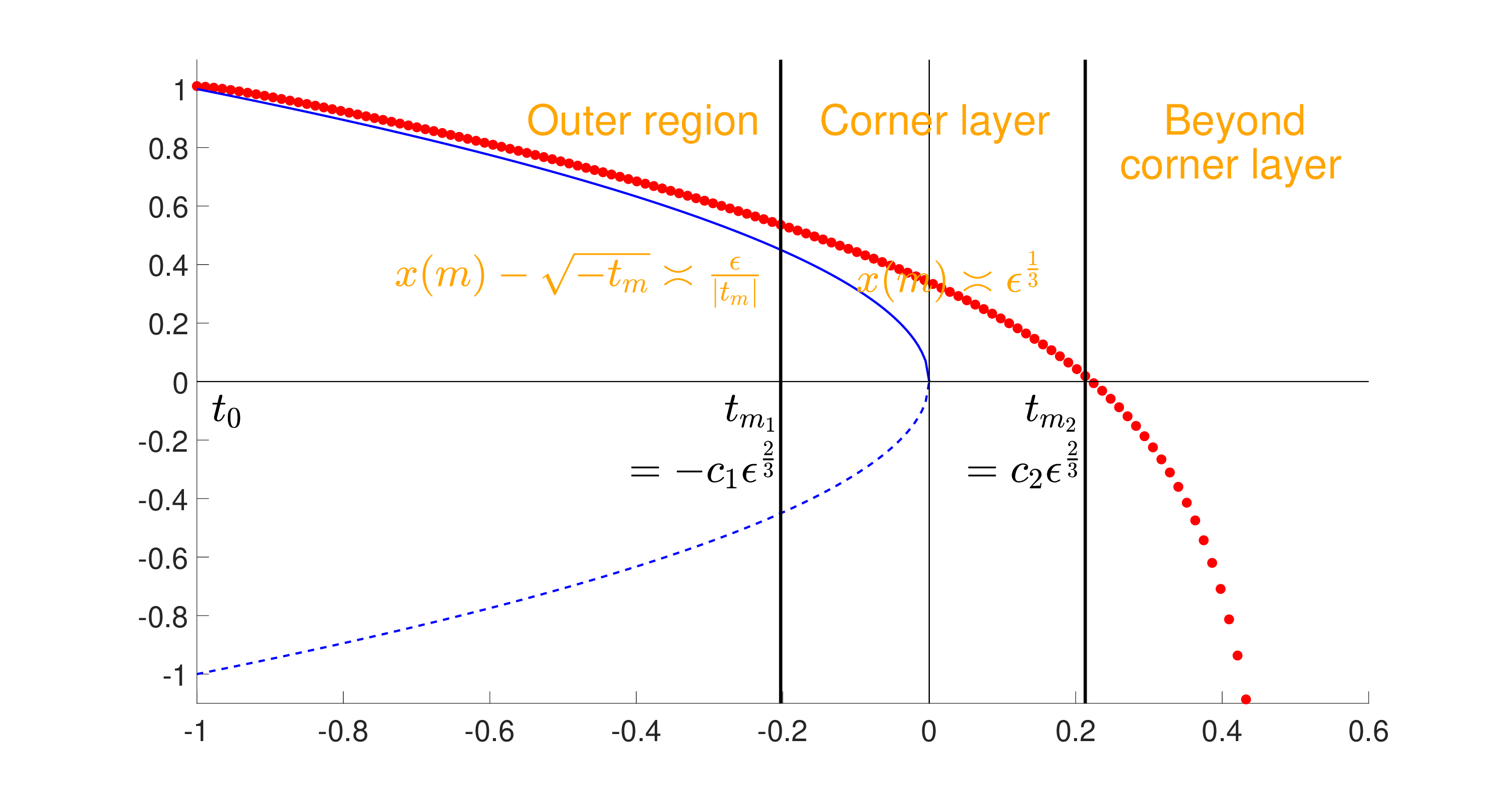}
    \caption{Outer region, corner layer, and beyond corner layer for the proof of Theorem~\ref{t:main1}}
    \label{e:region_proof}
\end{figure}

\subsection{Outer region: Proof of (\ref{t:main1_temp30})}

In this subsection, we establish (\ref{t:main1_temp30}).
To facilitate the analysis, we set up the following notations:
\begin{subnumcases} 
{\label{e:notationtemp20}}
    m_0:=\sup\big\{m\in\mathbb{N}:\, t_m \le 0 \big\},  \label{e:notationtemp20a} &\\ 
    K =  \big(\alpha |t_0|+\frac{1}{4}\big), \; c_1 = K^{2/3}, \label{e:notationtemp20b} &\\ 
    m_1:=\sup\big\{m\in\mathbb{N}:\, t_m \le -c_1\epsilon^{2/3} \big\},  \label{e:main_m*} & 
\end{subnumcases}
and make the change of variable
$$
      z(m) = x(m) - \sqrt{-t_m}, \; m\leq m_0 .
$$
Then problem~(\ref{e:main}) becomes the following problem
\begin{subnumcases} 
{\label{e:main_convert}}
   z(m+1) = -\frac{\Delta t}{\epsilon}z(m)^2-\frac{2\Delta t}{\epsilon}z(m)\sqrt{-t_m}+z(m)+\frac{\Delta t}{\sqrt{-t_m}+\sqrt{-t_{m+1}}}, &\label{e:main_converta}\\ 
     z(0) = \alpha\epsilon. \label{e:main_convertb} &
\end{subnumcases}

\noindent
{\bf Upper bound of $z(m)$}.

We first give an upper bound of $z(m)$.
Let $A(m)=-\frac{2\Delta t}{\epsilon}\sqrt{-t_m}+1$ and $g(m)=\frac{\Delta t}{2\sqrt{-t_{m+1}}}$. Indeed, noting that $t_m$ is increasing in $m\in\mathbb{N}$,
the solution of problem~(\ref{e:main_convert}) satisfies the following  problem
\begin{subnumcases} 
{\label{e:main_convert2}}
     z(m+1) \leq 
      A(m)z(m)+g(m) \;\;\,forall m \le m_0, \nonumber &\\
     z(0) = \alpha\epsilon, \label{e:main_convert2b} &
\end{subnumcases}
where $m_0$ is defined by (\ref{e:notationtemp20a})
Now, we will bound $z(m)$ by the solution of the associated linear problem. To see this, 
it is known \cite{Anosov:2006} that the solution of the linear problem
$$
z(m+1)=A(m)z(m)+g(m), \quad
z(0) = \alpha\epsilon
$$ 
is given by
\begin{equation}\label{t:main1_proof}
    z(m) = \pi(m,0)z(0)+\sum_{j=0}^{m-1}\pi(m,j+1)g(j)
\end{equation}
where
$$
    \pi(m,j) = \prod_{i=j}^{m-1}A(i).
$$
By the choice of $\Delta t$, we have
$\Delta t/\epsilon \leq 1/3\sqrt{-t_0}$,
and so
$$
    0\leq A(m)=(-\frac{2\Delta t}{\epsilon}\sqrt{-t_m}+1)\leq 1 \quad \forall\, 0\leq m\leq m_0.
$$
Now, by applying the comparison lemma (Lemma \ref{l:compasrison})
to problems (\ref{e:main_convert}) and (\ref{e:main_convert2}),
it follows that
\begin{equation}\label{ineq:main1_proof1}
    z(m+1)\leq \pi(m,0)z(0)+\sum_{j=0}^{m-1}\pi(m,j+1)g(j)
    \;\;, \forall\, m \leq m_0.
\end{equation}

Now, we will estimate each terms in the right-hand side of (\ref{ineq:main1_proof1}).
For the first term $\pi(m,0)z(0)$, we recall that
$0 \le A(m) \le 1$ for $m \le m_0$.
Then $\pi(m,0)z(0)$ is estimated as follows:
$$
    \pi(m,0)z(0)\leq z(0) = \alpha\epsilon\leq \alpha|t_0|\frac{\epsilon}{|t_m|} \quad \forall \; m \leq m_0,
$$
where in the last inequality, we have used the fact that 
$|t_m|$ is decreasing in $m \le m_0$.
The second term in the right-hand side of (\ref{ineq:main1_proof1}) is bounded by
\begin{align*}
    \sum_{j=0}^{m-1}\pi(m,j+1)g(j)
    &= \sum_{j=0}^{m-1}\prod_{i=j+1}^{m-1}(-\frac{2\Delta t}{\epsilon}\sqrt{-t_i}+1)\frac{\Delta t}{2\sqrt{-t_{j+1}}}\\
    &\leq \frac{\Delta t}{2\sqrt{-t_m}}\sum_{j=0}^{m-1}\prod_{i=j+1}^{m-1}(-\frac{2\Delta t}{\epsilon}\sqrt{-t_{m}}+1)\\
    & \hspace{0.4 cm} \text{(since  $|t_i| \ge |t_m|$ for $i\le m$)}\\
    &= \frac{\Delta t}{2\sqrt{-t_m}}\sum_{j=0}^{m-1}(-\frac{2\Delta t}{\epsilon}\sqrt{-t_{m}}+1)^{m-1-j}\\
    &\leq \frac{\Delta t}{2\sqrt{-t_m}}\sum_{j=0}^{\infty}(-\frac{2\Delta t}{\epsilon}\sqrt{-t_{m}}+1)^j\\
    &= \frac{1}{4}\frac{\epsilon}{|t_m|} \quad \forall \, m \leq m_0.
\end{align*}
Finally, recall the $K$ is defined by (\ref{e:notationtemp20b}).
Then
a combination of the estimates above gives
\begin{equation}\label{ineq:main1_upperbound}
    z(m) \leq 
   \big(\alpha |t_0|+\frac{1}{4}\big)\frac{\epsilon}{|t_m|}
   = K\frac{\epsilon}{|t_m|}
    \quad \forall \, m \leq m_0,
\end{equation}
which in turn gives an upper bound of the solution $z(m)$ of problem (\ref{e:main_convert}).

\medskip

\noindent
{\bf Nonnegativeness of $z(m)$}.

Next, we will show that $z(m)$
is nonnegative.
Indeed, recall $c_1$ and $m_1$ are defined by  (\ref{e:notationtemp20b}) and (\ref{e:main_m*}).
Then we have the following lemma.
\begin{lemma}\label{l:main1}
   Suppose that 
   $\Delta t/\epsilon \leq 1/(3\sqrt{-t_0})$.
    Then 
$$
    0 \leq z(m)\leq  \sqrt{-t_m} \quad \forall m\leq m_1.
$$
\end{lemma}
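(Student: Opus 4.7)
The plan is to combine the upper bound on $z(m)$ already derived in (\ref{ineq:main1_upperbound}) with a short induction to establish nonnegativity.

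First, I would handle the upper bound $z(m)\leq\sqrt{-t_m}$ essentially for free. Since $t_m=t_0+m\Delta t$ is increasing (and negative for $m\leq m_0$), $|t_m|$ is decreasing; in particular, for $m\leq m_1$ we have $|t_m|\geq c_1\epsilon^{2/3}=K^{2/3}\epsilon^{2/3}$, which rearranges to $K\epsilon/|t_m|\leq |t_m|^{1/2}=\sqrt{-t_m}$. Combined with (\ref{ineq:main1_upperbound}) this yields
\begin{equation*}
    z(m)\;\leq\;K\frac{\epsilon}{|t_m|}\;\leq\;\sqrt{-t_m} \qquad\text{for all } m\leq m_1,
\end{equation*}
which is the upper half of the conclusion and, crucially, also $z(m)\leq\sqrt{-t_0}$.

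For the lower bound $z(m)\geq 0$ I would induct on $m$. The base case $z(0)=\alpha\epsilon\geq 0$ is immediate. For the induction step I would rewrite (\ref{e:main_converta}) as
\begin{equation*}
    z(m+1)\;=\;z(m)\Bigl[1-\frac{\Delta t}{\epsilon}\bigl(2\sqrt{-t_m}+z(m)\bigr)\Bigr]+\frac{\Delta t}{\sqrt{-t_m}+\sqrt{-t_{m+1}}},
\end{equation*}
and the idea is to show the bracket is already nonnegative under the standing hypothesis. Assuming inductively $0\leq z(m)\leq\sqrt{-t_m}$, one has $2\sqrt{-t_m}+z(m)\leq 3\sqrt{-t_m}\leq 3\sqrt{-t_0}$, so
\begin{equation*}
    1-\frac{\Delta t}{\epsilon}\bigl(2\sqrt{-t_m}+z(m)\bigr)\;\geq\;1-\frac{3\Delta t}{\epsilon}\sqrt{-t_0}\;\geq\;0
\end{equation*}
by the assumption $\Delta t/\epsilon\leq 1/(3\sqrt{-t_0})$. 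Hence both terms on the right are nonnegative (the second is strictly positive), giving $z(m+1)\geq 0$. The induction is closed by appealing to the already-proved upper bound $z(m+1)\leq\sqrt{-t_{m+1}}$ (which holds for every $m+1\leq m_1$, independently of the induction), so the hypothesis $0\leq z(m+1)\leq\sqrt{-t_{m+1}}$ is preserved.

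There is no genuinely hard step; the mild subtlety is making sure the upper bound $z(m)\leq\sqrt{-t_m}$ that feeds the induction does \emph{not} itself rely on the yet-to-be-proved nonnegativity. This is fine because (\ref{ineq:main1_upperbound}) was derived from the linear comparison (\ref{e:main_convert2}), which only needs $A(m)\geq 0$ (guaranteed by $\Delta t/\epsilon\leq 1/(3\sqrt{-t_0})$), not any sign condition on $z(m)$. So the two bounds can be established independently, and the hypothesis $\Delta t/\epsilon\leq 1/(3\sqrt{-t_0})$ is used in both places for exactly the same reason: to keep the discrete linearized multiplier in $[0,1]$.
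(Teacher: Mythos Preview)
Your proposal is correct and follows essentially the same approach as the paper: first deduce $z(m)\le\sqrt{-t_m}$ from (\ref{ineq:main1_upperbound}) together with the definition of $c_1$ and $m_1$, then rewrite (\ref{e:main_converta}) with the factor $1-\tfrac{\Delta t}{\epsilon}(z(m)+2\sqrt{-t_m})$, bound that factor below by $0$ using the upper bound and the hypothesis on $\Delta t/\epsilon$, and induct to get $z(m)\ge 0$. Your explicit remark that (\ref{ineq:main1_upperbound}) does not presuppose nonnegativity (so there is no circularity) is a worthwhile clarification that the paper leaves implicit.
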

\begin{proof}
    First, we prove the rightmost inequality.
    Indeed, by inequality (\ref{ineq:main1_upperbound}) and the choice of $m_1$ and $c_1$,
    $$
        z(m) \leq K\frac{\epsilon}{|t_m|}
        = \big(K\frac{\epsilon}{\sqrt{|t_m|^3}}\big)\sqrt{-t_m}
        \leq \big(K\frac{\epsilon}{c_1^{3/2}\epsilon}\big)\sqrt{-t_m}
        = \sqrt{-t_m}
    $$
    for $m\leq m_1$. 
    Thus, the right inequality is established.
    
    Next, we will show that $z(m) \geq 0$ for $m \leq m_1$.
    Observe that equation~(\ref{e:main_converta}) can be rearranged as
    $$
        z(m+1)=z(m)\left[ 1-\frac{\Delta t}{\epsilon}\big(z(m)+2\sqrt{-t_m}\big) \right]+\frac{\Delta t}{\sqrt{-t_m}+\sqrt{-t_{m+1}}}.
    $$
    When $\Delta t/\epsilon \leq 1/(3\sqrt{-t_0})$ and $m\leq m_1$,
    \begin{eqnarray*}
        1-\frac{\Delta t}{\epsilon}\big(z(m)+2\sqrt{-t_m}\big) 
            &\geq& 1-\frac{1}{3\sqrt{-t_0}}\big(\sqrt{-t_m}+2\sqrt{-t_m}\big)\\
            &\geq& 1-\frac{1}{3\sqrt{-t_0}}(3\sqrt{-t_0})\\
            &\geq& 0.
    \end{eqnarray*}
    Now, we are ready to prove the nonnegativeness of $z(m)$ for $m\leq m_1$ by induction.
    Indeed, when $m=0$, the assertion holds since $z(0)=\alpha\epsilon\geq 0$.
    Assume that when $m=k$, the assertion holds, that is, $z(k)\geq 0$.
    Now, for $m=k+1$,
    \begin{eqnarray*}
        z(k+1) &=& z(k)\left[ 1-\frac{\Delta t}{\epsilon}(z(k)+2\sqrt{-t_k}) \right]+\frac{\Delta t}{\sqrt{-t_k}+\sqrt{-t_{k+1}}}\\
            &\geq& \frac{\Delta t}{\sqrt{-t_k}+\sqrt{-t_{k+1}}}\\
            &\geq& 0,
    \end{eqnarray*}
   and so the assertion holds for $m=k+1$.
    Thus, by induction, $z(m)\geq 0$ for $m\leq m_1$.
\end{proof}

\medskip

\noindent
{\bf Lower bound of $z(m)$}.

Now, we can give a lower bound of $z(m)$. A combination of equation~(\ref{e:main_converta}) and Lemma \ref{l:main1} yields
\begin{align*}
    z(m+1) &= -\sqrt{-t_m}z(m)\big(\frac{\Delta t}{\epsilon\sqrt{-t_m}}z(m)+\frac{2\Delta t}{\epsilon}\big)+z(m)+\frac{\Delta t}{\sqrt{-t_m}+\sqrt{-t_{m+1}}}\\
    &\geq -\sqrt{-t_m}z(m)\big(\frac{\Delta t}{\epsilon}+\frac{2\Delta t}{\epsilon}\big)+z(m)+\frac{\Delta t}{2\sqrt{-t_{m}}}\\
    & \hspace{0.4cm} (\text{since $0\leq z(m)/\sqrt{-t_m}\leq 1$ by Lemma \ref{l:main1}})\\
    &= \big(-\frac{3\Delta t}{\epsilon}\sqrt{-t_m}+1\big)z(m)+\frac{\Delta t}{2\sqrt{-t_{m}}}.
\end{align*}
Then the solution of problem~(\ref{e:main_convert}) satisfies the following problem
\begin{subnumcases}
{\label{e:main_convert3}}
     z(m+1) \geq \big(-\frac{3\Delta t}{\epsilon}\sqrt{-t_m}+1\big)z(m)+\frac{\Delta t}{2\sqrt{-t_{m}}}, &\\
     z(0) = \alpha\epsilon. &
\end{subnumcases}
Use similar arguments for (\ref{e:main_convert2}),
we can apply the comparison lemma (Lemma \ref{l:compasrison}) to problems (\ref{e:main_convert}) and (\ref{e:main_convert3}) to obtain that
\begin{equation}\label{ineq:main1_proof2}
    z(m) \geq \prod_{j=0}^{m-1}(-\frac{3\Delta t}{\epsilon}\sqrt{-t_j}+1)z(0)+\sum_{j=0}^{m-1}\prod_{i=j+1}^{m-1}(-\frac{3\Delta t}{\epsilon}\sqrt{-t_i}+1)\frac{\Delta t}{2\sqrt{-t_j}}.
\end{equation}

Next, we can bound the right-hand side of (\ref{ineq:main1_proof2}) by an integration. 
Notice that $1-x\geq e^{-2x}$ for $0\leq x\leq \frac{1}{2}\ln{2}$. 
Hence, by noting the following constraint on $\Delta t/\epsilon$: 
$$
    \frac{3\Delta t}{\epsilon} \leq \frac{1}{2}\ln{2}
$$
(it can be done by the choice of $\Delta t$), inequality (\ref{ineq:main1_proof2}) becomes
\begin{eqnarray}
    z(m) 
        &\geq& \prod_{j=0}^{m-1}(e^{-6\Delta t\sqrt{-t_j}/\epsilon})z(0)+\sum_{j=0}^{m-1}\prod_{i=j+1}^{m-1}(e^{-6\Delta t\sqrt{-t_j}/\epsilon})\frac{\Delta t}{2\sqrt{-t_j}}\notag\\
        &=& e^{-6\Delta t(\sum_{j=0}^{m-1}\sqrt{-t_j})/\epsilon}z(0)+\sum_{j=0}^{m-1}e^{-6\Delta t(\sum_{i=j+1}^{m-1}\sqrt{-t_j})/\epsilon}\frac{\Delta t}{2\sqrt{-t_j}}.\label{ineq:main1_proof3}
\end{eqnarray}
Observe that $f(t)=\sqrt{-t}$ is monotone decreasing in $t$.
Hence, $\Delta t\sum_{i=j+1}^{m-1}\sqrt{-t_i}$ is a lower sum of $\int_{t_{j}}^{t_m} \sqrt{-t}~dt$,
and so $e^{-6\Delta t(\sum_{i=j+1}^{m-1}\sqrt{-t_j})/\epsilon}$ is bounded by
\begin{equation}\label{ineq:main1_proof4}
    e^{-6\Delta t(\sum_{i=j+1}^{m-1}\sqrt{-t_j})/\epsilon} \geq e^{-4((-t_{j})^{3/2}-(-t_m)^{3/2})/\epsilon} \quad \forall -1\leq j \leq m-2 . 
\end{equation}
Inequality (\ref{ineq:main1_proof3}) is thus transformed into
\begin{align}
    z(m) 
        &\geq e^{4(-t_m)^{3/2}/\epsilon}\big[ z(0)e^{-4(-t_0+\Delta t)^{3/2}/\epsilon}+\sum_{j=0}^{m-1}e^{-4(-t_j)^{3/2}/\epsilon}\frac{\Delta t}{2\sqrt{-t_j}}\big]\notag\\
        &\geq e^{4(-t_m)^{3/2}/\epsilon} \big[ z(0) e^{-4(-t_0+\Delta t)^{3/2}/\epsilon}+\int_{t_0}^{t_m}e^{-4(-t)^{3/2}/\epsilon}\frac{1}{2\sqrt{-t}}~dt \big]\notag\\
        &\geq\frac{1}{2} e^{4(-t_m)^{3/2}/\epsilon} \big[ \epsilon(2\alpha)e^{-4(-t_0)^{3/2}/\epsilon}+\int_{t_0}^{t_m}|t|^{-1/2}e^{-4(-t)^{3/2}/\epsilon}~dt \big].\label{ineq:main1_proof5}
\end{align}
The last inequality uses $z(0)=\alpha\epsilon$ and a similar argument as in inequality (\ref{ineq:main1_proof4}).
To proceed further, we need the following lemma by
Berglund and Gentz~\cite{Berglund:2006}.
\begin{lemma}\label{l:main2}
Fix constants $y_0<0$, and $\xi_0$, $c$, $p$, $q>0$. Then the function
$$
    \xi(y,\epsilon) = e^{c|y|^{p+1}/\epsilon}\left[ \epsilon\xi_0e^{-c|y_0|^{p+1}/\epsilon}+\int_{y_0}^y |u|^{q-1}e^{-c|u|^{p+1}/\epsilon}~du \right]
$$
satisfies
$$
    \xi(y,\epsilon)\asymp \epsilon|y|^{q-p-1} \quad 
    \forall\,y_0 \leq y \leq -\epsilon^{1/(p+1)}.
$$
\end{lemma}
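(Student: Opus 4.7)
The plan is to rescale the integral to a scale-free form and then extract its order of magnitude by integration by parts, recognizing the expression as a Laplace-type integral peaked at the right endpoint $u=y$ (where $|u|$ is smallest and hence $e^{-c|u|^{p+1}/\epsilon}$ is largest). Introducing $s=|u|\epsilon^{-1/(p+1)}$, so that $|u|^{p+1}/\epsilon = s^{p+1}$, one finds
\begin{equation*}
 \int_{y_0}^{y} |u|^{q-1} e^{-c|u|^{p+1}/\epsilon}\, du = \epsilon^{q/(p+1)} \int_{\sigma}^{\sigma_0} s^{q-1} e^{-cs^{p+1}}\, ds,
\end{equation*}
with $\sigma := |y|\epsilon^{-1/(p+1)}\geq 1$ and $\sigma_0 := |y_0|\epsilon^{-1/(p+1)}\to\infty$ as $\epsilon\to 0$. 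Since $\epsilon|y|^{q-p-1} = \epsilon^{q/(p+1)}\sigma^{q-p-1}$, the lemma reduces to the uniform two-sided bound
\begin{equation*}
 \xi_0 e^{-c(\sigma_0^{p+1}-\sigma^{p+1})} + e^{c\sigma^{p+1}}\int_{\sigma}^{\sigma_0} s^{q-1} e^{-cs^{p+1}}\, ds \asymp \sigma^{q-p-1}, \qquad \sigma\in[1,\sigma_0].
\end{equation*}

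For the main integral I would exploit the identity $s^p e^{-cs^{p+1}} = -\frac{1}{c(p+1)}\frac{d}{ds}e^{-cs^{p+1}}$ and integrate by parts with the factorization $s^{q-1}=s^{q-1-p}\cdot s^p$, obtaining
\begin{equation*}
 e^{c\sigma^{p+1}}\int_{\sigma}^{\sigma_0} s^{q-1} e^{-cs^{p+1}}\,ds = \frac{\sigma^{q-1-p}}{c(p+1)} - \frac{\sigma_0^{q-1-p}}{c(p+1)} e^{-c(\sigma_0^{p+1}-\sigma^{p+1})} + \frac{q-1-p}{c(p+1)}\, e^{c\sigma^{p+1}}\int_{\sigma}^{\sigma_0} s^{q-2-p}e^{-cs^{p+1}}\,ds.
\end{equation*}
The boundary term at $\sigma_0$, combined with the initial-data contribution $\xi_0 e^{-c(\sigma_0^{p+1}-\sigma^{p+1})}$, is of order $e^{-c(\sigma_0^{p+1}-\sigma^{p+1})}$: for $\sigma\leq\sigma_0/2$ this is exponentially small in $1/\epsilon$, whereas for $\sigma\geq\sigma_0/2$ a direct sup-bound on the integrand of the original expression shows these contributions are $O(\sigma^{q-p-1})$. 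The remainder integral obeys the same structure with $q$ replaced by $q-(p+1)$, so iterating the integration by parts $N$ times yields an asymptotic expansion whose successive terms shrink by a factor of $\sigma^{-(p+1)}\leq 1$ on the range $\sigma\geq 1$, with an arbitrarily small tail error for $N$ large enough.

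Summing the estimates, the leading positive term $\frac{\sigma^{q-1-p}}{c(p+1)}$ dominates for $\sigma$ large, yielding both the upper bound and strict positivity in that regime. For $\sigma=O(1)$ one cannot appeal to asymptotic smallness of the remainder, but the function $I(\sigma):=e^{c\sigma^{p+1}}\int_\sigma^{\sigma_0} s^{q-1}e^{-cs^{p+1}}\,ds$ converges, as $\sigma_0\to\infty$, to the continuous positive function $e^{c\sigma^{p+1}}\int_\sigma^\infty s^{q-1}e^{-cs^{p+1}}\,ds$, so by compactness $I(\sigma)/\sigma^{q-p-1}$ is pinched between two positive constants on any fixed interval $[1,R]$; splicing with the asymptotic estimate for $\sigma\geq R$ yields the desired two-sided $\asymp$ bound. \textbf{The main obstacle} is achieving uniformity in $\epsilon$ across the full range, especially near $|y|\asymp\epsilon^{1/(p+1)}$ (i.e.\ $\sigma\asymp 1$), where the remainder is not automatically small and the rigorous lower bound must come from a compactness/limiting argument rather than a direct asymptotic one; this is where the two-sided $\asymp$ character of the conclusion (as opposed to a one-sided leading-order asymptotic) becomes delicate.
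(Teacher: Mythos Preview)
The paper does not prove this lemma; it is quoted from Berglund and Gentz, so there is no in-paper argument to compare against. Your approach---rescaling to $\sigma=|y|\epsilon^{-1/(p+1)}$ and then treating the resulting Laplace integral by integration by parts, supplemented by a compactness argument for $\sigma$ of order one---is the standard one and is essentially what one finds in that reference.

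Two points need tightening. First, the reduction is miscomputed: after dividing $\xi$ by $\epsilon^{q/(p+1)}$ the initial-data contribution becomes $\epsilon^{\,1-q/(p+1)}\xi_0\, e^{-c(\sigma_0^{p+1}-\sigma^{p+1})}$, not $\xi_0\, e^{-c(\sigma_0^{p+1}-\sigma^{p+1})}$, and the missing power of $\epsilon$ matters (in the paper's application $p=q=\tfrac12$ it is $\epsilon^{2/3}$); without it your displayed reduced claim is actually false at $\sigma=\sigma_0$ whenever $q\ne p+1$. Second, and relatedly, your lower-bound argument covers $\sigma\in[1,R]$ by compactness and $\sigma\ge R$ by the asymptotic expansion of the integral, but the integral $\int_\sigma^{\sigma_0}(\cdot)$ tends to zero as $\sigma\to\sigma_0$, so on the stretch near $\sigma_0$ the lower bound cannot come from the integral at all---it must come from the initial-data term, which your splicing does not address. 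The repair is immediate once the correct factor is restored: since $\epsilon^{\,1-q/(p+1)}=|y_0|^{\,p+1-q}\sigma_0^{\,q-p-1}$, the initial-data term is itself $\asymp\sigma^{\,q-p-1}$ for $\sigma$ comparable to $\sigma_0$, and this patches the gap. With these two corrections your outline goes through.
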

Now, by applying Lemma \ref{l:main2} (with $p=q=1/2$, $y_0=-t_0$, $y=t_m$, $\xi_0=2\alpha$, $c=4$) to the right-hand side of the inequality (\ref{ineq:main1_proof5}), we can conclude that there exists a positive constant $C>0$ such that
\begin{equation}\label{ineq:main1_lowerbound}
    z(m) \geq C\frac{\epsilon}{|t_m|}
    \quad \forall m \leq m_1.
\end{equation}
This gives a lower bound of the solution $z(m)$ of problem (\ref{e:main_convert}).
Returning to the variable $(x(m),t_m)$, a combination of inequalities (\ref{ineq:main1_upperbound}) and (\ref{ineq:main1_lowerbound}) gives 
$$
    C\frac{\epsilon}{|t_m|} \leq x(m)-\sqrt{-t_m} \leq K\frac{\epsilon}{|t_m|} \quad \forall\; m\leq m_0,
$$
which proves (\ref{t:main1_temp30}).
\subsection{Corner layer: Proof of (\ref{t:main1_temp60})}

Now, we turn to the proof of 
(\ref{t:main1_temp60}).
For this, we study the evolution of the solution of problem~(\ref{e:main}) for 
time larger than $t_{m_1}$.
To proceed, 
note that 
$$
t_{m_1} = -c_1^*\epsilon^{2/3} 
\;\, \text{for some $c_1^*\geq c_1$ with $c_1^*=O(1)$}.
$$
Using this, and the inequalities (\ref{ineq:main1_upperbound}) and (\ref{ineq:main1_lowerbound}),
it follows that 
the value $x(m_1)$ of the solution $x$
of problem~(\ref{e:main}) is given by 
$$
    x(m_1) = K^*\epsilon^{1/3}
$$
for some positive constant $K^*$ satisfying
\begin{equation}\label{ineq:main1_K*}
    \big( \frac{C}{c_1^*}+\sqrt{c_1^*} \big) \leq K^* \leq \big( \frac{K}{c_1^*}+\sqrt{c_1^*} \big).
\end{equation}
Therefore, we consider the following initial value problem
\begin{subnumcases}
{\label{e:main2}}
    x(m+1)=-\frac{\Delta t}{\epsilon}x(m)^2+x(m)-\frac{\Delta t}{\epsilon}t_m, &{\label{e:main2a}}\\
    x(m_1)=K^*\epsilon^{1/3}. &{\label{e:main2b}}
\end{subnumcases}
Now, make the change of variables
$$
    x(m)=\epsilon^{1/3} y(m),\;\, t_m=\epsilon^{2/3} s_{m},\;\, \Delta t=\epsilon^{2/3} \Delta s
$$
Then, problem (\ref{e:main2}) is converted into
\begin{subnumcases}
{\label{e:main2_convert}}
    y(m+1)=-(\Delta s) y(m)^2+y(m)-(\Delta s) s_m, \; m \geq m_1 &{\label{e:main2_converta}}\\
    y(m_1)=K^*. &{\label{e:main2_convertb}}
\end{subnumcases}
Note that
$$
    s_{m} = -c_1^*+(m-m_1)\Delta s \quad \forall m \geq m_1.
$$
Recall that $t_m \leq 0$ for $m\leq m_0$, and hence that 
$$
  s_m\leq 0 \;\text{ for }\; m\leq m_0.
$$  

\noindent
{\bf Decreasing property of $\{y(m)\}_{m=m_1}^{m_0}$.}

We will show that $y(m) > 0$ for $m\leq m_0$.
To do this, we will first show that $\{y(m)\}$ is decreasing for $m_1 \leq m\leq m_0$ whenever 
$$ 
    \Delta s < 1/[2K/(c_1^*+\sqrt{c_1^*})],
$$ 
which is equivalent to 
\begin{equation}\label{e:main_ds}
\Delta t/\epsilon < \epsilon^{-1/3}/[2K/(c_1^*+\sqrt{c_1^*})].
\end{equation}
We remark that the $\epsilon_0$ in the statement of Theorem~\ref{t:main1} is chosen so that (\ref{e:main_ds}) holds.
This assertion can be proved by induction on $m$.
To proceed, we first note that 
\begin{equation}\label{ineq:main1_proof6}
    1-2(\Delta s)y(m_1)=1-2(\Delta s)K^*>1-2\frac{1}{2K/(c_1^*+\sqrt{c_1^*})}K^* \geq 0.
\end{equation}
Now, when $m=m_1$, by (\ref{e:main2_converta}) and 
the leftmost part of inequality (\ref{ineq:main1_K*}), 
$$
    y(m_1+1)-y(m_1)=-\Delta s(y(m_1)^2+s_{m_1})=-\Delta s((K^*)^2-c_1^*) < 0,
$$
and so the assertion holds for $m=m_1$.
Assume that the assertion holds for $m=k$, 
that is $y(k+1)-y(k)=-\Delta s(y(k)^2+s_k)\leq 0$.
When $m=k+1$,
\begin{eqnarray*}
    y(k+2)-y(k+1)
        &=& -\Delta s(y(k+1)^2+s_{k+1})\\
        &=& -\Delta s\big[ \big(y(k)-\Delta s(y(k)^2+s_k)\big)^2+s_{k+1} \big]\\
        &=& -\Delta s\big[ y(k)^2-2\Delta s(y(k)^2+s_k)y(k)+\Delta s^2(y(k)^2+s_k)^2+s_{k+1} \big]\\
        &\leq& -\Delta s \big[ y(k)^2-2\Delta s(y(k)^2+s_k)y(k)+s_k \big]\\
        && (\text{since $s_{k+1}\geq s_k$})\\
        &=& -\Delta s \big[ \big(y(k)^2+s_k\big)\big(1-2(\Delta s) y(k)\big) \big]\\
        &<& 0, \hspace{0.4cm} \big(\text{since $y(k)^2+s_k \geq 0$, $y(k)\leq y(m_1)$ and inequality (\ref{ineq:main1_proof6})\big)}
\end{eqnarray*}
and so the assertion holds for $m=k+1$.
Thus, by induction, $\{y(m)\}$ is decreasing for $m_1 \leq m\leq m_0$.

\medskip
\noindent
{\bf Positivity of $\{y(m)\}_{m=m_1}^{m_0}$}.

With the aid of the decreasing property of $\{y(m)\}_{m=m_1}^{m_0}$,
we are ready to prove that
\begin{equation}\label{e:main2_y2}
    y(m)>0 \quad \text{for}\; m_1\leq m\leq m_0.
\end{equation}
This can be shown by induction on $m$.
Indeed, when $m=m_1$, $y(m_1)=K^*>0$, and so the assertion holds for $m=m_1$.
Assume that the assertion holds for $m=k$, that is, $y(k)>0$.
Now, for $m=k+1$,
\begin{eqnarray*}
    y(k+1) 
    &=& -\Delta s y(k)^2+y(k)-(\Delta s)s_k\\
    &=& \big( 1-(\Delta s)y(k) \big)y(k)-(\Delta s)s_k\\
    &\geq& \big( 1-(2\Delta s)y(m_1) \big)y(k)-(\Delta s)s_k\\
    &&\big( \text{since $\{ y(m) \}_{m=m_1}^{m_0}$ is decreasing and $y(k)$ is positive} \big)\\
    &>& 0, \hspace{0.4cm} \big( \text{using the fact that $y(k)>0$ and $s_k < 0$, and inequality (\ref{ineq:main1_proof6})} \big)
\end{eqnarray*}
and so the assertion holds for $m=k+1$.
Hence, by induction, $y(m)>0$ for $m_1\leq m\leq m_0$.

\medskip
\noindent
{\bf Decreasing property of $\{y(m)\}_{m=m_0}^{\infty}$.}

Next, we will show that
$y(m)$ is decreasing for $m > m_0$, and becomes negative for $m > m_2$ and for some $m_2 > m_0$.
Indeed, since $s_m > 0$ for $m > m_0$, we have 
\begin{eqnarray*}
    y(m+1)-y(m) 
    &=& -(\Delta s) y(m)^2-(\Delta s) s_m\\
    &<& -(\Delta s) s_{m_0+1} \quad \forall m > m_0.
\end{eqnarray*}
Together with the fact that $y(m_0)>0$ and $(\Delta s) s_{m_0+1}$ is a positive constant,
we can conclude that 
there exists an integer $m_2>m_0$ such that $y(m)>0$ for $m_0<m<m_2$, $y(m_2) \geq 0$ 
and $y(m)<0$ for $m > m_2$. 
Set 
\begin{equation} \label{e:c2_temp20}
c_2 = s_{m_2}.
\end{equation}
In Lemma~\ref{e:c_2}, we will establish that the magnitude of $c_2$ is of order $O(1)$.
Then with the use of this fact and returning to the origin variables $(t_m,x(m))$, we conclude that $x(m)\asymp \epsilon^{1/3}$ for $m_0 \le m \le m_2$,
and so for $-c_1\epsilon^{2/3}\leq t_m \leq c_2\epsilon^{2/3}$.
This completes the proof of (\ref{t:main1_temp60}).

\medskip

\begin{lemma} \label{e:c_2}
    $c_2$ is a positive constant of order $O(1)$.
\end{lemma}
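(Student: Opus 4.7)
My plan is to prove the two parts separately: (i)~positivity $c_2>0$, and (ii)~boundedness $c_2\le C$ for some constant $C$ independent of $\epsilon$. The positivity is almost immediate: by the construction preceding~(\ref{e:c2_temp20}), $m_2>m_0$, so $m_2\ge m_0+1$, and since $s_j>0$ for every $j>m_0$ (by the definition of $m_0$), we get $c_2=s_{m_2}\ge s_{m_0+1}>0$. The real content is the $O(1)$ upper bound.

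For (ii), the strategy is to telescope the recurrence~(\ref{e:main2_converta}) over $j\in\{m_0+1,\dots,m_2-1\}$, on which the paper has already shown $y(j)>0$ (from the paragraph just preceding~(\ref{e:c2_temp20})) and $s_j>0$. Rewriting $y(j+1)-y(j)=-\Delta s\,(y(j)^2+s_j)\le -\Delta s\, s_j$ and summing, then using $y(m_2)\ge 0$, yields
\[
    y(m_0+1)\;\ge\;\Delta s\!\!\sum_{j=m_0+1}^{m_2-1}\!s_j.
\]
The left-hand side is $O(1)$: by the decreasing property of $\{y(m)\}$ on $\{m_1,\dots,m_0\}$ (established just before~(\ref{e:main2_y2})), together with the fact that the first step $m_0\to m_0+1$ is also decreasing since $s_{m_0+1}>0$, and using~(\ref{ineq:main1_K*}), one has $y(m_0+1)\le y(m_1)=K^*=O(1)$.

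The main work is to bound the right-hand side from below by a positive multiple of $c_2^2$. I would exploit that $\{s_j\}_{j=m_0+1}^{m_2-1}$ is an arithmetic progression with common difference $\Delta s$, starting in $(0,\Delta s]$ and ending at $s_{m_2-1}=c_2-\Delta s$. Provided $c_2\ge 2\Delta s$, at least $\lfloor c_2/(2\Delta s)\rfloor$ of these indices satisfy $s_j\ge c_2/2$, and each such term contributes at least $c_2/2$, giving $\Delta s\sum_{j=m_0+1}^{m_2-1}s_j\ge c_2^2/4$ up to negligible integer-part corrections. Combining the two bounds yields $c_2\le 2\sqrt{K^*}=O(1)$. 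In the complementary regime $c_2<2\Delta s\le 2\delta_0\,\epsilon^{1/3}$, the conclusion $c_2=O(\epsilon^{1/3})=O(1)$ is automatic. I expect the main technical obstacle to be precisely this case split together with the integer-part bookkeeping in the counting argument; no sharper tool than the elementary telescoping seems to be required.
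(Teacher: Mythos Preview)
Your upper-bound argument is correct and is essentially the paper's own: both telescope $y(j+1)-y(j)=-\Delta s\,(y(j)^2+s_j)\le -\Delta s\,s_j$ over $j>m_0$ and sum the resulting arithmetic progression. The paper packages this via an explicit cutoff index $m_2^+=\lceil\sqrt{2y(m_0+1)}/\Delta s+m_0+2\rceil$ beyond which $y$ is forced negative; you instead set $m=m_2$, use $y(m_2)\ge 0$, and solve for $c_2$. Either way one arrives at $c_2\lesssim\sqrt{y(m_0+1)}$. (A small quibble: the step $m_0\to m_0+1$ involves $s_{m_0}\le 0$, not $s_{m_0+1}$, in the recurrence, so ``since $s_{m_0+1}>0$'' is not the right justification; the conclusion $y(m_0+1)\le y(m_1)=K^*$ is nonetheless correct and is contained in the induction preceding~(\ref{e:main2_y2}).)

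The genuine gap is in part~(i). In this paper ``a positive constant of order $O(1)$'' is meant two-sidedly: $c_2$ must be bounded above \emph{and} below by $\epsilon$-independent positive constants, so that Theorem~\ref{t:main1} may treat $c_2$ as a fixed chosen constant and the corner layer genuinely has width $\asymp\epsilon^{2/3}$. Your bound $c_2\ge s_{m_0+1}$ does not achieve this, since $0<s_{m_0+1}\le\Delta s\le\delta_0\,\epsilon^{1/3}\to 0$ as $\epsilon\to 0$. The paper supplies the missing half by a reversed telescoping bound: on the already-obtained window $(m_0,m_2^+)$ one has the uniform estimate $y(j)^2+s_j\le y(m_0+1)^2+s_{m_2^+}=:L$, whence
\[
    y(m)\;>\;y(m_0+1)-L\,(m-m_0-1)\,\Delta s,
\]
which stays positive until $s_m$ exceeds $s_{m_0+1}+y(m_0+1)/L$, a quantity of order~$1$. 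This produces an index $m_2^-\le m_2$ with $s_{m_2^-}$ bounded below uniformly in $\epsilon$, and hence $c_2\ge s_{m_2^-}$ is of order~$1$ from below as well. Without this step you have not ruled out $t_{\mt}=o(\epsilon^{2/3})$, which would make the positive half of the interval in~(\ref{t:main1_temp60}) vacuous.
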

\begin{proof}
First, we give an upper bound for $m_2$. 
Now, for $m>m_0$, $y(m)$ can be estimated as follows: 
\begin{eqnarray*}
    y(m)
    &=& y(m_0+1)+\sum_{i=m_0+1}^{m-1}\big( y(i+1)-y(i) \big)\\
    &<& y(m_0+1)+\sum_{i=m_0+1}^{m-1}\big( -(\Delta s)s_i \big)\\
    &=& y(m_0+1)-\Delta s\sum_{i=m_0+1}^{m-1}\big(  s_{m_0+1}+(i-m_0-1)\Delta s \big)\\
    &<& y(m_0+1)-\frac{(m-m_0-2)(m-m_0-1)}{2}(\Delta s)^2.
\end{eqnarray*}
Define $m_2^+$ by 
\begin{equation*}
    m_2^+:=\left\lceil \frac{\sqrt{2y(m_0+1)}}{\Delta s}+m_0+2 \right\rceil.
\end{equation*}
Therefore, if $m \geq m_2^+$, 
we have 
\begin{eqnarray*}
    y(m)
    &<& y(m_0+1)-\left(\frac{\sqrt{2y(m_0+1)}}{\Delta s}\right)\left(\frac{\sqrt{2y(m_0+1)}}{\Delta s}+1\right)\frac{(\Delta s)^2}{2}\\
    &<& y(m_0+1)-\left(\frac{\sqrt{2y(m_0+1)}}{\Delta s}\right)^2\frac{(\Delta s)^2}{2}\\
    &=& 0.
\end{eqnarray*}
Note that $s_{m_2^+}$ satisfies
\begin{eqnarray*}
    s_{m_2^+}
    &=& -c_1^*+(m_2^+-m_1)\Delta s\\
    &<& -c_1^*+\left( \frac{\sqrt{2y(m_0+1)}}{\Delta s}+m_0+3-m_1 \right)\Delta s\\
    &=& -c_1^*+(m_0+3-m_1)\Delta s+\sqrt{2y(m_0+1)}\\
    &=& s_{m_0+3}+\sqrt{2y(m_0+1)}.
\end{eqnarray*}
Note that $y(m_0+1)$ is a positive constant of order $O(1)$, and that $\Delta s$ is at most of order $O(\epsilon^{1/3})$ and $0 < s_{m_0+3}=s_{m_0}+3\Delta s<3\Delta s$ since $s_{m_0} < 0$. Taken together, it follows from the above inequality that $s_{m_2^+}$ is at most of order $O(1)$. Since $y(m)$ is decreasing, $y(m_2^+)<0$, and $y(m_2)\geq 0$, we thus have  $m_2<m_2^+$.

Next, we give a lower bound for $m_2$. For $m_0<m<m_2^+$, $y(m)$ can be estimated as follows:
\begin{eqnarray*}
    y(m)
    &=& y(m_0+1)+\sum_{i=m_0+1}^{m-1}\big( y(i+1)-y(i) \big)\\
    &=& y(m_0+1)-\Delta s\sum_{i=m_0+1}^{m-1}\big( y(i)^2+s_i \big)\\
    &>& y(m_0+1)-\Delta s\sum_{i=m_0+1}^{m-1}\big( y(m_0+1)^2+s_{m_2^+} \big)\\
    & &(\text{since $\{ y(i) \}$ is decreasing and $s_i<s_{m_2^+}$})\\
    &=& y(m_0+1)-(y(m_0+1)^2+s_{m_2^+})(m-m_0-1)\Delta s.
\end{eqnarray*}
Define $m_2^-$ by
\begin{equation*}
    m_2^-:= \left\lfloor \frac{y(m_0+1)}{(y(m_0+1)^2+s_{m_2^+})\Delta s}+m_0+1 \right\rfloor.
\end{equation*}
It then follows from the above inequality that
\begin{eqnarray*}
    y(m)
    &>& y(m_0+1)-(y(m_0+1)^2+s_{m_2^+})\frac{y(m_0+1)}{(y(m_0+1)^2+s_{m_2^+})\Delta s}\Delta s\\
    &=& 0 \quad \forall m\leq m_2^-.
\end{eqnarray*}
In particular, $y(m_2^-) > 0$.
Now, $s_{m_2^-}$ satisfies
\begin{eqnarray*}
    s_{m_2^-}
    &=& -c_1^*+(m_2^--m_1)\Delta s\\
    &>& -c_1^*+\left( \frac{y(m_0+1)}{(y(m_0+1)^2+s_{m_2^+})\Delta s}+m_0+1-m_1 \right)\Delta s\\
    &=& -c_1^*+(m_0+1-m_1)\Delta s+\frac{y(m_0+1)}{y(m_0+1)^2+s_{m_2^+}}\\
    &=& s_{m_0+1}+\frac{y(m_0+1)}{y(m_0+1)^2+s_{m_2^+}}.
\end{eqnarray*}
Note that $y(m_0+1)$ is a positive constant of order $O(1)$, and that $s_{m_2^+}$ is a positive constant of at most order $O(1)$.
Taken together, it follows from the above inequality that $s_{m_2^-}>0$ is at least of order $O(1)$. Since $y(m)$ is decreasing, $y(m_2^-)>0$, and $m_2$ is the largest integer such that $y(m)\geq 0$, 
we can thus deduce $m_2\geq m_2^-$.

Taken together, we have that $s_{m_2^-} \leq s_{m_2} \leq s_{m_2^+}$.
Since $s_{m_2^-}$ is a constant of at least order $O(1)$ and $s_{m_2^+}$ is a constant of at most order $O(1)$, we can conclude that $c_2=s_{m_2}$ is a constant of order $O(1)$. The proof is thus completed.
\end{proof}

\subsection{Beyond corner layer: Behavior of $y(m)$ for $m$ such that $t_m\geq c_2\epsilon^{2/3}$}

To proceed, as in the last section, we consider the variables $(y(m),s_m)$. 
Equation (\ref{e:main2_converta}) gives that $y(m+1)-y(m)\leq -(\Delta s)s_m$. Hence, we have the following estimate.
\begin{eqnarray*}
    y(m_2+1+n)&\leq& y(m_2+1)-\Delta s \sum_{i=m_2+1}^{m_2+1+n} s_i\\
        &=& y(m_2+1)-\Delta s \sum_{i=0}^{n}(s_{m_2+1}+i\Delta s)\\
        &=& y(m_2+1)-(n+1)(\Delta s )s_{m_2+1}-\frac{n(n+1)}{2}(\Delta s)^2
\end{eqnarray*}
This shows that $\{ x(m) \}_{m=m_2+1}^\infty$ is decreasing and approaches negative infinity
as $m\to\infty$.

\subsection{Dependence of tipping time on the sweep rate $\epsilon$}

Theorem \ref{t:main1} depicts the nature of solutions to problem~(\ref{e:main}). Due to the critical transition of the system, we are particularly interested in predicting the tipping time $t_{\mt}$. Theorem \ref{t:main1} showed that $t_{\mt}=C\epsilon^{2/3}$ for some positive constant $C$. We would like to further understand how $C$ will change in response to variations in $\epsilon$ and $\Delta t$. Here, we consider problem~(\ref{e:main}) with initial conditions $t_0=-1$ and $x(0)=\sqrt{-t_0}+\epsilon$, i.e., 
\begin{equation}\label{e:tipping}
    \begin{cases}
        x(m+1)=-\frac{\Delta t}{\epsilon}x(m)^2+x(m)-\frac{\Delta t}{\epsilon}t_m, &\\
        x(0)= \sqrt{-t_0}+\epsilon,  & \\
        t_0 = -1, &
    \end{cases}
\end{equation}
where $\epsilon$ and $\Delta t$ will be specified later.
Note that the tipping time $t_{\mt}$ is negative for this case due to Theorem \ref{t:main1}.
Now, we fix the ratio $\Delta t/\epsilon$ to be $\ln(2)/6$ and use the solution of problem~(\ref{e:tipping}) to compute the tipping time $t_{\mt}$ for various $\epsilon$.
Then the obtained pairs $(\log(\epsilon), \log(t_{\mt}))$ are depicted as markers in
 Figure \ref{fig:time_order}.
As indicated in Figure \ref{fig:time_order}, $\log(t_{\mt})$ seems to be a linear function of $\log(\epsilon)$ with slope $0.6667$ and $y$-axis intercept $C_0 \approx 0.0232$.
We can thus conclude that $t_{\mt} = e^{C_0} \epsilon^{2/3}$ for small $\epsilon$, and so the crucial constant $C$ given in Theorem \ref{t:main1} is conjectured to be equal to $e^{C_0}$.

Next, we wonder whether this property still holds when $\Delta t/\epsilon$ takes on different values and even when $\Delta t=o(\epsilon)$. 
Some numerical attempts with $\Delta t = O(\epsilon^\alpha)$ and $\alpha \ge 1$ are summarized in Table \ref{table:tippingtime_const}.
It turns out that as long as $\Delta t/\epsilon$ satisfies the condition $\Delta t/\epsilon\in (0,\delta_0)$, $C$ tends to a constant as $\epsilon \to 0^+$. 


\begin{figure}[ht]
    \centering
    \begin{subfigure}[b]{0.49\textwidth}
        \centering
        \includegraphics[width=\textwidth]{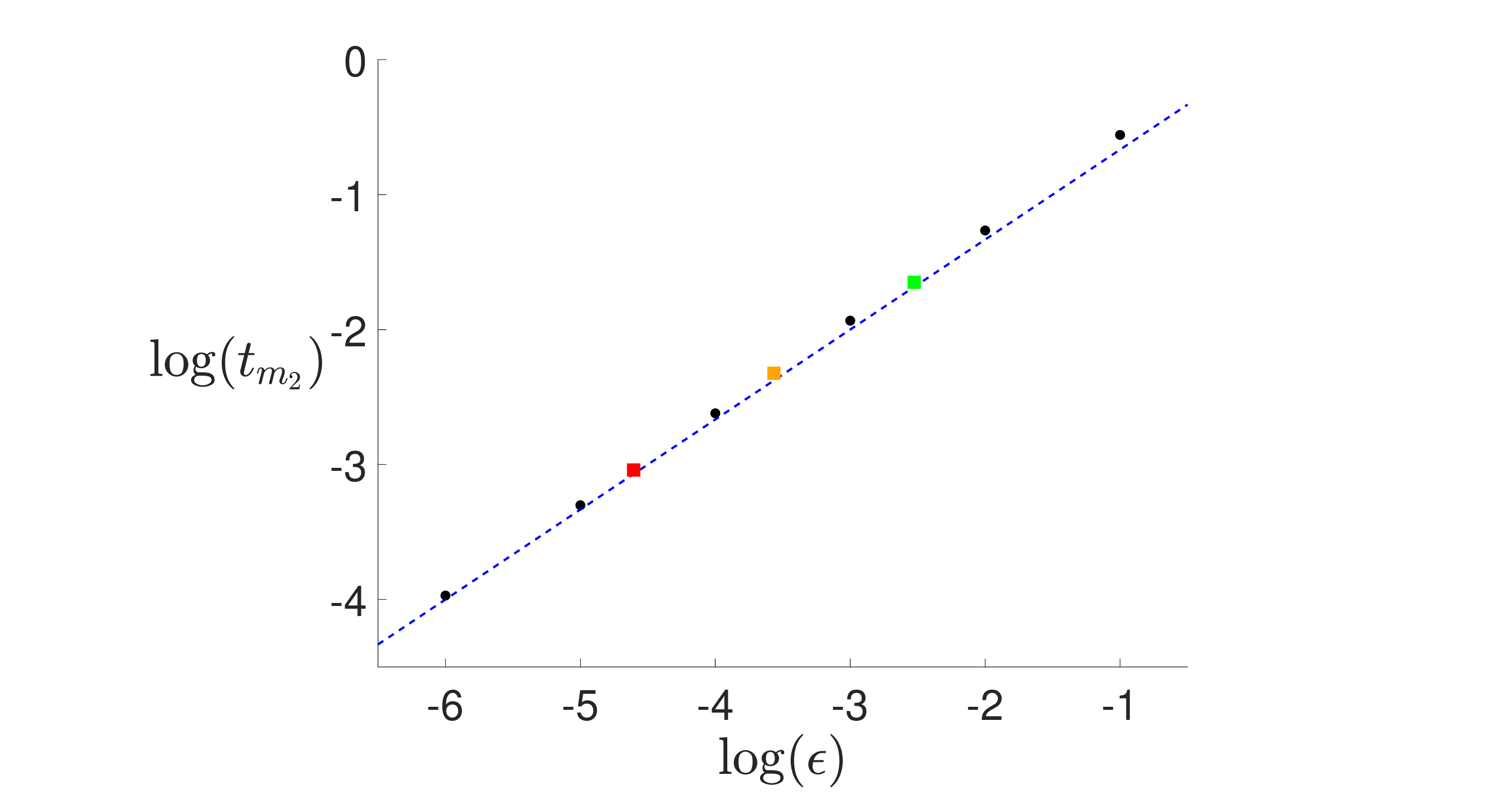}
        \caption{\label{fig:time_order}}
    \end{subfigure}
    \hfill
    \begin{subfigure}[b]{0.49\textwidth}
        \centering
        \includegraphics[width=\textwidth]{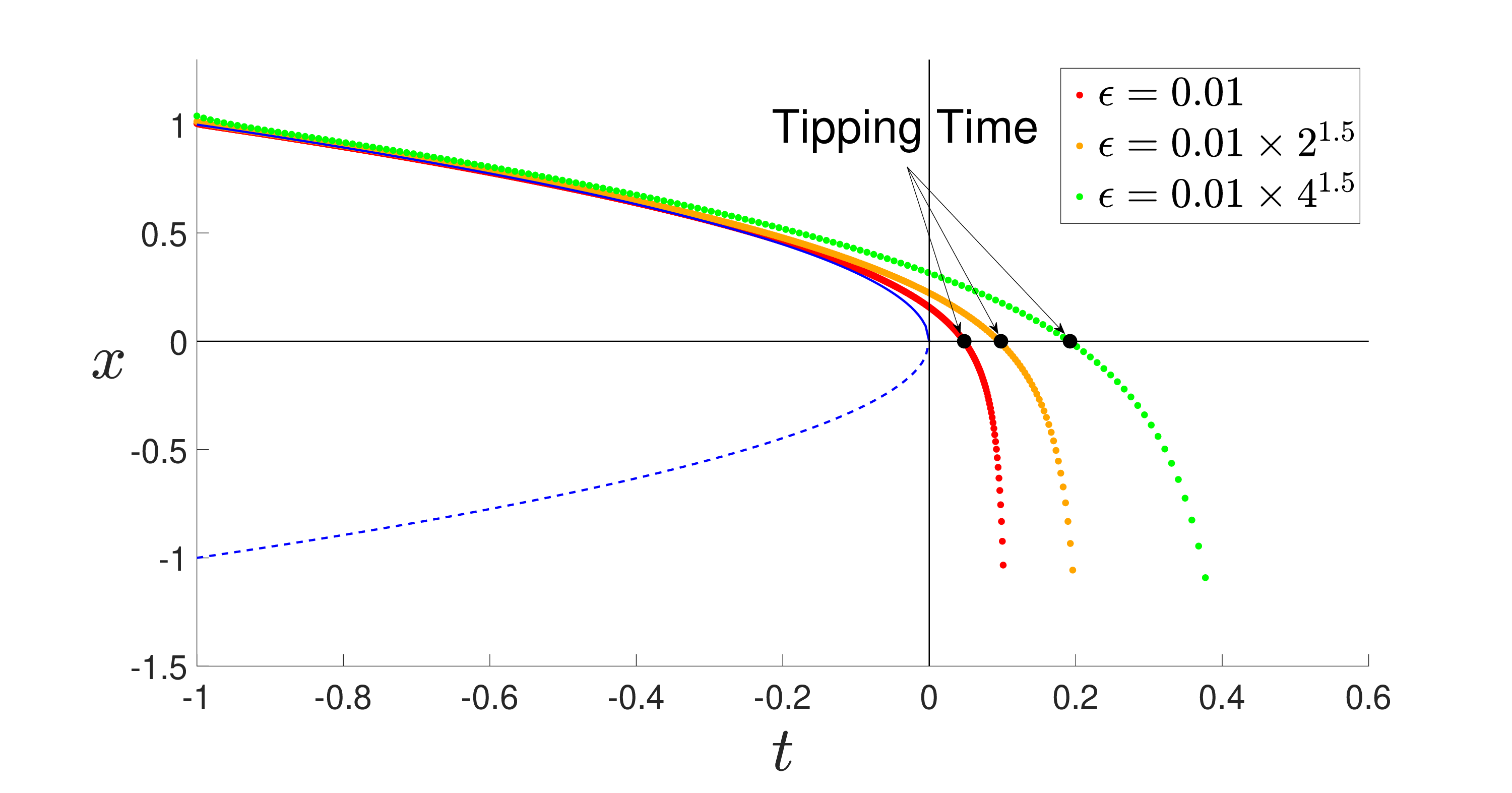}
        \caption{\label{fig:multi_sol}}
    \end{subfigure}
    \caption{
    (a). The dependence of tipping time $t_{\mt}$ on the sweep rate parameter $\epsilon$ ($\epsilon=0.01$ (red marker), $\epsilon=0.01\times 2^{1.5}$ (orange marker), $\epsilon=\epsilon=0.01\times 4^{1.5}$ (green marker)). The blue dashed line is $\log(t)= (2/3) \log(\epsilon)$. 
    (b). Solution profiles corresponding to the red, the orange, and the green markers in panel (a), respectively. 
    Here $\Delta t/\epsilon$ is fixed to be $\ln(2)/6$.
    }
\end{figure}


 \begin{table}[ht!]
    \centering
    \begin{tabular}{ |l|c c c c c c| } 
         \hline
        \quad $t_{\mt}/\epsilon^{2/3}$ & $\epsilon=1$ &  $\epsilon=10^{-1}$ & $\epsilon=10^{-2}$ & $\epsilon=10^{-3}$ & $\epsilon=10^{-4}$ & $\epsilon=10^{-5}$\\ 
        \hline
        $\Delta t=C\epsilon$ & 
        1.0794 & 1.0423 & 1.0300 & 1.0262 & 1.0204 & 1.0204\\
        $\Delta t=(3C/4)\epsilon$ & 
        1.0794 & 1.0289 & 1.0238 & 1.0262 & 1.0231 & 1.0210\\
        $\Delta t=(C/2)\epsilon$ & 
        1.0794 & 1.0423 & 1.0300 & 1.0204 & 1.0204 & 1.0191\\
        $\Delta t=(C/4)\epsilon$ & 
        1.0794 & 1.0289 & 1.0238 & 1.0204 & 1.0204 & 1.0191\\
        $\Delta t=C\epsilon^{1.5}$ & 
        1.0794 & 1.0389 & 1.0200 & 1.0192 & 1.0189 & 1.0188\\
        $\Delta t=C\epsilon^2$ & 
        1.0794 & 1.0209 & 1.0190 & 1.0188 & 1.0188 & 1.0188\\
        $\Delta t=C\epsilon^{2.5}$ & 
        1.0794 & 1.0202 & 1.0188 & 1.0188 & 1.0188 & 1.0188\\
        \hline
        \end{tabular}
    \caption{
    The ratio $t_{\mt}/\epsilon^{2/3}$ for various $\Delta t$ and $\epsilon$.
 The numerical result indicates that $c_2=t_{\mt}/\epsilon^{2/3}$ approaches to the same constant ($\approx 1.0188$) as $\epsilon$ tends to 0. 
 Here the constant $C$ is chosen to be $\ln(2)/6\approx 0.1155$ to satisfy the constraint given in Theorem \ref{t:main1}.
 }
    \label{table:tippingtime_const}
\end{table}



\section{Solution behavior when $\epsilon/\Delta t = o(1)$} \label{sec:negative tipping time}
\setcounter{equation}{0}

In the last section, we have provided a detailed description of the properties of $x(m)$ under the condition $\epsilon/\Delta t=O(1)$, or more precisely, when $\Delta t/\epsilon\in(0,\delta_0)$. 
In this section, we proceed to consider the scenario when $\epsilon/\Delta t = o(1)$. In this scenario, $x(m)$ may be negative. However, in the context of most realistic systems, the solution $\{x(m)\}_{m\in\bN}$ is non-negative. Therefore, we restrict our consideration to such a case throughout this section.

\subsection{Negative tipping time}

Now we would like to know under which conditions $x(1)$ becomes the tipping point. 
This question can be addressed by the following lemma which is the assertion (a) of Theorem~\ref{t:main2}.
\begin{lemma}[\bf Tipping at $m=1$]
    Let $\{ x(m) \}_{m\in\mathbb{N}}$ be the solution of problem~(\ref{e:main}).
    Suppose $\Delta t>1/\alpha$. 
     Then $x(m)<-\sqrt{-t_m}$ at $m=1$,
     that is, $x(1)$ is the tipping point.
\end{lemma}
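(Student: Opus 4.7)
The plan is to directly compute $x(1)$ from the recurrence~(\ref{e:maina}) and verify the tipping inequality by hand, exploiting the specific form of the initial condition $x(0) = \sqrt{-t_0} + \alpha\epsilon$. First, I would observe the algebraic identity
$$
   x(0)^2 + t_0 \;=\; \bigl(x(0) - \sqrt{-t_0}\bigr)\bigl(x(0) + \sqrt{-t_0}\bigr) \;=\; \alpha\epsilon\bigl(x(0) + \sqrt{-t_0}\bigr),
$$
which removes the explicit $1/\epsilon$ factor from the recurrence. Plugging this into $x(1) = x(0) - \tfrac{\Delta t}{\epsilon}\bigl(x(0)^2 + t_0\bigr)$ and then inserting $x(0) = \sqrt{-t_0} + \alpha\epsilon$, a short computation gives the clean closed form
$$
   x(1) \;=\; \sqrt{-t_0}\,(1 - 2\alpha\Delta t) \;+\; \alpha\epsilon\,(1 - \alpha\Delta t).
$$

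The hypothesis $\Delta t > 1/\alpha$, i.e.\ $\alpha\Delta t > 1$, then makes both linear coefficients $1 - 2\alpha\Delta t$ and $1 - \alpha\Delta t$ strictly negative, so $x(1) < 0$ is immediate. This already handles the sub-case $t_1 \geq 0$, where the tipping condition from the definition of $\mt$ reduces to $x(1) < 0$.

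For the remaining sub-case $t_1 < 0$, I need the stronger inequality $x(1) < -\sqrt{-t_1}$. Since $t_1 = t_0 + \Delta t \geq t_0$, we have $\sqrt{-t_1} \leq \sqrt{-t_0}$, and so
$$
   x(1) + \sqrt{-t_1} \;\leq\; \sqrt{-t_0}\,(1 - 2\alpha\Delta t) + \alpha\epsilon\,(1 - \alpha\Delta t) + \sqrt{-t_0} \;=\; (1 - \alpha\Delta t)\bigl(2\sqrt{-t_0} + \alpha\epsilon\bigr),
$$
which is negative under the assumption $\alpha\Delta t > 1$. Combining the two sub-cases yields $x(1) < -\sqrt{\max\{-t_1, 0\}}$, which is exactly the tipping condition at $m=1$.

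The proof is essentially a one-line computation once the factorization of $x(0)^2 + t_0$ is spotted; there is no real obstacle. The only thing worth flagging is the bookkeeping around whether $t_1$ is positive or negative, handled cleanly by the monotonicity bound $\sqrt{-t_1} \leq \sqrt{-t_0}$.
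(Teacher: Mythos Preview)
Your proof is correct and takes essentially the same approach as the paper: both factor $x(0)^2+t_0=\alpha\epsilon\bigl(2\sqrt{-t_0}+\alpha\epsilon\bigr)$, substitute into the recurrence, and use $\alpha\Delta t>1$ together with $\sqrt{-t_1}\le\sqrt{-t_0}$ to force $x(1)<-\sqrt{-t_1}$. Your version is slightly more careful in explicitly treating the sub-case $t_1\ge 0$, which the paper's proof leaves implicit.
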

\begin{proof}
    To begin with, from equation~(\ref{e:maina_convert}) and initial condition $x(0)=\sqrt{-t_0}+\alpha\epsilon$, $x(1)$ can be estimated as follows:
    \begin{eqnarray*}
        x(1)
        &=& x(0)-\frac{\Delta t}{\epsilon}(x(0)+\sqrt{-t_0})(x(0)-\sqrt{-t_0})\\
        &=& \sqrt{-t_0}+\alpha\epsilon-\frac{\Delta t}{\epsilon}(2\sqrt{-t_0}+\alpha\epsilon)(\alpha\epsilon)\\
        &<& \sqrt{-t_0}+\alpha\epsilon-(2\sqrt{-t_0}+\alpha\epsilon)
        \hspace{0.4cm} \big(\text{since $\Delta t>1/\alpha$}\big)\\
        &=& -\sqrt{-t_0}
    \end{eqnarray*}
    Together with the fact that $-\sqrt{-t_0}<-\sqrt{-t_1}$, $x(1)$ is less than $-\sqrt{-t_1}$. 
    Now, by Lemma~\ref{l:earlytipping}, we can conclude that $t_1$ is the tipping time when $\Delta t$ exceeds $1/\alpha$.
    The proof is thus completed.
\end{proof}

\medskip
Next, if $x(1)$ is not the tipping point, then this raises a question: when is the next potential tipping time? 
Suppose that $-\sqrt{t_1}<x(1)<\sqrt{t_1}$. 
Then equation~(\ref{e:maina_convert}) indicates that $x(2) > x(1)$,
and thus that $x(2)$ cannot be a tipping point and so
$x(3)$ could be the next potential tipping point. 
On the other hand, the following lemma provides a sufficient condition for $t_3$ to be the tipping time,
which establishes the assertion (b) of Theorem~\ref{t:main2}.
\begin{lemma}[\bf Tipping at $m=3$]\label{l:tippig_at_t3}
    Let $\{ x(m) \}_{m\in\mathbb{N}}$ be the solution of problem~(\ref{e:main}) and set $\Delta t=C\epsilon^b$.
    Suppose that $b\in(0,1/2)$ and $\alpha>1/(-4t_0)$.
    Then there exists a small $\epsilon_0>0$ such that for $\epsilon\in(0,\epsilon_0)$, the solution $\{x(m)\}$ satisfies 
    \begin{enumerate}[\rm(i)]
        \item 
        $x(m) > 0$ and $(-1)^m(x(m) - \sqrt{-t_m})>0$ for $m=1, 2$; and
        \item 
        $x(m)<-\sqrt{-t_m}$ at $m=3$.
    \end{enumerate}
\end{lemma}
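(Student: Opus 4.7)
The plan is to evaluate $x(1)$, $x(2)$, and $x(3)$ in sequence by applying the recursion in the form (\ref{e:maina_convert}), and at each step to track both the sign and the leading order in $\epsilon$ of the displacement $x(m)-\sqrt{-t_m}$ from the frozen stable branch. The mechanism I expect to drive everything is that the amplification factor $\Delta t/\epsilon = C\epsilon^{b-1}$ diverges as $\epsilon\to 0^+$ (because $b<1$), so successive applications of the recursion amplify the iterates in a controlled way, with the sign of the displacement alternating at $m=0,1,2$ exactly thanks to the hypothesis $\alpha>1/(-4t_0)$.

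First I would compute $x(1)$ explicitly from (\ref{e:maina_convert}) at $m=0$, using $x(0)=\sqrt{-t_0}+\alpha\epsilon$, to get
$$x(1) = \sqrt{-t_0} + \alpha\epsilon - \alpha\,\Delta t\bigl(2\sqrt{-t_0}+\alpha\epsilon\bigr).$$
Positivity $x(1)>0$ for small $\epsilon$ is immediate since $\Delta t = C\epsilon^b\to 0$. For the inequality $x(1)<\sqrt{-t_1}$, I would Taylor-expand $\sqrt{-t_1}=\sqrt{-t_0-\Delta t}$ to first order in $\Delta t$ and show that
$$x(1)-\sqrt{-t_1} = \frac{\Delta t\,\bigl(1-4\alpha(-t_0)\bigr)}{2\sqrt{-t_0}} + \alpha\epsilon + \text{(lower order)}.$$
The first term is strictly negative precisely by $\alpha>1/(-4t_0)$, and since $\Delta t/\epsilon = C\epsilon^{b-1}\to\infty$ it dominates the positive $O(\epsilon)$ correction, yielding $x(1)-\sqrt{-t_1}<0$ with magnitude of order $\Delta t$.

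Second, I would apply (\ref{e:maina_convert}) at $m=1$: since $x(1)+\sqrt{-t_1}\approx 2\sqrt{-t_0}>0$ while $x(1)-\sqrt{-t_1}$ has order $-\Delta t$, the increment
$$x(2)-x(1) = -\frac{\Delta t}{\epsilon}\bigl(x(1)+\sqrt{-t_1}\bigr)\bigl(x(1)-\sqrt{-t_1}\bigr)$$
is positive and of order $(\Delta t)^2/\epsilon = C^2\epsilon^{2b-1}$, which blows up as $\epsilon\to 0^+$ because $2b-1<0$. This immediately gives $x(2)>x(1)>0$, and, since $(\Delta t)^2/\epsilon$ dwarfs $\sqrt{-t_2}\approx\sqrt{-t_0}$, also $x(2)>\sqrt{-t_2}$, with both $x(2)\pm\sqrt{-t_2}$ of order $(\Delta t)^2/\epsilon$. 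Applying (\ref{e:maina_convert}) one more time at $m=2$ then produces
$$x(3)-x(2) \asymp -\frac{\Delta t}{\epsilon}\cdot\left(\frac{(\Delta t)^2}{\epsilon}\right)^2 = -C^5\,\epsilon^{5b-3},$$
and since $5b-3<0$ this diverges to $-\infty$ far faster than $x(2)$ grows. Hence $x(3)$ is a large negative number, well below $-\sqrt{-t_3}=O(1)$, proving (ii).

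The main obstacle is the sign analysis at the first step: one must simultaneously track two small parameters ($\epsilon$ and $\Delta t = C\epsilon^b$) and recognize that the $\Delta t$-order cancellation, rather than the $\epsilon$-order initial displacement, is the decisive contribution. This is exactly where the quantitative threshold $\alpha>1/(-4t_0)$ enters: it makes the coefficient $1-4\alpha(-t_0)$ strictly negative, so the Euler update overshoots the stable branch by a definite amount. Once this sign and its magnitude are pinned down, the second and third steps are routine order-of-magnitude estimates through the quadratic map, and $\epsilon_0$ is simply chosen small enough to absorb all subleading terms into the strict inequalities required.
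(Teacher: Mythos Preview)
Your proposal is correct and follows the same route as the paper in Step~1 (the explicit computation of $x(1)$, the Taylor expansion of $\sqrt{-t_1}$, and the identification of the coefficient $\frac{1-4\alpha(-t_0)}{2\sqrt{-t_0}}$ as the decisive quantity), but diverges from the paper in Steps~2 and~3. The paper does not track the asymptotic orders $\epsilon^{2b-1}$ and $\epsilon^{5b-3}$ at all; instead it isolates a single scalar inequality
\[
\frac{\Delta t}{\epsilon} > \frac{K}{\sqrt{-t_1}-x(1)}, \qquad K=\tfrac{1+\sqrt{5}}{2},
\]
verifies it once (this amounts to your observation that $(\Delta t)^2/\epsilon$ dominates $\Delta t$), and then shows by direct algebraic rearrangement of (\ref{e:maina_convert}) that this inequality forces first $x(2)>\sqrt{-t_2}$ and then, via positivity of an auxiliary quadratic $p(\Delta t/\epsilon)$, the inequality $x(3)<-\sqrt{-t_3}$. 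Your asymptotic approach is more transparent about the underlying mechanism (each iterate is amplified by a definite negative power of $\epsilon$, and $5b-3<2b-1<0$ when $b<1/2$), and it makes the role of the hypothesis $b<1/2$ visible at the second step. The paper's approach trades that transparency for exact inequalities that do not invoke ``lower order terms'' and therefore need no separate justification that the remainders are controlled; the price is the appearance of the golden-ratio constant $K$, which your argument shows is not intrinsic to the problem.
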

\begin{proof}
    The proof consists of three steps.

    {\it Step 1: There exists a $\epsilon_2\in(0,1)$ such that $x(1) > 0$ and $x(1) - \sqrt{-t_1}<0$ for all $\epsilon \in (0, \epsilon_2)$}.\\
    From equation~(\ref{e:maina_convert}) and initial condition $x(0)=\sqrt{-t_0}+\alpha\epsilon$ , $x(1)>0$ is equivalent to
    \begin{eqnarray*}
        \frac{\Delta t}{\epsilon}
        &<& \frac{x(0)}{x(0)+\sqrt{-t_0}}\cdot \frac{1}{x(0)-\sqrt{-t_0}}\\
        &=&  \big( \frac{1}{\alpha\epsilon} \big)\frac{\sqrt{-t_0}}{2\sqrt{-t_0}+\alpha\epsilon}.
    \end{eqnarray*}
    Since $\Delta t=C\epsilon^b$ with $b\in(0,1/2)$, there exists a constant $\epsilon_1 \in (0, 1)$ such that the above inequality holds for $\epsilon\in(0,\epsilon_1)$.
    Now, using equation~(\ref{e:maina_convert}) and initial condition $x(0)=\sqrt{-t_0}+\alpha\epsilon$, and applying Taylor's expansion to $\sqrt{-t_1}$ at $-t_0$, $\sqrt{-t_1}-x(1)$ can be estimated as follows:
    \begin{equation}\label{e:3StepTipping1}
        \begin{split}
            \sqrt{-t_1}-x(1)
            &=\sqrt{-t_1}-\big( \sqrt{-t_0}+\alpha\epsilon-\frac{\Delta t}{\epsilon}(2\sqrt{-t_0}+\alpha\epsilon)\alpha\epsilon \big)
            \quad \mbox{(by Eq.~(\ref{e:maina_convert}))}\\
            &= \big( \sqrt{-t_0}-\frac{\Delta t}{2\sqrt{-t_0}}+O(\Delta t^2) \big)-\big( \sqrt{-t_0}+\alpha\epsilon-\frac{\Delta t}{\epsilon}(2\sqrt{-t_0}+\alpha\epsilon)\alpha\epsilon \big)\\
            &= \big( 2\alpha\sqrt{-t_0}-\frac{1}{2\sqrt{-t_0}} \big)\Delta t-\alpha\epsilon+\alpha^2\Delta t\epsilon+O(\Delta t^2)\\
            &= \big( 2\alpha\sqrt{-t_0}-\frac{1}{2\sqrt{-t_0}} \big)C\epsilon^b- \alpha\epsilon+\alpha^2C\epsilon^{1+b}+O(\epsilon^{2b}).
        \end{split}
    \end{equation}
    Since $\alpha>1/(-4t_0)$, the coefficient of the term $\epsilon^b$ is positive. 
    Together with $0<b<1/2$, the right hand side of equation~(\ref{e:3StepTipping1}) is positive 
    for $\epsilon\in(0,\epsilon_2)$ and for some $\epsilon_2 \in (0, \epsilon_1)$,
    and thus $\sqrt{-t_1}-x(1)>0$ for $\epsilon\in(0,\epsilon_2)$.
    
    {\it Step 2: There exists a $\epsilon_0\in(0,\epsilon_2)$ such that $x(2) > 0$ and $x(2) - \sqrt{-t_2}>0$ for all $\epsilon \in (0, \epsilon_0)$}.\\
     To begin with, we claim that there exists a small $\epsilon_0 \in (0,1)$
    \begin{equation}\label{ineq:3StepTipping1}
        \frac{\Delta t}{\epsilon}>\frac{K}{\sqrt{-t_1}-x(1)} \quad \forall \epsilon \in (0, \epsilon_0)
    \end{equation}
    where $K:=(1+\sqrt{5})/2$. 
   Indeed, with the use of $\Delta t=C\epsilon^b$ and equation~(\ref{e:3StepTipping1}),  
   inequality~(\ref{ineq:3StepTipping1}) is equivalent to
    \begin{equation*}
        C^{-1}\epsilon^{1-b}<\frac{1}{K}\big\{ \big( 2\alpha\sqrt{-t_0}-\frac{1}{2\sqrt{-t_0}} \big)C\epsilon^b-\alpha\epsilon+\alpha^2C\epsilon^{1+b}+O(\epsilon^{2b}) \big\},
    \end{equation*}
    which, after a rearrangement, 
   is equivalent to the following equation
    \begin{equation}\label{ineq:3StepTipping2}
        \frac{C^2}{K}\big( 2\alpha\sqrt{-t_0}-\frac{1}{2\sqrt{-t_0}} \big)\epsilon^{2b-1}-\alpha C \epsilon^b+\alpha^2C^2\epsilon^{2b}+O(\epsilon^{3b-1})>1.
    \end{equation}
    Recall that $\alpha>1/(-4t_0)$ and $2b-1<0$. 
    Then inequality~(\ref{ineq:3StepTipping2}) holds $\epsilon\in(0,\epsilon_0)$ and for some $\epsilon_0 \in (0, \epsilon_2)$, and thus the assertion of the claim is established.

  Now, we are ready to establish the assertion of this step.
  Indeed, since $x(1)>0$ and $\sqrt{-t_2} < \sqrt{-t_1}$, we have    
    $\big( \sqrt{-t_2}-x(1) \big)/\big( \sqrt{-t_1}+x(1) \big) < 1 < K$.
  It then follows from (\ref{ineq:3StepTipping1}) that
   \begin{equation*}
        \frac{\Delta t}{\epsilon} 
        >  \frac{K}{\sqrt{-t_1}-x(1)}
        > \frac{\sqrt{-t_2}-x(1)}{\sqrt{-t_1}+x(1)}\cdot\frac{1}{\sqrt{-t_1}-x(1)} \quad \forall \epsilon \in (0, \epsilon_0),
    \end{equation*}
    and so
     \begin{equation*}
        \frac{\Delta t}{\epsilon} 
        > \frac{\sqrt{-t_2}-x(1)}{\sqrt{-t_1}+x(1)}\cdot\frac{1}{\sqrt{-t_1}-x(1)} \quad \forall \epsilon \in (0, \epsilon_0).
    \end{equation*}  
    Note that $\sqrt{-t_1}-x(1) > 0$ for $\epsilon \in (0, \epsilon_0)$ due to Step 1.
    Then for each $\epsilon \in (0, \epsilon_0)$,
    the inequality $x(2)>\sqrt{-t_2}$ is a consequence of a rearrangement of the above inequality and equation~(\ref{e:maina_convert}).

    {\it Step 3: The inequality $x(3) < -\sqrt{-t_3}$ holds for all $\epsilon\in(0,\epsilon_0)$}.\\
 To proceed, 
 let $X^+$ be the unique positive root of the quadratic polynomial
 $$
    p(X) = (-t_1-x(1)^2)X^2-(\sqrt{-t_1}-x(1))X-1.
 $$
 We claim that $p(\Delta t/\epsilon) > 0$ for $\epsilon\in(0,\epsilon_0)$.
  %
    To see this, 
    from the exact expression of $X^+$ we have 
    \begin{eqnarray*}
        X^+
        &=& \frac{1}{2(-t_1-x(1)^2)}\big( (\sqrt{-t_1}-x(1))+\sqrt{(\sqrt{-t_1}-x(1))^2+4(-t_1-x(1)^2)} \big)\\
        &<& \frac{1}{2(-t_1-x(1)^2)}\big( (1+\sqrt{5})(\sqrt{-t_1}+x(1)) \big) \hspace{0.4cm} \big(\text{since $0<x(1)$}\big)\\
        &=& \frac{1+\sqrt{5}}{2} \cdot \frac{1}{\sqrt{-t_1}-x(1)}\\
        &=& \frac{K}{\sqrt{-t_1}-x(1)}.
    \end{eqnarray*}
   Together with inequality~(\ref{ineq:3StepTipping1}),
   we have that $\Delta t/\epsilon > X^+$ for $\epsilon\in(0,\epsilon_0)$.
   Note that the coefficient of the quadratic term of $p(X)$ is 
   positive due to the fact that ${\sqrt{-t_1}-x(1)}>0$ for $\epsilon\in(0,\epsilon_0)$. 
   Thus $p(X)$ is increasing for $X > X^+$.
   Taken together, we can thus deduce that 
   $p(\Delta t/\epsilon) > p(X^+) = 0$ for $\epsilon\in(0,\epsilon_0)$.
   
    
    Next, a rearrangement of the inequality $p(\Delta t/\epsilon) > 0$
    gives 
    \begin{equation*}
        \frac{\epsilon}{\Delta t}<-(\sqrt{-t_1}-x(1))+\frac{\Delta t}{\epsilon}(\sqrt{-t_1}+x(1))(\sqrt{-t_1}-x(1)),
    \end{equation*}
    which, after taking the reciprocal of both sides, gives 
    \begin{eqnarray*}
        \frac{\Delta t}{\epsilon}
        &>& \frac{1}{(\sqrt{-t_1}-\sqrt{-t_2})-(\sqrt{-t_1}-x(1))+\frac{\Delta t}{\epsilon}(\sqrt{-t_1}+x(1))(\sqrt{-t_1}-x(1))}\\
        & &\text{(since $\sqrt{-t_1}>\sqrt{-t_2}$)}\\
        &=& \frac{1}{x(1)+\frac{\Delta t}{\epsilon}(\sqrt{-t_1}+x(1))(\sqrt{-t_1}-x(1))-\sqrt{-t_2}}\\
        &=& \frac{1}{x(2)-\sqrt{-t_2}} \hspace{0.4 cm} \text{(by equation~(\ref{e:maina_convert}))}\\
        &>& \frac{x(2)+\sqrt{-t_3}}{x(2)+\sqrt{-t_2}}\cdot\frac{1}{x(2)-\sqrt{-t_2}}. \hspace{0.4cm} \text{(since $\sqrt{-t_2}>\sqrt{-t_3}$ and $x(2)-\sqrt{-t_2}>0$)}
    \end{eqnarray*}
    In conclusion, we have
    $$
     \frac{\Delta t}{\epsilon} > \frac{x(2)+\sqrt{-t_3}}{x(2)+\sqrt{-t_2}}\cdot\frac{1}{x(2)-\sqrt{-t_2}} \hspace{0.4cm} \quad \forall \epsilon \in (0, \epsilon_0). 
    $$
    Then for each $\epsilon \in (0, \epsilon_0)$,
    the inequality $x(3)<-\sqrt{-t_3}$ is a consequence of a rearrangement of the above inequality and equation~(\ref{e:maina_convert}).
    The proof is thus completed.
\end{proof}




\subsection{Classification of solutions} 

Based on Theorem \ref{t:main1} and \ref{t:main2}, 
the solution $\{x(m)\}_{m\in\bN}$ of problem~(\ref{e:main}) can be classified into three types based on their dynamical behavior:

\begin{enumerate}[(I).]
    \item $x(m)$ stays above the stable manifold and has a bifurcation delay. In other words, $x(m)>\sqrt{-t_m}$ for $m < \mt$ and the tipping time $t_{\mt}$ is positive.
    \item 
    $x(m)$ oscillates around the stable manifold before the tipping time and has a bifurcation delay. 
    Precisely, $x(m) > 0$ and $(-1)^m(x(m) - \sqrt{-t_m})>0$ for $m < \mt$
    and the tipping time $t_{\mt}$ is positive.
    \item
    $x(m)$ oscillates around the stable manifold before the tipping time and the tipping time $t_{\mt}$ is negative.
    Precisely, $x(m) > 0$ and $(-1)^m(x(m) - \sqrt{-t_m})>0$ for $m < \mt$
    and the tipping time $t_{\mt}$ is negative.
\end{enumerate}

This raises a question: when the ratio between $\Delta t$ and $\epsilon$ increases, how does the solution $\{x(m)\}_{m\in\bN}$ appear to change?  
To study this question, we solve the problem~(\ref{e:tipping}), that is,
the equation~(\ref{e:maina}) with the initial condition
$
t_0 = -1, \quad x_0=\sqrt{-t_0}+\epsilon
$
for various pairs of $(\epsilon,\Delta t)\in(0,0.5]\times(0,0.5]$. Then, for each pair 
$(\epsilon,\Delta t)$, we track the trajectory of the solution $\{x(m)\}_{m\in\bN}$ from $t_0$ to the tipping time $t_{\mt}$.
The results are summarized in Figure~\ref{fig:Tipping_Type} and Figure~\ref{fig:Tipping_Type3}.
\begin{figure}[b!]
    \centering
    \begin{subfigure}[b]{0.73\textwidth}
        \centering
        \includegraphics[width=\textwidth]{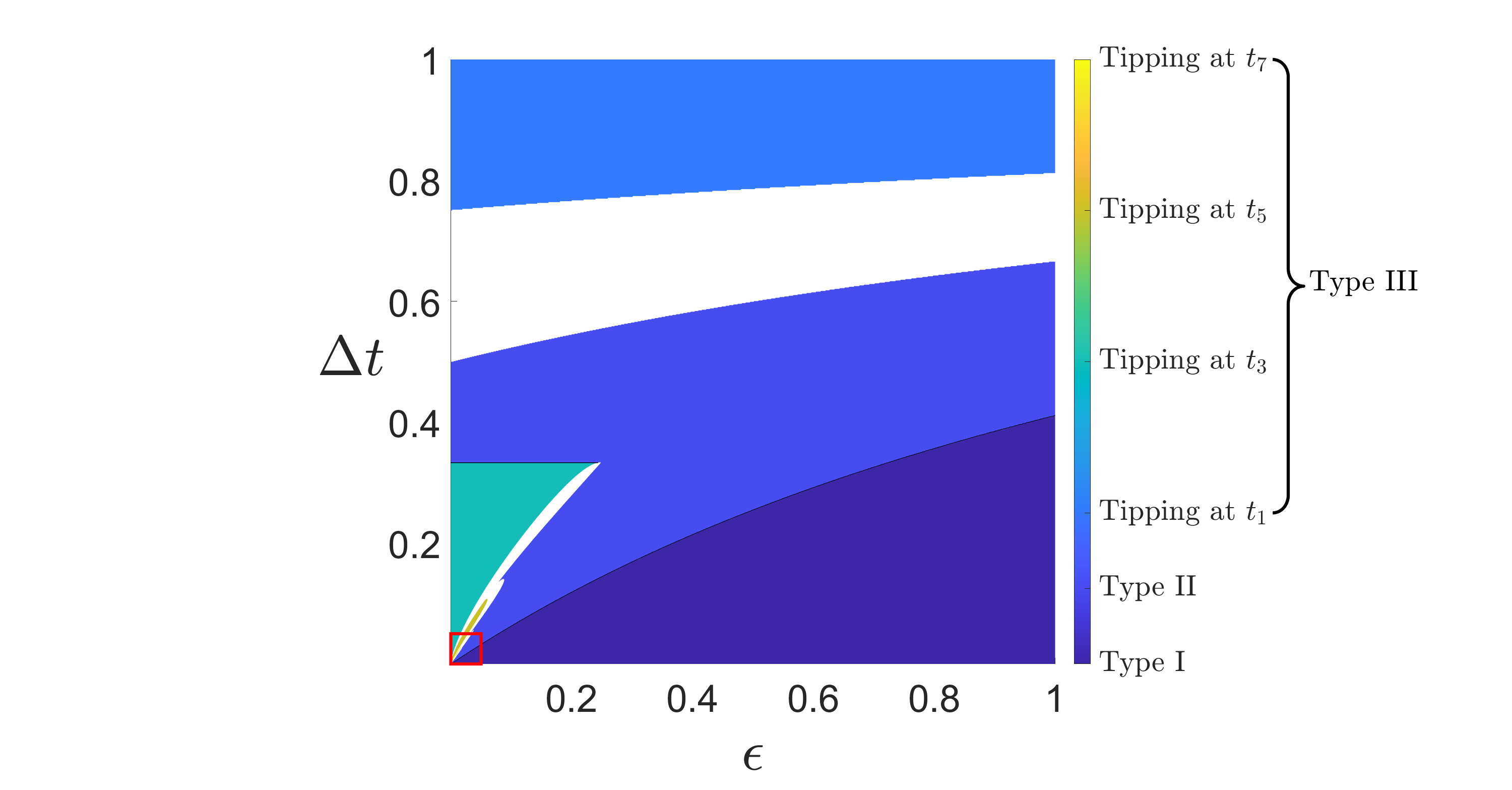}
        \caption{}
    \end{subfigure}
    \hfill
    \begin{subfigure}[b]{0.73\textwidth}
        \centering
        \includegraphics[width=\textwidth]{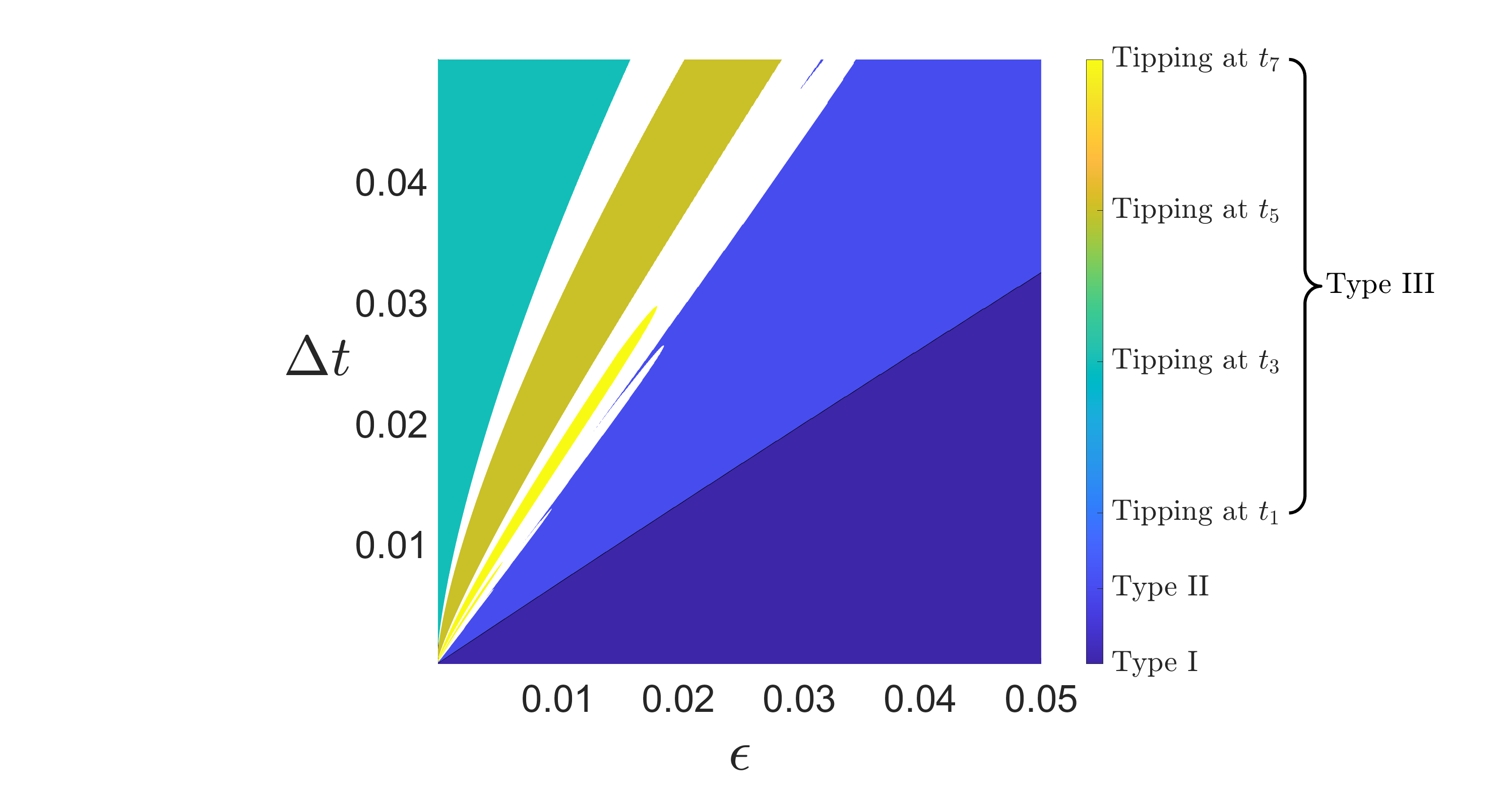}
        \caption{}
    \end{subfigure}
    \caption{\label{fig:Tipping_Type}
    Panel (a): The parameter regions I, II, and III in the $(\epsilon, \Delta t)$-plane corresponding to solution types (I), (II), and (III), respectively  (see texts).
    Panel (b): The magnification of the red square region in Panel (a).
    }
\end{figure}
\begin{figure}[ht!]
    \centering
    \includegraphics[width=\textwidth]{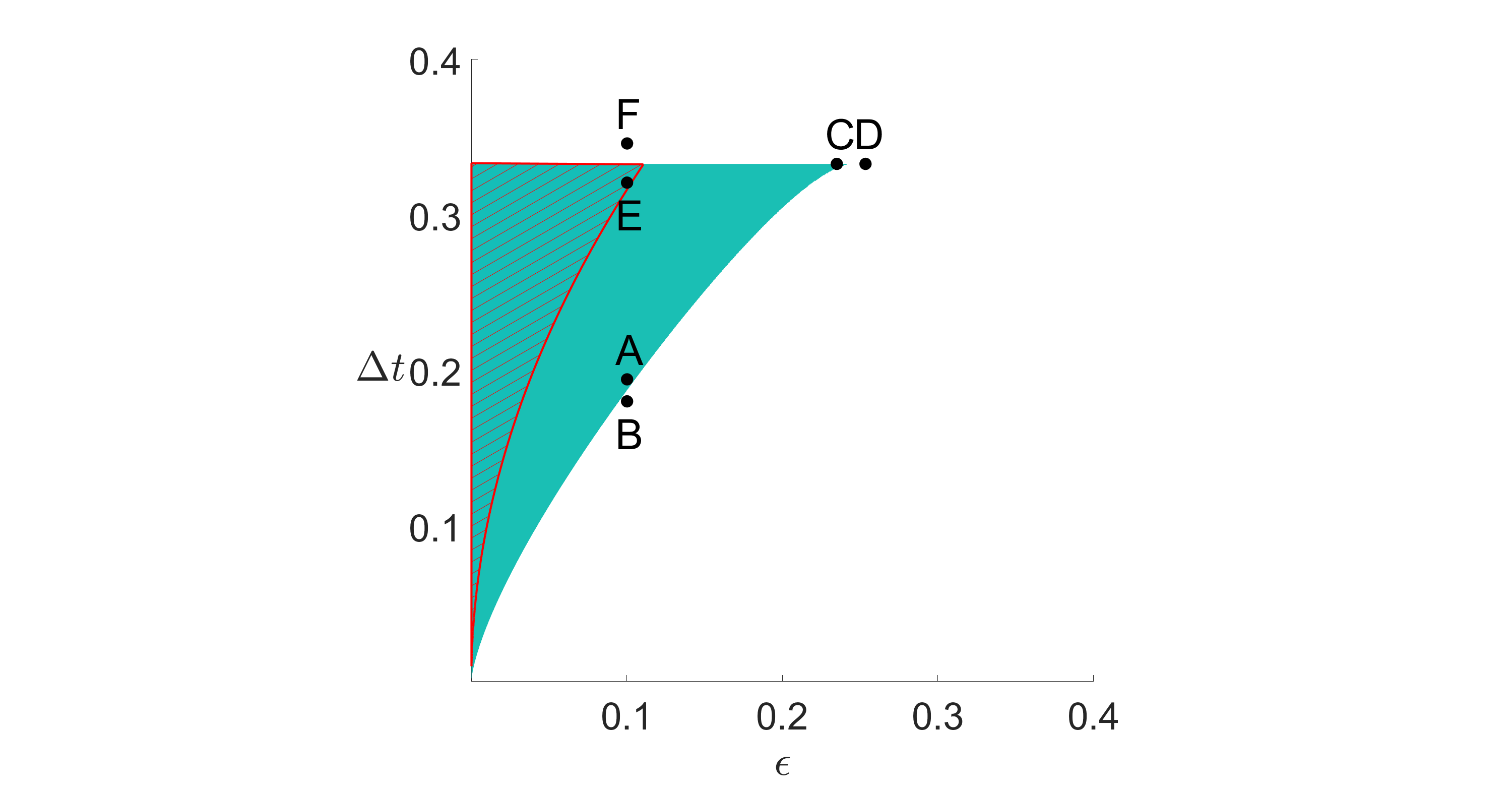}
    \caption{\label{fig:Tipping_Type3}
     The magnification of the green-blue region of Figure~\ref{fig:Tipping_Type}(a)
     corresponding to type (III) solutions with negative tipping time $t_{\mt}$ and $\mt=3$.
     The red hatched region corresponds to $\Delta t = \epsilon^b,~b\in(0,1/2)$, predicted by Lemma~\ref{l:tippig_at_t3}.}
\end{figure}
\begin{figure}[ht!]
    \centering
    \includegraphics[width=0.49\textwidth]{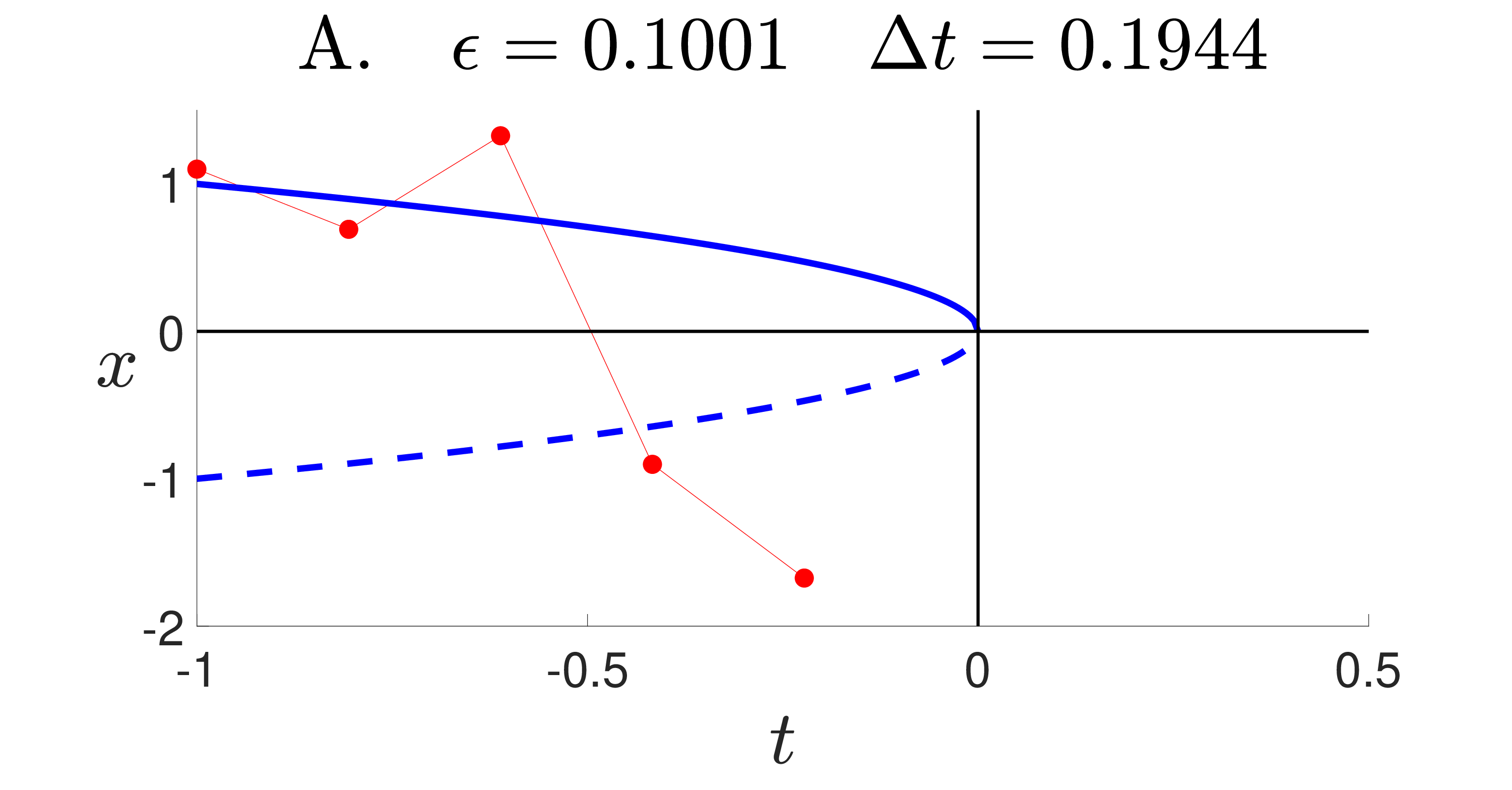}
    \includegraphics[width=0.49\textwidth]{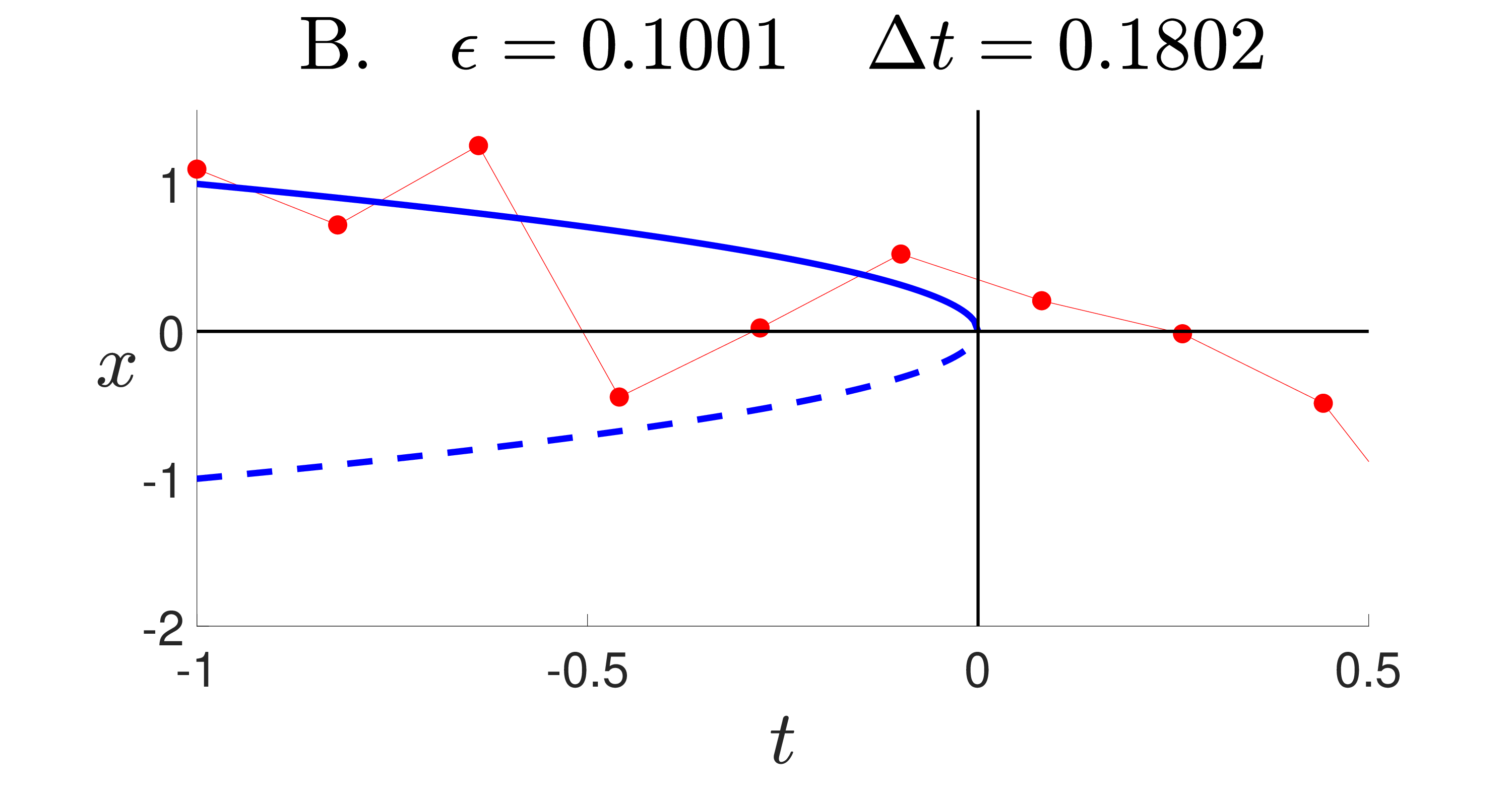}
    \includegraphics[width=0.49\textwidth]{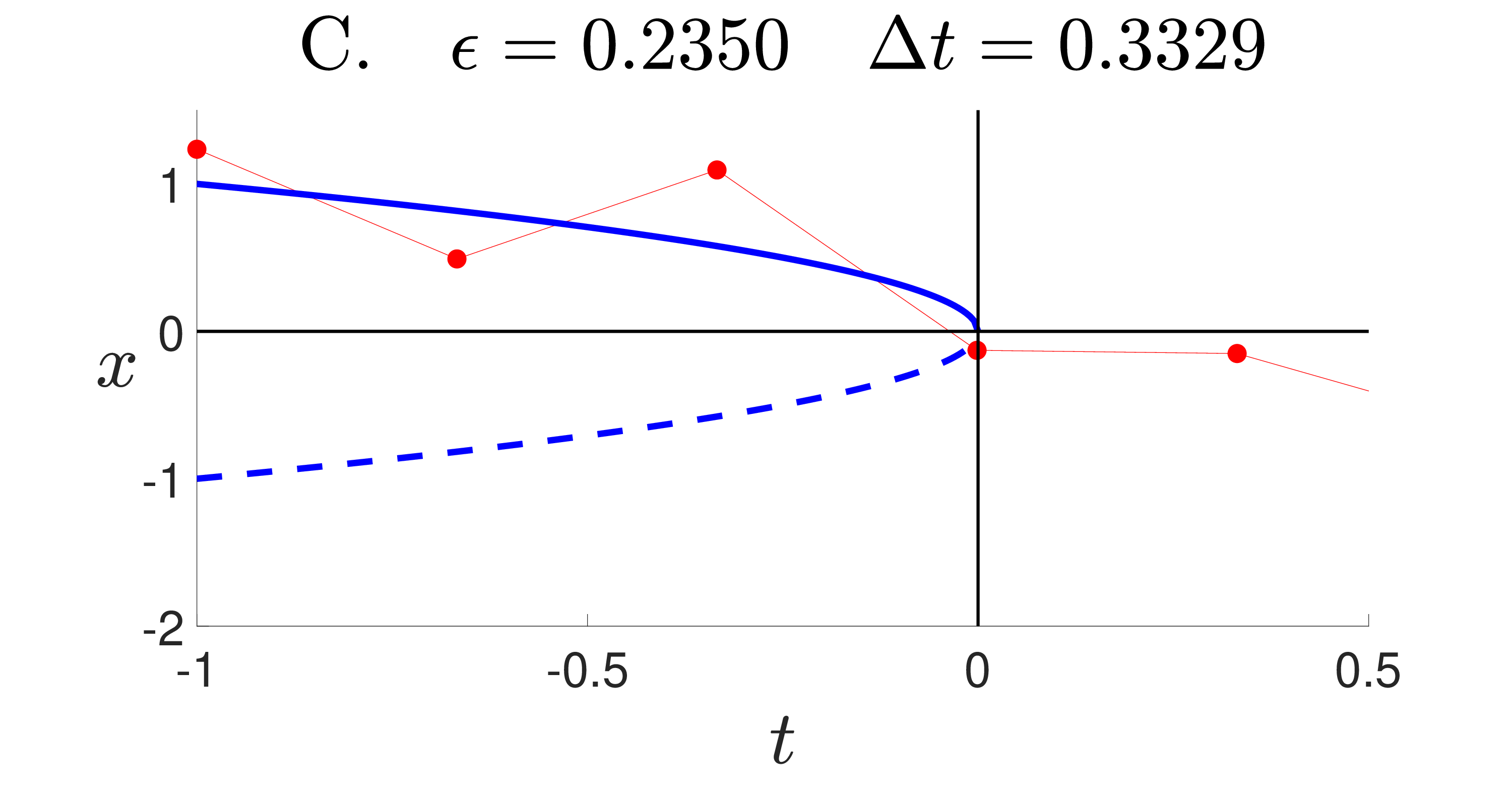}
    \includegraphics[width=0.49\textwidth]{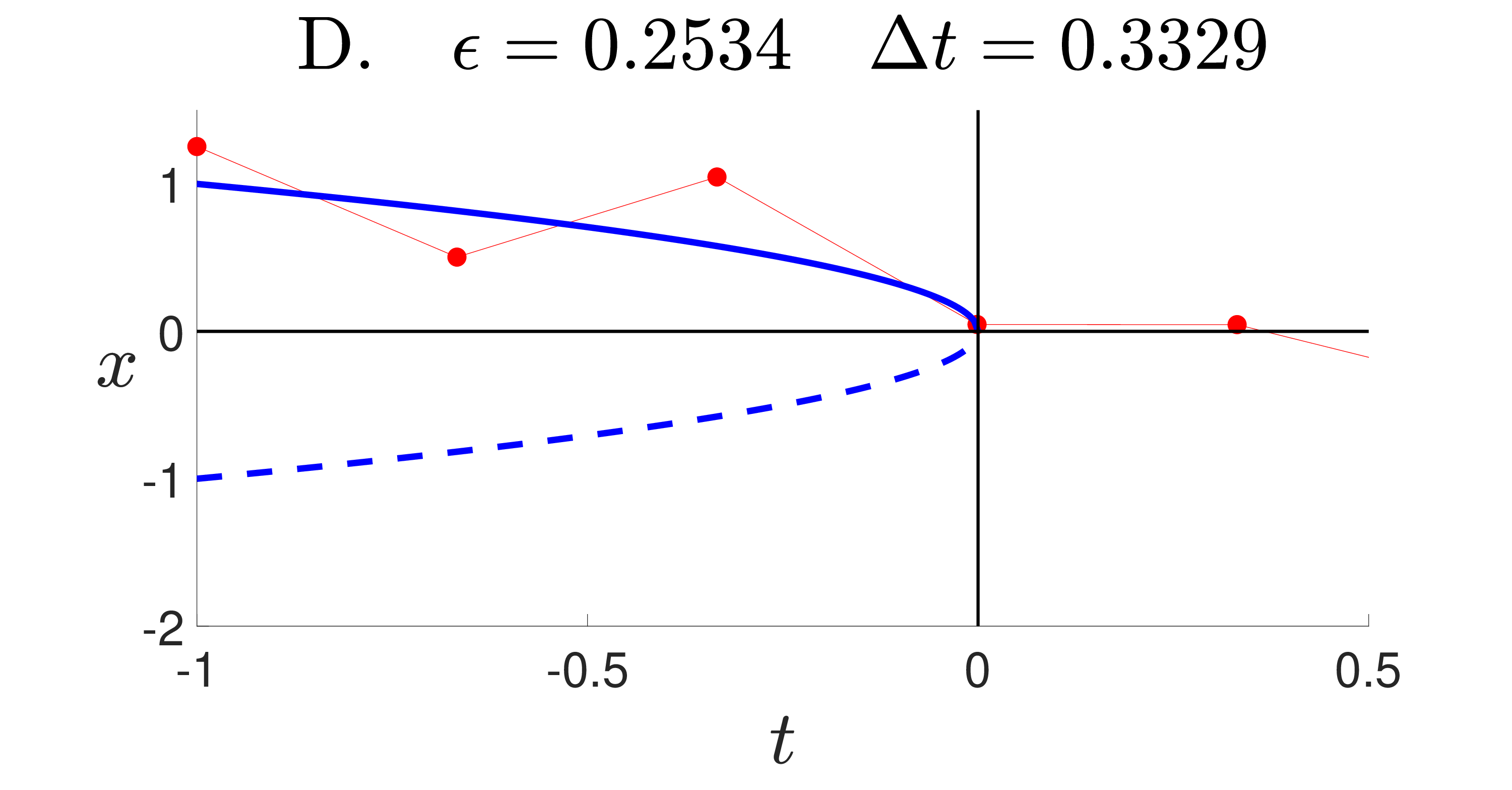}
    \includegraphics[width=0.49\textwidth]{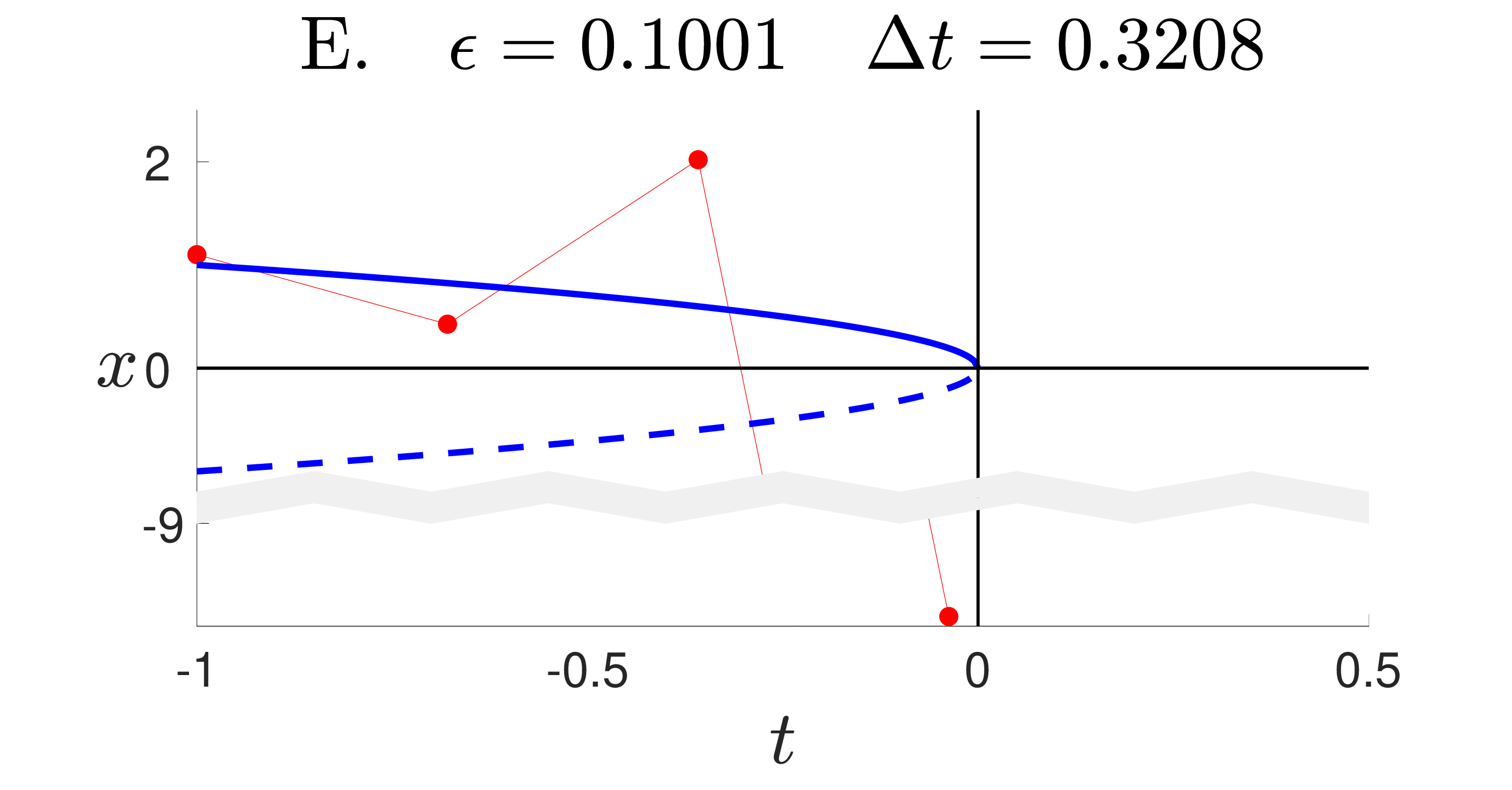}
    \includegraphics[width=0.49\textwidth]{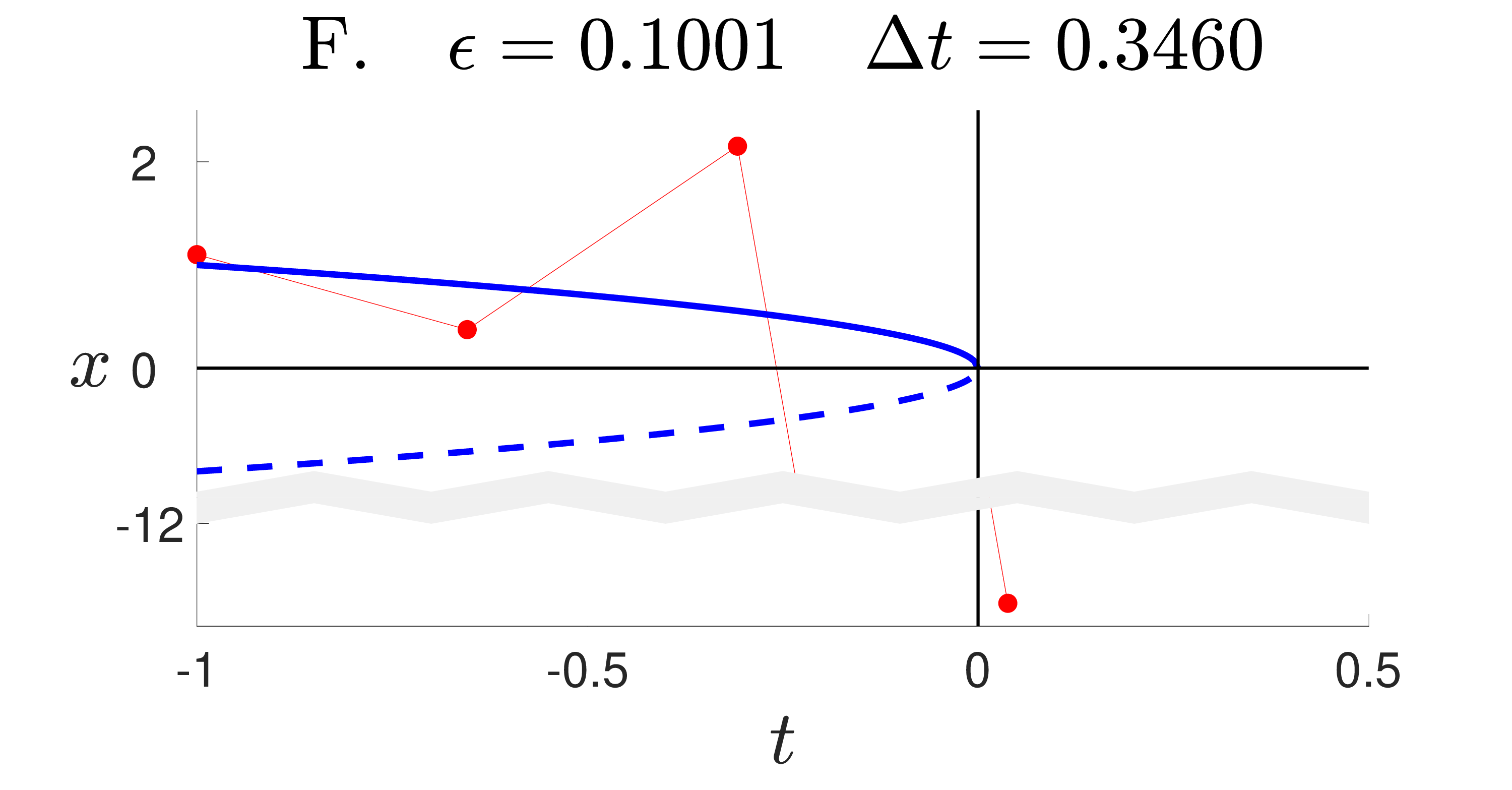}
    \caption{\label{fig:ratio_boundary}
    The time courses of solutions to problem~(\ref{e:main}) corresponding to the points A, B, C, D, E, and F, marked in the Figure~\ref{fig:Tipping_Type3}.
    }
\end{figure}

\subsection{Boundary of the parameter region with negative tipping $t_m$}

Now, let $\Omega_m$ be the region in the $(\epsilon, \Delta t)$-plane for which the corresponding solution of the problem~(\ref{e:tipping}) has negative tipping time $t_{\mt}$ and $\mt=m$.  
We would like to investigate the boundary of the region $\Omega_m$.
The following lemma gives the upper bound for the $\Delta t$-component of the region $\Omega_m$ when the tipping time is negative.
\begin{lemma}[\bf Upper bound of $\Delta t$ with $t_m<0$] \label{l:upperbound_tip}
    Let $\{ x(m) \}_{m\in\mathbb{N}}$ be the solution of problem~(\ref{e:main}) with initial condition $t_0=-1$. Suppose that $x(M)$ is the tipping point for some $t_M<0$. Then the time mesh size $\Delta t<1/M$.
\end{lemma}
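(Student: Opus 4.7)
The plan is to observe that this bound is essentially a direct arithmetic consequence of the definition of $t_m$ and the hypothesis $t_M<0$; the discrete dynamics of $x(m)$ play no role at all. First I would recall from the setup that $t_m = t_0 + m\Delta t$ for every $m\in\bN\cup\{0\}$, so with the stated initial condition $t_0=-1$ we have
\begin{equation*}
    t_M \;=\; -1 + M\,\Delta t.
\end{equation*}
The hypothesis $t_M<0$ then immediately yields $M\,\Delta t < 1$.

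To convert this into the claimed inequality $\Delta t<1/M$, I need $M\ge 1$. This follows from the definition of the tipping time: the initial value $x(0)=\sqrt{-t_0}+\alpha\epsilon$ satisfies $x(0)>0\ge -\sqrt{\max\{-t_0,0\}}$, so $m=0$ cannot belong to the set defining $\mt$. Hence the tipping index $M=\mt$ is at least $1$, and dividing the inequality $M\,\Delta t<1$ by $M$ gives $\Delta t<1/M$.

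Since the entire argument is this short chain of substitutions, there is no real obstacle; the only thing one must be careful about is ruling out $M=0$, which is handled by the positivity of the initial condition relative to the tipping threshold. Thus the lemma is essentially a restatement of the fact that, starting at $t_0=-1$, the time variable $t_m$ remains negative only for as long as $m\Delta t<1$, and so any index $M$ at which tipping occurs while $t_M$ is still negative must satisfy $M<1/\Delta t$.
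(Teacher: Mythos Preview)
Your proof is correct and follows essentially the same approach as the paper: both reduce immediately to the arithmetic identity $t_M=-1+M\Delta t$ and the hypothesis $t_M<0$. The paper phrases it as a one-line contradiction (assume $\Delta t\ge 1/M$ and derive $t_M\ge 0$), while you argue directly and are slightly more careful in explicitly ruling out $M=0$; the underlying argument is identical.
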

\begin{proof}
    This can be proved by contradiction. Suppose that $\Delta t\geq 1/M$. Then the tipping time
    \begin{equation*}
        t_M=t_0+M\Delta t \geq -1+M\frac{1}{M}=0,
    \end{equation*}
    which contradicts to $t_M<0$.
    Therefore, the time mesh size $\Delta t$ is less than $1/M$.
\end{proof}

We remark that
as shown in Figure~\ref{fig:Tipping_Type3},  
the top boundary of the region $\Omega_m$ with negative tipping time at $t_3$ seems to be $\Delta t=1/3$.
This is consistent with that predicted by Lemma~\ref{l:upperbound_tip}.
On the other bound, the upper bound for the top boundary of $\Omega_m$ given in Lemma~\ref{l:upperbound_tip}, 
is overestimated, as indicated by Figure~\ref{fig:Tipping_Type}.

Next, we want to observe the boundaries of $\Omega_m$ in the $(\epsilon, \Delta t)$-plane for each $m$. For convenience, we will denote the top boundary of the $\Omega_m$ region as $\Gamma_m^+$ and the bottom boundary as $\Gamma_m^-$.
Take several points on each boundary $\Gamma_m^\pm$ and plot a log-log graph of $\Delta$ against $\epsilon$ at those boundary points.
As shown in Figure~\ref{fig:Tipping_Type_Bdy}(b), we can observe that the points on the boundaries satisfy the relation $\Delta t = C\epsilon^b$. Furthermore, $\Gamma_m^-$ and $\Gamma_{m+2}^+$ in Figure~\ref{fig:Tipping_Type_Bdy}(b) have similar slopes, meaning that $\Gamma_m^-$ and $\Gamma_{m+2}^+$ have approximately the same exponent with respect to $\epsilon$.
We numerically compute the coefficients $C$ and exponents $b$ corresponding to each boundary $\Gamma_m^\pm$ and display the results in Table~\ref{table:tippingtype_bdy}. The numerical results show that as $m$ increases, the exponent of $\epsilon$ increases towards 1.
We have the following conjecture about the exponents $b_m$ of $\epsilon$ for boundaries $\Gamma_m^-$, $\Gamma_{m+2}^+$:
\begin{equation*}
    b_m=1-\frac{1}{pm+q}
\end{equation*}
where $p,~q$ are constants. For the boundaries in Figure~\ref{fig:Tipping_Type_Bdy}, our estimated values of $p$ and $q$ are approximately 1.3543 and -1.0362, respectively. The estimated and numerical results for $b_m$ 
 are plotted in Figure~\ref{fig:Tipping_Type_Bdy_Estimate}.

\begin{figure}[ht!]
    \centering
    \begin{subfigure}[b]{0.87\textwidth}
        \centering
        \includegraphics[width=\textwidth]{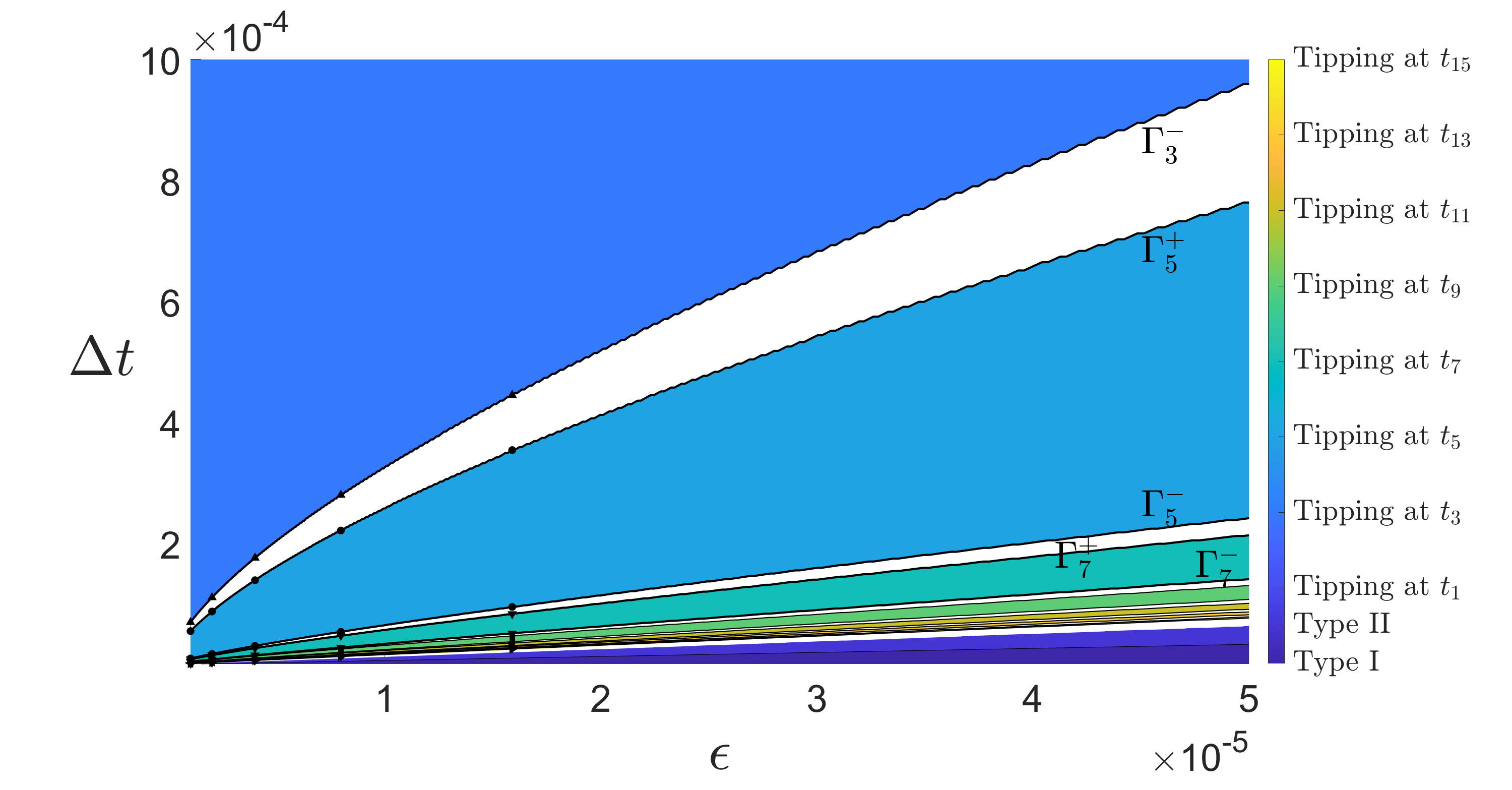}
        \caption{}
    \end{subfigure}
    \hfill
    \begin{subfigure}[b]{0.87\textwidth}
        \centering
        \includegraphics[width=\textwidth]{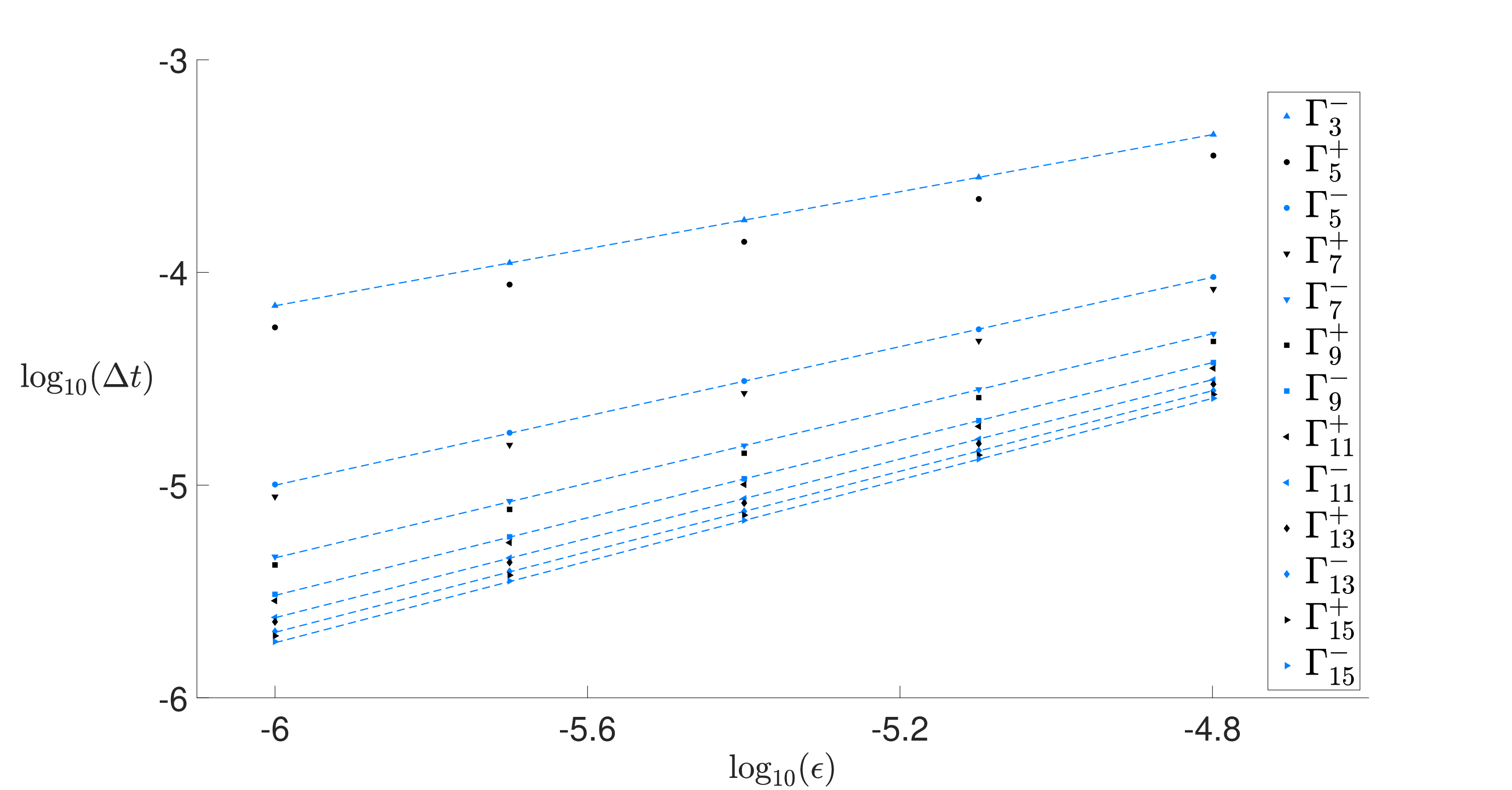}
        \caption{}
    \end{subfigure}
    \caption{\label{fig:Tipping_Type_Bdy}
    Panel (a): The region $\Omega_m$ and its  boundaries $\Gamma_m^\pm$ for $m=1, 3, \ldots, 15$.
    Panel (b): The log-log plot of $\Gamma_m^\pm$.  
    Marker points correspond to those in Panel (a).
    }
\end{figure}

\begin{table}[ht!]
    \centering
    \begin{tabular}{|c|c c c c c c|}
        \hline
        Boundary &  
        $\Gamma_3^-$ & $\Gamma_5^+$ & $\Gamma_5^-$ & $\Gamma_7^+$ & $\Gamma_7^-$ & $\Gamma_9^+$\\
        \hline
        $C$ &
        0.7362 & 0.5995 & 0.7729 & 0.7033 & 0.8439 & 0.7973\\
        $b$ & 
        0.6706 & 0.6730 & 0.8148 & 0.8178 & 0.8780 & 0.8805\\
        \hline
    \end{tabular}
    \\ \hspace*{\fill} \\
    \begin{tabular}{|c|c c c c c c c|}
        \hline
        Boundary &  
        $\Gamma_9^-$ & $\Gamma_{11}^+$ & $\Gamma_{11}^-$ & $\Gamma_{13}^+$ & $\Gamma_{13}^-$ & $\Gamma_{15}^+$ & $\Gamma_{15}^-$\\
        \hline
        C &
        0.9002 & 0.8593 & 0.9401 & 0.9005 & 0.9715 & 0.9103 & 1.0072\\
        b & 
        0.9121 & 0.9135 & 0.9328 & 0.9334 & 0.9466 & 0.9446 & 0.9573\\
        \hline
    \end{tabular}
    \caption{
    The coefficients $C$ and exponents $b$ corresponding to each boundary $\Gamma_m^\pm$, where all boundaries agree with the following function: $\Delta t = C\epsilon^b$.}
    \label{table:tippingtype_bdy}
\end{table}

\begin{figure}[ht!]
    \centering
    \includegraphics[width=0.87\textwidth]{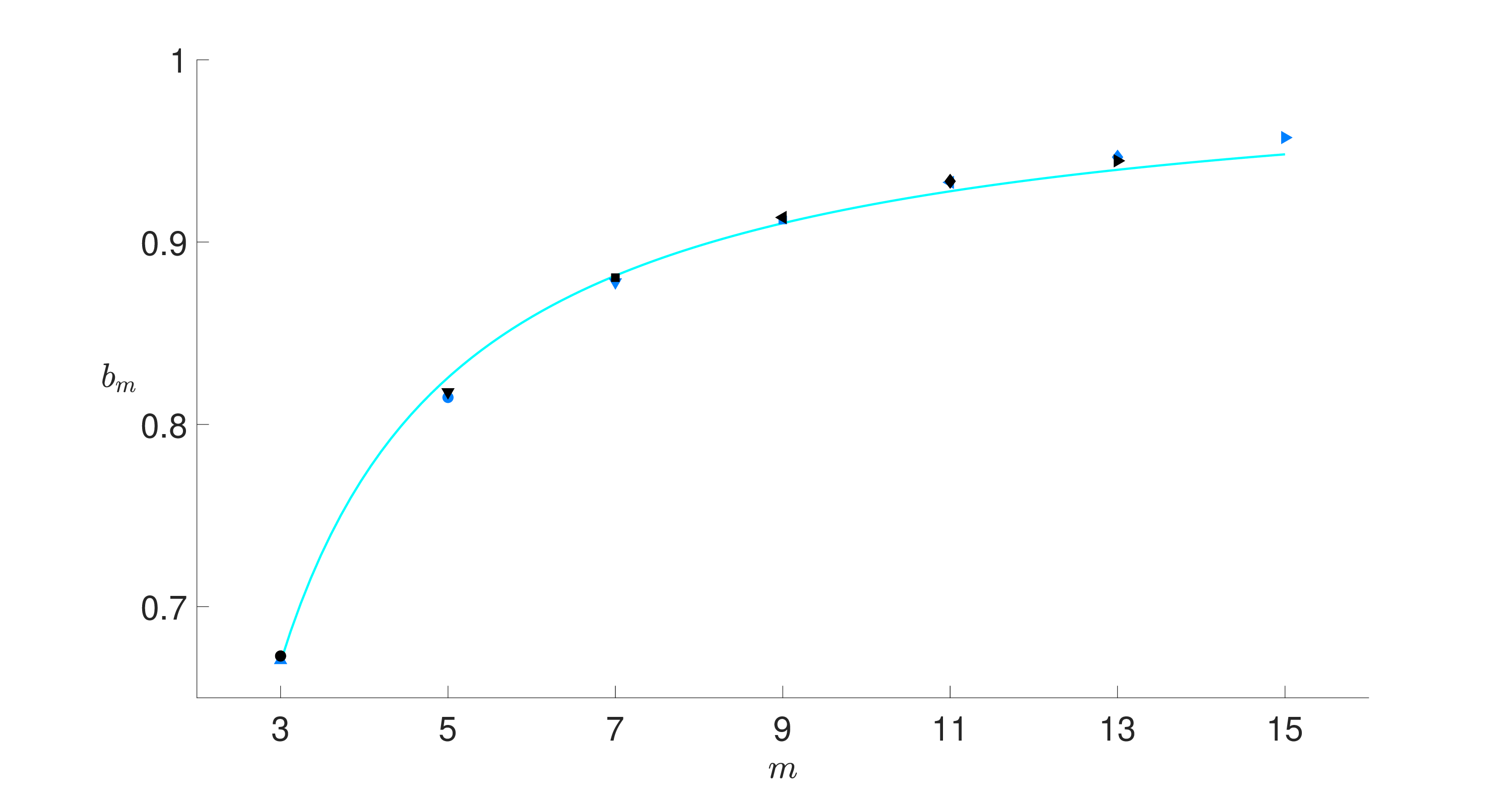}
    \caption{\label{fig:Tipping_Type_Bdy_Estimate}
    For each m, the estimated and numerical results for the exponent $b_m$ of $\epsilon$ in the functions for boundaries $\Gamma_m^-$ and $\Gamma_{m+2}^+$.
    The light blue curve represents the estimated values $1-1/(pm+q)$, while the other markers correspond to the numerical results for each boundary in Figure~\ref{fig:Tipping_Type_Bdy}(b).
    }
\end{figure}
\section{Application:}
\setcounter{equation}{0}
Bistable systems, serving as switches, constitute a fundamental element within electronic and optical devices, such as a memory element in an electronic system that needs to be repetitively switched on and off.
And they can be effectively described by a single dynamical equation~\cite{Jung:1990}.
Here, we consider the following one-dimensional model for a
switched bistable system:
\begin{equation}\label{e:application_temp10}
    \frac{dy}{d\tau} = y-\frac{y^3}{3}-p(\tau),
\end{equation}
where $p$ is the dynamic control parameter given by $p(\tau)=\epsilon\tau$ and $\epsilon\in(0,1)$ is a sweep rate parameter. To understand the dynamic behavior of system~(\ref{e:application_temp10}), we can observe its autonomous counterpart system~(\ref{e:application_temp40}):
\begin{equation}\label{e:application_temp40}
    \frac{dy}{d\tau}=y-\frac{y^3}{3}-p
\end{equation}
Note that for each $p\in\mathbb{R}$, we can solve $y^*-(y^*)^3/3-p=0$ to find equilibrium $y^*$ in system~(\ref{e:application_temp40}), where $y^*>1$ and $y^*<-1$ are stable, while $-1<y^*<1$ is unstable. Moreover, if the initial value is greater than the stable equilibrium above, as $p$ is increased, the solution of system~(\ref{e:application_temp40}) tracks the stable equilibrium above, and then goes to the stable equilibrium below once the bifurcation parameter $p$ is crossed the bifurcation point $2/3$ from below. However, when viewed as a function of $p = \epsilon\tau$, the solutions of system~(\ref{e:application_temp10}) do not immediately switch to the stable equilibrium below when $p$ is increased through the bifurcation point $p=2/3$. Instead, it exhibits a bifurcation delay.

Use forward Euler scheme to discretize the equation~(\ref{e:application_temp10}), we obtain the following discrete dynamical system for $\{ Y(m) \}$:
\begin{equation}\label{e:application_temp20}
    Y(m+1)=Y(m)+\big(Y(m)-\frac{Y(m)^3}{3}\big)\Delta \tau-\epsilon\tau_m\Delta \tau, \quad m\in\mathbb{N}\cup\{ 0 \}
\end{equation}
where the time mesh size $\Delta \tau$ is a positive constant and $\tau_m =\tau_0+m\Delta\tau$. We will observe the dynamical behavior of equation~(\ref{e:application_temp20}). Use the scaling
\begin{equation}\label{e:application_temp30}
    y(m)=Y(m),\quad t_m=\epsilon\tau_m,\quad \Delta t=\epsilon\Delta tau,
\end{equation}
the equation is converted into
\begin{equation*}
    y(m+1)=\frac{\Delta t}{\epsilon}(y(m)-\frac{y(m)^3}{3})+y(m)-\frac{\Delta t}{\epsilon}t_m.
\end{equation*}
Note that the scaling time variable $t$ is exactly the slowly varying bifurcation variable $p$.

Now, consider the initial value problem
\begin{subnumcases} 
{\label{e:application}}
     y(m+1)=\frac{\Delta t}{\epsilon}(y(m)-\frac{y(m)^3}{3})+y(m)-\frac{\Delta t}{\epsilon}t_m, \label{e:applicationa} &\\
     y(0)= y_0+\epsilon, \label{e:applicationb} &
\end{subnumcases}
where $y_0\approx 2.1038$ satisfies $y_0-y_0^3/3-t_0=0$ for $t_0=-1$.


\begin{figure}[ht!]
    \centering
    \begin{subfigure}[b]{0.49\textwidth}
        \centering
        \includegraphics[width=\textwidth]{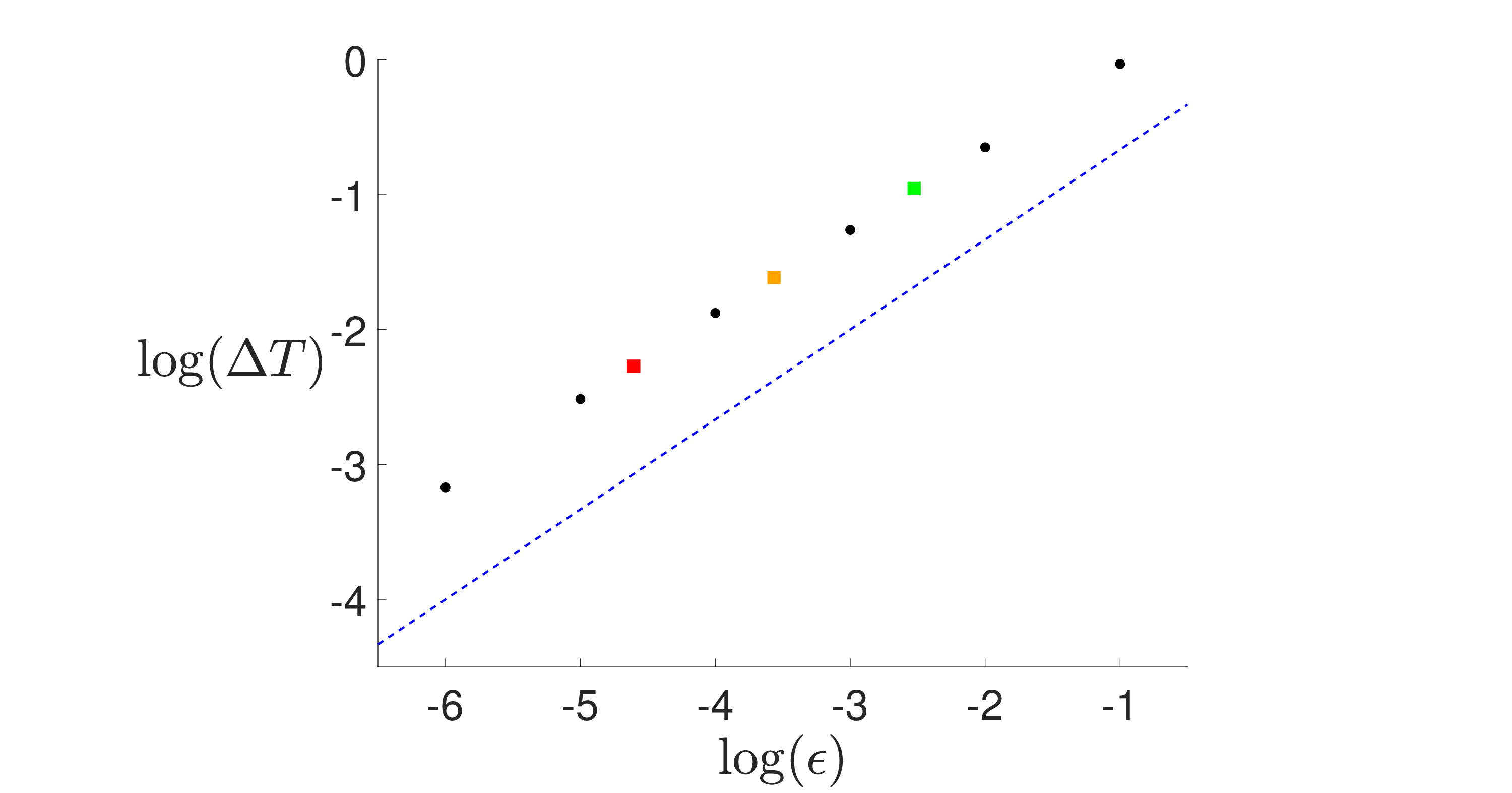}
        \caption{\label{fig:bistable_time_order}}
    \end{subfigure}
    \hfill
    \begin{subfigure}[b]{0.49\textwidth}
        \centering
        \includegraphics[width=\textwidth]{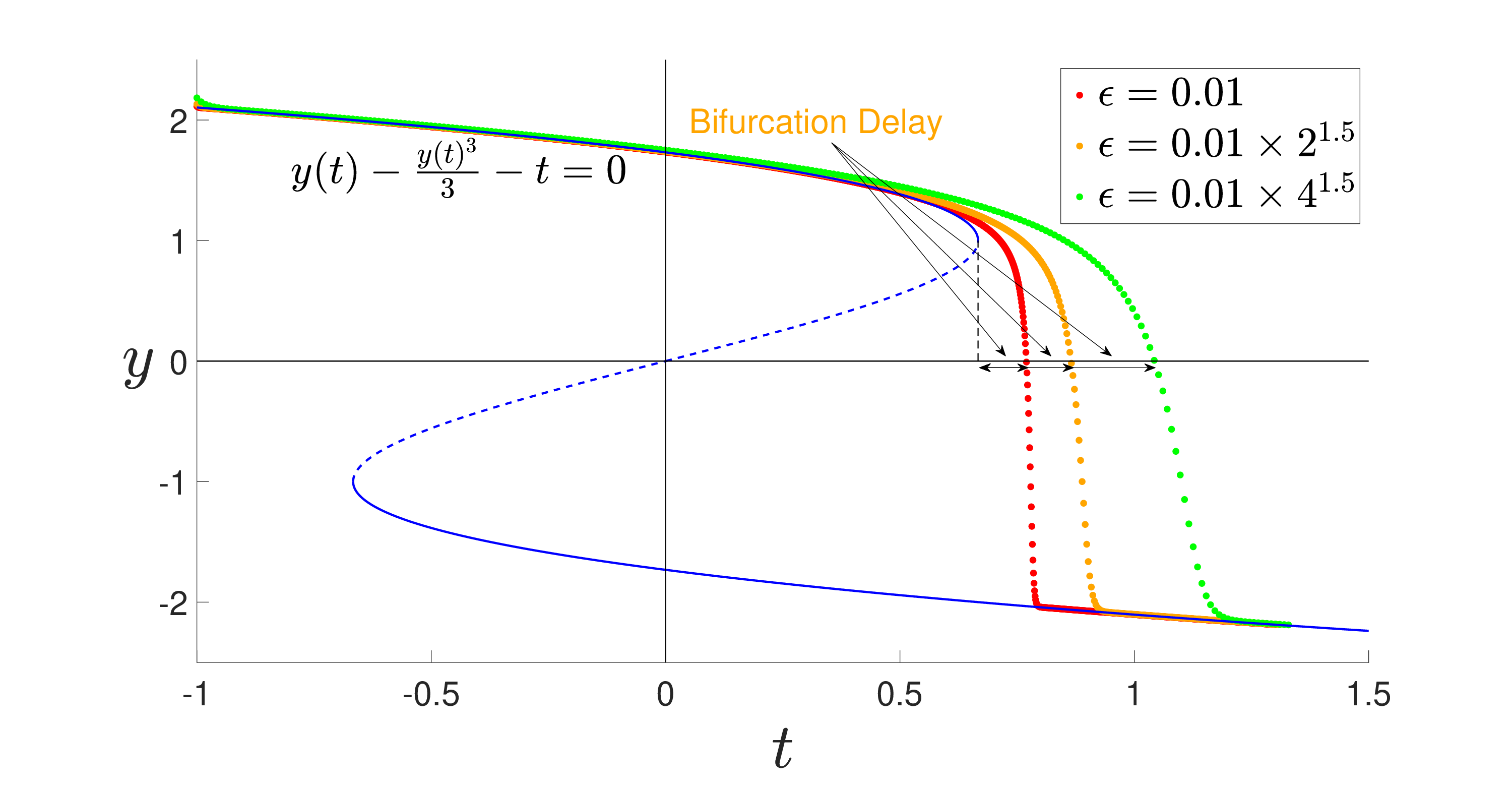}
        \caption{\label{fig:bistable_multi_sol}}
    \end{subfigure}
    \caption{
    (a). The dependence of bifurcation delay time $\Delta T$($=t_{\mt}-2/3$) on the sweep rate parameter $\epsilon$: $\epsilon=0.01$ (red marker), $\epsilon=0.01\times 2^{1.5}$ (orange marker), $\epsilon=0.01\times 4^{1.5}$ (green marker). The blue dashed line is $\log(t)= (2/3) \log(\epsilon)$. 
    (b). Solution profiles corresponding to the red, the orange, and the green markers in panel (a), respectively. 
    Here $\Delta t/\epsilon$ is fixed to be $\ln(2)/6$.
    }
\end{figure}



\newpage


\end{document}